\documentclass[11pt]{article}
\usepackage[margin=1in]{geometry}
\usepackage{amsmath,amssymb,amsthm}
\usepackage{graphicx}
\usepackage{subfig}
\usepackage{float}
\usepackage{hyperref}
\graphicspath{{paper_figures_separate/}}
\newcommand{\email}[1]{\href{mailto:#1}{\nolinkurl{#1}}}

\usepackage{bm}
\usepackage{xcolor}
\usepackage{algorithm}
\usepackage{algpseudocode}
\usepackage{enumitem}
\theoremstyle{plain}
\newtheorem{theorem}{Theorem}[section]
\newtheorem{proposition}{Proposition}[section]
\newtheorem{lemma}{Lemma}[section]
\newtheorem{corollary}{Corollary}[section]
\newtheorem{problem}{Problem}[section]
\newtheorem{Remark}{Remark}[section]
\newtheorem{example}{Example}[section]

\newcommand{\x}{{\bf x}}

\title{Carleman--Picard and time-dimensional reduction for inverse initial-data problems in nonlinear transport with memory
}

\author{%
\begin{minipage}{0.9\textwidth}
\raggedright
{\large\bfseries Navaraj Neupane}\\
{\small
Department of Mathematics and Statistics, University of North Carolina
at Charlotte, Charlotte, NC 28223, USA,\\
Email: \email{nneupan2@charlotte.edu} (corresponding author)}\\[1.2em]
{\large\bfseries Loc Hoang Nguyen}\\
{\small
Department of Mathematics and Statistics, University of North Carolina
at Charlotte, Charlotte, NC 28223, USA,\\
Email: \email{loc.nguyen@charlotte.edu}}
\end{minipage}}
\date{}
\numberwithin{equation}{section}
\begin{document}
\maketitle

\begin{abstract}
We study an inverse initial-data problem for a quasilinear transport
equation with nonlinear and memory effects. The unknown initial state is
reconstructed from time-dependent measurements on the outflow boundary,
with prescribed inflow data. We first apply a Legendre--exponential
time-dimensional reduction to transform the governing equation into a
finite nonlinear system in space. We then develop a
Carleman-weighted and Tikhonov-regularized Picard method. At each
iteration, the nonlinear terms are evaluated using the previous
iterate, leading to a linear minimization problem with a unique
solution. A Carleman estimate for the principal transport operator is
used to prove that, for a sufficiently large Carleman parameter, the
resulting Picard map is contractive on a prescribed admissible set.
Consequently, the method converges from an arbitrary initial guess in
that set. We also establish stability with respect to noisy outflow
data. These analytical results concern the truncated and regularized
reduced problem. Two-dimensional numerical experiments demonstrate
accurate reconstruction of single and multiple inclusions, robustness
with respect to noise, and rapid convergence of the Picard iteration.
\end{abstract}

\noindent\textbf{Keywords:} inverse initial-data problem; quasilinear
transport equation; Volterra memory term; Carleman estimate;
Carleman--Picard method; time-dimensional reduction.

\medskip
\noindent\textbf{MSC 2020:} Primary 35R30; Secondary 35F25, 35R09,
35B45, 65M32.

\section{Introduction}
\label{sec:intro}

Let $\Omega\subset\mathbb{R}^d$, $d\geq 1$, be a bounded domain with
smooth boundary $\partial\Omega$, and let $T>0$ be fixed. We set
\[
Q_T:=\Omega\times(0,T) \qquad 
\mbox{and}
\qquad \Gamma_T = \partial \Omega \times (0, T),
\]
and define the inflow and outflow portions of the boundary by
\[
\Gamma_-:=\left\{\x\in\partial\Omega:
H(\x)\cdot\nu(\x)\leq 0\right\},
\qquad
\Gamma_+:=\left\{\x\in\partial\Omega:
H(\x)\cdot\nu(\x)>0\right\},
\]
where $\nu(\x)$ denotes the outward unit normal vector to
$\partial\Omega$ at $\x$.

We consider the following nonlinear first-order transport equation with
a memory term:
\begin{equation}
\begin{cases}
\displaystyle
\partial_t u
+c(\x,u)H(\x)\cdot\nabla u
+f(\x,u)
+\int_0^t \alpha(\x,t-t')u(\x,t')\,dt'=0,
& (\x,t)\in Q_T,\\[6pt]
u(\x,0)=u^0(\x),
& \x\in\Omega,\\[4pt]
u(\x,t)=g(\x,t),
& (\x,t)\in\Gamma_-\times(0,T).
\end{cases}
\label{eq:pde}
\end{equation}
Here, $u=u(\x,t)$ is the scalar state variable,
$H:\overline{\Omega}\to\mathbb{R}^d$ is a prescribed advection field,
$c:\overline{\Omega}\times\mathbb{R}\to\mathbb{R}$ is a
state-dependent transport coefficient. We assume that there exists a
constant $\kappa_0>0$ such that
\begin{equation}
  c(\x,s)\geq\kappa_0
  \qquad
  \text{for all }\x\in\overline\Omega
  \text{ and }s\in\mathbb R.
  \label{eq:nondegenerate-velocity}
\end{equation}
$f:\overline{\Omega}\times\mathbb{R}\to\mathbb{R}$ is a nonlinear
lower-order term, and
$
\alpha:\overline{\Omega}\times\mathbb{R}\to\mathbb{R}
$
is a given memory kernel.

Physically, the vector field $H(\x)$ determines the local transport
direction and spatial flow pattern, whereas the coefficient $c(\x,u)$
scales the magnitude of the transport velocity. The effective transport
velocity in \eqref{eq:pde} is $c(\x,u)H(\x)$. In particular, the
dependence of $c$ on $u$ allows the state itself to influence its
propagation speed along the prescribed direction $H(\x)$.
At the boundary, the condition $H(\x)\cdot\nu(\x)<0$ indicates that the
flow enters $\Omega$, whereas $H(\x)\cdot\nu(\x)>0$ indicates that it
leaves $\Omega$. Accordingly, the boundary data are prescribed on the
inflow boundary $\Gamma_-$.
The convolution-type term
$
\int_0^t \alpha(\x,t-t')u(\x,t')\,dt'
$
incorporates memory into the model by allowing the state at time $t$ to
depend on its history over the interval $[0,t]$. The kernel $\alpha$
determines how strongly past states influence the current dynamics.

We consider the following inverse initial-value problem.

\begin{problem}[Inverse initial-data problem]
Assume that $u\in C^1(\overline{\Omega}\times[0,T]) \cap H^3((0, T); H^p(\Omega))$. Given the
outflow boundary observation
\[
h(\x,t):=u(\x,t),
\qquad
(\x,t)\in\Gamma_+\times(0,T),
\]
reconstruct the initial state $u^0(\x)$ for all $\x\in\Omega$.
\label{p}
\end{problem}

Nonlinear transport equations and evolution equations with
history-dependent effects arise in several areas of applied
mathematics. Classical scalar conservation laws used in traffic-flow
modeling provide fundamental examples of state-dependent transport
\cite{LighthillWhitham1955,Richards1956}, while memory, retention,
delayed-response, and nonlocal-in-time effects occur in models of
solute transport in heterogeneous porous media
\cite{VanGenuchtenWagenet1989}, transport through fractured geological
formations \cite{GrafSimmons2009}, and rate-dependent granular flows
\cite{SavageHutter1989,PudasainiHutter2007}. Related mathematical
models include scalar conservation laws with fading memory, in which a
Volterra-type convolution acts on the flux
\cite{ChenChristoforou2007}, as well as measure transport equations
with memory effects \cite{CamilliDeMaio2019}.

Inverse problems for transport equations have also been studied in
both stationary and time-dependent settings. A substantial part of
this literature concerns inverse transport and radiative transfer,
where unknown coefficients or sources are reconstructed from boundary
or albedo-type measurements
\cite{Bal2009,BalJollivet2008,BalJollivet2018,BalTamasan2007}.
For linear time-dependent first-order transport equations, Carleman
estimates have been used to establish global uniqueness
\cite{KlibanovPamyatnykh2008} and Lipschitz stability
\cite{KlibanovPamyatnykh2006Stability} for coefficient inverse
problems. Related coefficient inverse problems have also been treated
using local Carleman estimates
\cite{CannarsaFloridiaGolgeleyenYamamoto2019,GaitanOuzzane2014}.
Parameter reconstruction has been studied for more general transport
models involving external fields, absorption, and source terms
\cite{LaiLi2020}, while inverse problems for nonlinear
time-dependent transport equations have recently been investigated
\cite{LaiZhou2024}. Another related direction is the identification
of memory kernels in evolution equations with history-dependent
effects, including strongly damped wave equations
\cite{Colombo2007,ColomboGuidetti2009} and viscoelastic models with
time- and space-dependent kernels \cite{JannoVonWolfersdorf2001}.

Although these models do not necessarily coincide with
\eqref{eq:pde}, they motivate the study of transport processes whose
present evolution depends on both the current state and its previous
history. In many practical situations, the initial state inside the
spatial domain is not directly accessible, whereas measurements can be
collected on the boundary over a later time interval. Recovering the
initial condition from such observations provides the starting state
needed to solve the forward problem and predict the subsequent
evolution of the system. Accordingly, \eqref{eq:pde} is treated as a
prototype quasilinear transport equation with a lower-order Volterra
memory term. The inverse problem considered here consists of
recovering its unknown initial state from outflow boundary
observations, with prescribed inflow data. To the best of our
knowledge, inverse initial-data reconstruction for quasilinear
transport equations combining state-dependent transport and Volterra
memory effects has received comparatively little attention.

Solving Problem~\ref{p} involves two principal difficulties. First, the
forward equation is nonlinear. This nonlinearity arises from the
dependence of the transport coefficient $c(\x,u)$ and the lower-order
term $f(\x,u)$ on the unknown state $u$. In addition, the Volterra term
couples the state at the current time with its entire previous history,
making the equation nonlocal in time. Consequently, the relation
between the unknown initial condition and the measured outflow data is
both nonlinear and history-dependent. Such memory and nonlocal-in-time effects arise in transport models
with retention, relaxation, or delayed-response mechanisms
\cite{ChenChristoforou2007,CamilliDeMaio2019}. By contrast, inverse
problems recovering the memory kernel itself have been studied for
other evolution models, including strongly damped wave equations and
viscoelastic systems \cite{Colombo2007,JannoVonWolfersdorf2001}.

Second, the reconstruction has a
backward character: one seeks to determine the state inside $\Omega$ at
the initial time from measurements collected later only on the
boundary. Such a reconstruction can be highly sensitive to measurement
noise and modeling errors. In the sense of Hadamard, backward inverse problems
are typically ill-posed, since small perturbations in the data may lead
to large errors in the reconstructed state unless appropriate
regularization is imposed \cite{TikhonovArsenin1977,LattesLions1969}.
The difficulty is further amplified by the fact that only the outflow
portion $\Gamma_+$ of the boundary is observed, rather than full
boundary or interior data. Thus an effective reconstruction method must
control both the instability caused by the backward nature of the
problem and the loss of information associated with partial boundary
observations.

To address these difficulties, we combine
regularization with a Carleman-based globally convergent technique.
Among the available approaches, including Carleman convexification \cite{KlibanovIoussoupova:SMA1995}, the
Carleman contraction principle \cite{LeNguyen2022, Nguyen2023CarlemanContraction}, and the Carleman--Newton method \cite{AbhishekLeNguyenKhan, LeNguyenTran:CAMWA2022}, we
adopt the Carleman contraction principle because it leads to a
computationally inexpensive iteration, converges rapidly in practice,
and is comparatively simple to implement. The regularization stabilizes
the reconstruction, while the Carleman estimate makes the resulting
Picard map contractive and thereby provides a systematic treatment of
the nonlinear reduced system. The
time-dimensional reduction technique itself has been used to recover
initial states for several other classes of evolution equations,
including anisotropic elastic systems, nonlinear parabolic and
hyperbolic equations, the nonlinear Schr\"odinger equation, Maxwell's
equations, the Navier--Stokes equations, and convection-diffusion
equations with memory \cite{DangLeLuuNguyen2025Elastic, DangNguyenVu2024, Le2023, LeNguyenNguyenPark2024, LeVanDangNguyen2025Maxwell, NeupaneNguyen2026NLS, NguyenNguyen2026NavierStokesForce, VanLeNguyen2026, VanNguyenTranNguyen2026Memory}.

The Carleman contraction principle can be viewed as a globalized version
of the classical Picard iteration. For a standard Picard scheme, the
convergence is generally local because the stability, or coercivity,
constant associated with the principal linear operator is fixed. Hence,
the nonlinear terms can be controlled only when the current iterate is
sufficiently close to the exact solution. In the Carleman contraction
principle, each frozen linearized problem is instead solved by
minimizing a Carleman-weighted regularized functional. The corresponding
Carleman estimate produces a coercive term whose strength increases
with the Carleman parameter $\lambda$, whereas the Lipschitz constant
of the nonlinear terms remains fixed on the admissible set. By choosing
$\lambda$ sufficiently large, the strengthened coercivity dominates
the nonlinear perturbations and makes the resulting Picard map
contractive. Consequently, the initial guess need not be close to the
exact solution; it may be chosen arbitrarily within the prescribed
admissible set.

The remainder of the paper is organized as follows.
Section~\ref{sec:preliminaries} states the geometric propagation
condition, establishes the Carleman estimate for $H\cdot\nabla$, and
recalls the Legendre--exponential basis. Section~\ref{sec:reduction}
derives the finite nonlinear system for the spatial modal coefficients
by time-dimensional reduction. Section~\ref{sec:contraction} introduces
the Carleman-weighted Picard map and proves its contraction property and
global convergence within the admissible set. Section~\ref{sec:stability}
establishes stability of the reduced reconstruction with respect to
noisy outflow data. Section~\ref{sec:numerics} describes the numerical
implementation and presents the reconstruction experiments, including
the study of the Carleman parameter. Finally,
Section~\ref{sec:conclusion} summarizes the main results.
\section{Preliminaries}
\label{sec:preliminaries}

This section collects the main ingredients needed for the subsequent
time-dimensional reduction and the convergence and stability analyses.
We first state the geometric propagation assumptions, then establish a
Carleman estimate for the principal transport operator, and finally
recall the Legendre--exponential basis used in the time-dimensional
reduction.

\subsection{Geometric propagation condition}

To recover the initial state $u^0$ throughout $\Omega$ from measurements
on $\Gamma_+$, the information transported from every point of
$\Omega$ must reach the observed outflow boundary within the time
interval $(0,T)$. We now formulate assumptions on $H$, $c$, and $T$
that ensure this propagation property.
 To state them
independently of the unknown solution, let
$Y(\sigma;\x)$ denote the forward flow generated by $H$:
\begin{equation}
  \frac{dY}{d\sigma}=H(Y),
  \qquad
  Y(0;\x)=\x.
  \label{eq:H-flow}
\end{equation}
Here, $\sigma$ is the flow parameter along the integral curves of $H$;
it is distinct from the physical time variable $t$ in
\eqref{eq:pde}.
For $\x\in\Omega$, define the forward outflow exit parameter
\begin{equation}
  \tau_H(\x)
  :=
  \inf\{\sigma>0:Y(\sigma;\x)\in\Gamma_+\}.
  \label{eq:H-exit-parameter}
\end{equation}
We assume that $H$ is nontrapping: every forward integral curve of
$H$ starting in $\Omega$ reaches the outflow boundary $\Gamma_+$ in
finite time. More precisely, we impose the uniform nontrapping
condition
\begin{equation}
  \tau_H(\x)<\infty\quad\text{for every }\x\in\Omega,
  \qquad
  \tau_H^*
  :=
  \sup_{\x\in\Omega}\tau_H(\x)<\infty.
  \label{eq:nontrapping}
\end{equation}
We then require the observation time to satisfy
\begin{equation}
  T>\frac{\tau_H^*}{\kappa_0}.
  \label{eq:full-outflow-observability}
\end{equation}
Because the physical transport velocity is $c(\x,u)H(\x)$ and
$c\geq\kappa_0$, its travel time to $\Gamma_+$ is at most
$\tau_H(\x)/\kappa_0<T$. Consequently, the observation interval is
long enough for the entire initial state to propagate to the observed
boundary. 

The preceding conditions encode a necessary geometric observability
requirement for uniqueness: no part of the initial state can remain
invisible to the outflow measurement throughout $(0,T)$. Conditions
\eqref{eq:nontrapping} and
\eqref{eq:full-outflow-observability} provide a convenient sufficient
criterion for this propagation requirement, but they should not be
interpreted as a complete set of sufficient conditions for uniqueness
of the original inverse problem. Establishing sharp conditions and a
rigorous uniqueness theorem for the quasilinear transport equation
with a state-dependent velocity and a nonlocal memory term is a
challenging problem and lies outside the scope of this paper.  Accordingly, the solvability, uniqueness, convergence, and stability
results established in this paper are understood within the
approximation framework of the time-dimensional reduction. In other
words, these results apply to the truncated system for the temporal
coefficients, which serves as an approximation of the original inverse
problem for $u(\x,t)$.

\subsection{Carleman estimate}
\label{sec:carleman}

The main analytical tool used in the subsequent convergence and
stability analysis is a Carleman estimate for the principal transport
operator
\[
  u\mapsto H(\x)\cdot\nabla u.
\]
This is the principal part of the reduced system
\eqref{eq:reduced-full-system}. The lower-order terms will be treated
after the estimate for this operator has been established.

Throughout this section, we assume that
\[
  H\in C^1(\overline{\Omega};\mathbb{R}^d).
\]
We assume that there exists a function
\[
  \varphi_*\in C^2(\overline{\Omega})
\]
such that
\begin{equation}
  \label{eq:carleman-weight-condition}
  H(\x)\cdot\nabla\varphi_*(\x)\geq \mu_0>0,
  \qquad \x\in\overline{\Omega}
\end{equation}
Thus $\varphi_*$ is increasing along the integral curves of $H$.
Under appropriate geometric assumptions on
$H$ and $\Omega$, such weights can be constructed; see, for
example,~\cite{CannarsaFloridiaGolgeleyenYamamoto2019}. Here we assume the
existence of a global $C^2$ weight in order to keep the presentation
focused on the inverse reconstruction method.

\begin{example}\label{ex:carleman-weight}
Let $d=2$, $\Omega=(-1,1)^2$, and
\[
  H(x,y)
  =
  \left(
  1+0.25\sin(\pi y)\cos(\pi x),
  0.5+0.20\cos(\pi x)\cos(\pi y)
  \right)^\top.
\]
We choose
\[
  \varphi_*(x,y)=x+0.5y.
\]
Then
\[
  H\cdot\nabla\varphi_*
  =
  1.25
  +0.25\sin(\pi y)\cos(\pi x)
  +0.10\cos(\pi x)\cos(\pi y)
  \geq 0.9.
\]
Therefore, \eqref{eq:carleman-weight-condition} holds with
$\mu_0=0.9$.
\end{example}

The following Carleman estimate involves only the principal operator
$H(\x)\cdot\nabla$.

\begin{proposition}[Carleman estimate]
  \label{prop:transport-carleman}
  Let $\varphi_*$ satisfy \eqref{eq:carleman-weight-condition}. Then
  there exist constants $\lambda_0>0$ and $C>0$, depending only on
  $\Omega$, $H$, $\mu_0$, and $\varphi_*$, such that, for all
  $\lambda\geq\lambda_0$ and all $v\in H^1(\Omega)$,
  \begin{equation}
    \lambda\int_{\Omega}e^{2\lambda\varphi_*}|v|^2\,d\x
    \leq
    C\int_{\Omega}e^{2\lambda\varphi_*}
    |H\cdot\nabla v|^2\,d\x
    +
    C\lambda\int_{\partial\Omega}
    e^{2\lambda\varphi_*}|v|^2\,dS.
    \label{eq:transport-carleman}
  \end{equation}
\end{proposition}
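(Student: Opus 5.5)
The plan is a direct integration-by-parts argument with the conjugated variable $w:=e^{\lambda\varphi_*}v$. By density it suffices to treat $v\in C^1(\overline{\Omega})$; both sides of \eqref{eq:transport-carleman} are continuous in the $H^1(\Omega)$ norm, using the trace theorem for the boundary term, so the estimate then extends to all of $H^1(\Omega)$.

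For smooth $v$ we have
\[
  e^{\lambda\varphi_*}H\cdot\nabla v
  =
  H\cdot\nabla w-\lambda (H\cdot\nabla\varphi_*)\,w .
\]
The key identity comes from multiplying the conjugated expression by $-w$ and integrating over $\Omega$:
\[
  -\int_\Omega e^{\lambda\varphi_*}(H\cdot\nabla v)\,w\,d\x
  =
  \lambda\int_\Omega (H\cdot\nabla\varphi_*)\,w^2\,d\x
  -\int_\Omega H\cdot\nabla\Big(\tfrac{w^2}{2}\Big)\,d\x .
\]
The last integral is handled with the divergence theorem:
\[
  \int_\Omega H\cdot\nabla\Big(\tfrac{w^2}{2}\Big)\,d\x
  =
  \tfrac12\int_{\partial\Omega}(H\cdot\nu)\,w^2\,dS
  -\tfrac12\int_\Omega (\operatorname{div}H)\,w^2\,d\x .
\]
We then use the weight condition \eqref{eq:carleman-weight-condition} on the first term on the right, and bound the two pieces of the divergence term by $\|H\|_{C^1}$. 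This gives
\[
  \lambda\mu_0\int_\Omega w^2\,d\x
  \le
  \Big|\int_\Omega e^{\lambda\varphi_*}(H\cdot\nabla v)\,w\,d\x\Big|
  +C_1\int_\Omega w^2\,d\x
  +C_1\int_{\partial\Omega}w^2\,dS ,
\]
with $C_1$ depending only on $H$ and $\Omega$.

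Next we absorb the mixed term. By Cauchy--Schwarz with a weight,
\[
  \Big|\int_\Omega e^{\lambda\varphi_*}(H\cdot\nabla v)\,w\,d\x\Big|
  \le
  \frac{\lambda\mu_0}{4}\int_\Omega w^2\,d\x
  +\frac{1}{\lambda\mu_0}\int_\Omega e^{2\lambda\varphi_*}|H\cdot\nabla v|^2\,d\x .
\]
We choose $\lambda_0$ so large that $C_1\le\lambda\mu_0/4$ for all $\lambda\ge\lambda_0$. Then the $w^2$ terms on the right are absorbed, leaving
\[
  \frac{\lambda\mu_0}{2}\int_\Omega w^2\,d\x
  \le
  \frac{1}{\lambda\mu_0}\int_\Omega e^{2\lambda\varphi_*}|H\cdot\nabla v|^2\,d\x
  +C_1\int_{\partial\Omega}w^2\,dS .
\]
Substituting back $w^2=e^{2\lambda\varphi_*}|v|^2$ yields \eqref{eq:transport-carleman}; in fact it gives a stronger factor $\lambda^{-1}$ in front of the gradient term, which is bounded by a constant once $\lambda\ge\lambda_0\ge1$. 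The boundary term comes out with a constant rather than $\lambda$, and it can be weakened to $C\lambda$ as stated. All constants depend only on $\mu_0$, $\|H\|_{C^1}$ and $\Omega$. Since $\varphi_*$ enters only through the weight, this dependence is consistent with the statement.

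The only step needing care is the sign of the boundary term. Keeping the inflow and outflow pieces separately would give a sharper, one-sided boundary estimate. Since the statement allows the full boundary with a generous constant, it is enough to bound $|H\cdot\nu|\le\|H\|_\infty$. So I do not expect a genuine obstacle; the main point is to choose $\lambda_0$ so that the zeroth-order $\operatorname{div}H$ term is dominated by $\lambda\mu_0$.
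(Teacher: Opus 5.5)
Your proposal is correct and is essentially the paper's proof. The shared steps are the conjugation $w=e^{\lambda\varphi_*}v$, multiplying by $-w$, the divergence theorem, the weighted Cauchy inequality with the same $\lambda\mu_0/4$ split, and absorption of the $\operatorname{div}H$ term for large $\lambda$. Your only additions are the explicit density reduction to $C^1(\overline{\Omega})$, which the paper leaves implicit, and the remark that the argument actually gives the sharper factors $\lambda^{-1}$ on the gradient term and $O(1)$ on the boundary term.
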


\begin{proof}
Set $w=e^{\lambda\varphi_*}v$. Then
\[
  e^{\lambda\varphi_*}H\cdot\nabla v
  =
  H\cdot\nabla w
  -\lambda(H\cdot\nabla\varphi_*)w.
\]
Multiplying this identity by $-w$ and integrating over $\Omega$, we
first obtain
\[
  -\int_\Omega
  e^{\lambda\varphi_*}(H\cdot\nabla v)w\,d\x
  =
  -\int_\Omega(H\cdot\nabla w)w\,d\x
  +\lambda\int_\Omega
  (H\cdot\nabla\varphi_*)|w|^2\,d\x.
\]
Since
\[
  (H\cdot\nabla w)w
  =
  \frac12H\cdot\nabla(|w|^2)
\]
and
\[
  \operatorname{div}(H|w|^2)
  =
  (\operatorname{div}H)|w|^2
  +H\cdot\nabla(|w|^2),
\]
the divergence theorem gives
\[
  -\int_\Omega(H\cdot\nabla w)w\,d\x
  =
  -\frac12\int_{\partial\Omega}
  (H\cdot\nu)|w|^2\,dS
  +\frac12\int_\Omega(\operatorname{div}H)|w|^2\,d\x.
\]
Consequently,
\[
  -\int_\Omega
  e^{\lambda\varphi_*}(H\cdot\nabla v)w\,d\x
  =
  -\frac12\int_{\partial\Omega}
  (H\cdot\nu)|w|^2\,dS
  +\frac12\int_\Omega(\operatorname{div}H)|w|^2\,d\x
  +\lambda\int_\Omega
  (H\cdot\nabla\varphi_*)|w|^2\,d\x.
\]
Using \eqref{eq:carleman-weight-condition} and estimating the boundary
term by its absolute value gives
\begin{equation*}
  -\int_\Omega
  e^{\lambda\varphi_*}(H\cdot\nabla v)w\,d\x
  \geq
  \left(\lambda\mu_0
  -\frac12\|\operatorname{div}H\|_{L^\infty(\Omega)}\right)
  \int_\Omega|w|^2\,d\x
  -\frac12\int_{\partial\Omega}|w|^2|H\cdot\nu|\,dS.
\end{equation*}
On the other hand, Cauchy's inequality gives
\[
  \left|
  \int_\Omega
  e^{\lambda\varphi_*}(H\cdot\nabla v)w\,d\x
  \right|
  \leq
  \frac{1}{\lambda\mu_0}
  \int_\Omega e^{2\lambda\varphi_*}
  |H\cdot\nabla v|^2\,d\x
  +
  \frac{\lambda\mu_0}{4}
  \int_\Omega|w|^2\,d\x.
\]
Combining the last two inequalities, we obtain
\[
  \Big(
  \frac{3\lambda\mu_0}{4}
  -\frac12\|\operatorname{div}H\|_{L^\infty(\Omega)}
  \Big)
  \int_\Omega|w|^2\,d\x
  \leq
  \frac{1}{\lambda\mu_0}
  \int_\Omega e^{2\lambda\varphi_*}
  |H\cdot\nabla v|^2\,d\x
  +
  \frac12\int_{\partial\Omega}|w|^2|H\cdot\nu|\,dS.
\]
Choose $\lambda_0\geq1$ sufficiently large that the coefficient on the
left-hand side is bounded below by $\lambda\mu_0/4$ for all
$\lambda\geq\lambda_0$. Substituting
$w=e^{\lambda\varphi_*}v$, using $\lambda\geq1$, and absorbing
$\|H\cdot\nu\|_{L^\infty(\partial\Omega)}$ into $C$ yields
\eqref{eq:transport-carleman}.
\end{proof}

Applying Proposition~\ref{prop:transport-carleman} componentwise gives the
corresponding estimate for vector-valued functions.

\begin{corollary}
  \label{cor:system-carleman}
  Let $N\geq0$ and assume that
  \eqref{eq:carleman-weight-condition} holds. Then there exist constants
  $\lambda_0>0$ and $C>0$, depending only on $\Omega$, $H$, $\mu_0$,
  and $\varphi_*$, such that, for all $\lambda\geq\lambda_0$ and all
  $U=(u_0,\ldots,u_N)^\top\in[H^1(\Omega)]^{N+1}$,
  \begin{equation}
    \lambda\int_\Omega e^{2\lambda\varphi_*}|U|^2\,d\x
    \leq
    C\int_\Omega e^{2\lambda\varphi_*}
    |H\cdot\nabla U|^2\,d\x
    +
    C\lambda\int_{\partial\Omega}
    e^{2\lambda\varphi_*}|U|^2\,dS,
    \label{eq:system-carleman}
  \end{equation}
  where $|U|^2=\displaystyle\sum_{m=0}^{N}|u_m|^2$.
\end{corollary}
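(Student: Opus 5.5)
The plan is to apply Proposition~\ref{prop:transport-carleman} to each scalar component $u_m$, $m=0,\ldots,N$, and then sum the resulting inequalities. The key observation is that the constants $\lambda_0$ and $C$ from Proposition~\ref{prop:transport-carleman} depend only on $\Omega$, $H$, $\mu_0$, and $\varphi_*$. They do not depend on the function $v$, and in particular not on the index $m$ or on $N$. So the same pair $(\lambda_0,C)$ works for every component.

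Concretely, fix $\lambda\geq\lambda_0$ and $U=(u_0,\ldots,u_N)^\top\in[H^1(\Omega)]^{N+1}$. Since each $u_m\in H^1(\Omega)$, estimate \eqref{eq:transport-carleman} gives
\[
  \lambda\int_\Omega e^{2\lambda\varphi_*}|u_m|^2\,d\x
  \leq
  C\int_\Omega e^{2\lambda\varphi_*}|H\cdot\nabla u_m|^2\,d\x
  +C\lambda\int_{\partial\Omega}e^{2\lambda\varphi_*}|u_m|^2\,dS
\]
for each $m$. Summing over $m=0,\ldots,N$ and interchanging the finite sum with the integrals, we use $|U|^2=\sum_m|u_m|^2$ and $|H\cdot\nabla U|^2=\sum_m|H\cdot\nabla u_m|^2$. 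Here $H\cdot\nabla U$ is understood componentwise, and the same convention applies to the boundary traces. This yields \eqref{eq:system-carleman} with the same constants.

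There is no real obstacle. The only points to check are the following:
\begin{itemize}[leftmargin=*]
\item The notation $|H\cdot\nabla U|^2$ should be interpreted as the Euclidean norm squared of the vector $(H\cdot\nabla u_m)_{m}$.
\item The constants are uniform in $N$, since each component uses the identical scalar estimate.
\item The case $N=0$ reduces exactly to Proposition~\ref{prop:transport-carleman}.
\end{itemize}
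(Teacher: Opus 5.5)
Your proposal is correct and is the same argument the paper uses: it simply states that applying Proposition~\ref{prop:transport-carleman} componentwise gives the vector estimate. Your version spells that out by summing the scalar inequalities over $m$, using that $\lambda_0$ and $C$ do not depend on the function and are therefore uniform in $m$ and $N$.
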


\subsection{Legendre--exponential basis}
\label{sec:legendre-exponential-basis}

We next recall the Legendre--exponential basis used in the
time-dimensional reduction. This basis is particularly convenient
because its differentiation properties allow the time derivative in
\eqref{eq:pde} to be represented in terms of the spatial modal
coefficients.
 Let $P_n$ denote the Legendre
polynomial of degree $n$ on $[-1,1]$, normalized by
\[
  \int_{-1}^{1}P_n(s)P_m(s)\,ds
  =
  \frac{2}{2n+1}\delta_{nm}.
\]
For $t\in[0,T]$, define
\begin{equation}
  \Psi_n(t)
  :=
  \sqrt{\frac{2n+1}{T}}\,
  e^t P_n\left(\frac{2t}{T}-1\right),
  \qquad n\geq0.
  \label{eq:legendre-exponential-basis}
\end{equation}
This basis was introduced in
\cite{DangLeLuuNguyen2025Elastic}.
The family $\{\Psi_n\}_{n\geq0}$ is an orthonormal basis of the
weighted space $L^2_{e^{-2t}}(0,T)$ with inner product
\begin{equation}
  \label{eq:inner-product}
  \langle f,g\rangle_{e^{-2t}}
  :=
  \int_0^T e^{-2t}f(t)g(t)\,dt.
\end{equation}

For
$u\in L^2_{e^{-2t}}((0,T);H^p(\Omega))$, $p\geq0$, define
\begin{equation}
  u_n(\x)
  :=
  \langle u(\x,\cdot),\Psi_n\rangle_{e^{-2t}}
  =
  \int_0^T e^{-2t}u(\x,t)\Psi_n(t)\,dt.
  \label{eq:temporal-coefficients}
\end{equation}

We recall the differentiation result established for this basis in
\cite[Theorem~1]{DangLeLuuNguyen2025Elastic}.

\begin{proposition}[Differentiation of the time expansion]
\label{prop:termwise-differentiation}
Let $p\geq0$, let
$u\in L^2_{e^{-2t}}((0,T);H^p(\Omega))$, and let $u_n$ be given by
\eqref{eq:temporal-coefficients}. For $j\in\{1,2\}$, suppose that
\begin{equation}
  \sum_{n=0}^{\infty}u_n(\x)\Psi_n^{(j)}(t)
  \label{eq:differentiated-infinite-series}
\end{equation}
converges in
$L^2_{e^{-2t}}((0,T);H^p(\Omega))$. Then the weak derivative
$\partial_t^j u$ belongs to this space and
\begin{equation}
  \partial_t^j u(\x,t)
  =
  \sum_{n=0}^{\infty}u_n(\x)\Psi_n^{(j)}(t)
  \quad\text{in }
  L^2_{e^{-2t}}((0,T);H^p(\Omega)).
  \label{eq:commutation-time-series}
\end{equation}
Moreover, the convergence assumption is satisfied if
\[
  u\in H^k((0,T);H^p(\Omega))
  \qquad\text{with}\qquad
  k\geq2j+1.
\]
In particular, $k\geq3$ suffices for the first derivative used in this
paper, while $k\geq5$ suffices for the second derivative.
\end{proposition}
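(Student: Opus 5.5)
The plan is to reduce everything to the scalar one-dimensional statement for $L^2_{e^{-2t}}(0,T)$-valued expansions, with values in the Hilbert space $X:=H^p(\Omega)$, and then to identify the weak derivative by testing against smooth compactly supported functions. Write $S_M(\x,t):=\sum_{n=0}^{M}u_n(\x)\Psi_n(t)$. Since $\{\Psi_n\}$ is an orthonormal basis of $L^2_{e^{-2t}}(0,T)$, the Bochner-space expansion of $u$ converges: $S_M\to u$ in $L^2_{e^{-2t}}((0,T);X)$. (This is Parseval applied to the separable Hilbert space $L^2_{e^{-2t}}(0,T)\otimes X$.) By hypothesis, $S_M^{(j)}=\sum_{n\le M}u_n\Psi_n^{(j)}$ converges in the same space to some $w$.

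First I identify $w=\partial_t^j u$. Because $e^{-2t}$ is bounded above and below on $[0,T]$, the weighted and unweighted $L^2((0,T);X)$ norms are equivalent. Hence both convergences also hold in $L^2((0,T);X)$. For any $\phi\in C_c^\infty(0,T)$, integration by parts for the smooth functions $S_M$ gives
\[
\int_0^T S_M^{(j)}\phi\,dt=(-1)^j\int_0^T S_M\,\phi^{(j)}\,dt .
\]
Passing to the limit $M\to\infty$ in $X$, which is justified by Cauchy--Schwarz, yields $\int_0^T w\phi\,dt=(-1)^j\int_0^T u\,\phi^{(j)}\,dt$. Thus $\partial_t^j u=w$ weakly, and \eqref{eq:commutation-time-series} holds. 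This part is routine.

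The main obstacle is the sufficiency claim: if $u\in H^k((0,T);X)$ with $k\ge 2j+1$, then the differentiated series converges. I would proceed in three steps.

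\emph{Step 1: growth of $\Psi_n^{(j)}$.} We have $\Psi_n^{(j)}=\sqrt{(2n+1)/T}\,e^t\sum_{i\le j}\binom{j}{i}(2/T)^iP_n^{(i)}(2t/T-1)$. Using the standard bound $\|P_n^{(i)}\|_{L^2(-1,1)}\lesssim n^{2i}$, this gives $\|\Psi_n^{(j)}\|_{L^2_{e^{-2t}}}\le C n^{2j}$.

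\emph{Step 2: decay of $\|u_n\|_X$.} Set $v:=e^{-t}u$, so that $u_n=\sqrt{(2n+1)/T}\int_0^T v(t)P_n(2t/T-1)\,dt$, which is essentially the Legendre coefficient of $v\in H^k((0,T);X)$. I would iterate the identity $(2n+1)P_n=(P_{n+1}-P_{n-1})'$ and integrate by parts $k$ times. The boundary terms vanish because $P_{n+1}-P_{n-1}=0$ at $\pm1$; at later stages they are handled by the same identity applied to combinations of Legendre functions, as in classical Legendre approximation theory. This should give $\|u_n\|_X\le C n^{-k+1/2}\|v\|_{H^k((0,T);X)}$. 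The weighted Sturm--Liouville route, using $\bigl((1-s^2)P_n'\bigr)'=-n(n+1)P_n$, instead gives the sharper decay $n^{-k}$, but only for the weighted norms; I would pick whichever the bookkeeping allows.

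\emph{Step 3: summability.} Then $\sum_n\|u_n\|_X\|\Psi_n^{(j)}\|\le C\sum_n n^{2j-k+1/2}$. This sum converges once $k>2j+3/2$, which is slightly worse than the threshold $k\ge 2j+1$ stated in the proposition. To recover $2j+1$ I would not use absolute convergence. Instead I would bound the tail $\bigl\|\sum_{n=M}^{M'}u_n\Psi_n^{(j)}\bigr\|$ through near-orthogonality: $\Psi_n^{(j)}$ is $e^t$ times a polynomial of degree $n$, so $\{\Psi_n^{(j)}\}$ is controlled by an expansion in the orthonormal system $\Psi_m$ with $m\le n$. Alternatively I would simply cite \cite[Theorem~1]{DangLeLuuNguyen2025Elastic} for the exact threshold, since the proposition is recalled from there. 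Getting the sharp exponent is the step I expect to be delicate; the weak-derivative identification itself is straightforward.
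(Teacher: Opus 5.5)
The paper gives no proof of this proposition. It is quoted from \cite[Theorem~1]{DangLeLuuNguyen2025Elastic}, so your fallback of citing that result is exactly what the paper does. Your identification of the limit of the differentiated series with $\partial_t^j u$ is correct and complete: Parseval gives $S_M\to u$, the weighted and unweighted norms are equivalent, you integrate by parts against $\phi\in C_c^\infty(0,T)$, and you pass to the limit.

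Your self-contained sufficiency argument, however, does not reach the stated threshold $k\ge 2j+1$; as you note, it only yields $k>2j+3/2$. The loss is not intrinsic, and near-orthogonality is not needed. It comes from converting the coefficient decay into a pointwise bound and then summing term by term. Keep the decay square-summable instead.

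With $s=2t/T-1$ and $\tilde v(s)=e^{-t}u(t)$, one has
\[
u_n=\sqrt{T/2}\,\Bigl\langle\tilde v,\sqrt{(2n+1)/2}\,P_n\Bigr\rangle_{L^2(-1,1)} .
\]
Take $L=-\frac{d}{ds}(1-s^2)\frac{d}{ds}$, so that $LP_n=n(n+1)P_n$; the boundary terms vanish because of the factor $1-s^2$. The Sturm--Liouville identity then gives
\[
\sum_n (n(n+1))^k\|u_n\|_X^2=\tfrac{T}{2}\|L^{k/2}\tilde v\|^2 \quad (k \text{ even}),
\]
and
\[
\sum_n (n(n+1))^k\|u_n\|_X^2=\tfrac{T}{2}\bigl\|(1-s^2)^{1/2}(L^{(k-1)/2}\tilde v)'\bigr\|^2 \quad (k \text{ odd}).
\]
Since $L$ has smooth coefficients and $0\le 1-s^2\le 1$, both right-hand sides are bounded by $C\|u\|^2_{H^k((0,T);X)}$. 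So your worry that this route works ``only for the weighted norms'' is unfounded. Combining this with $\|\Psi_n^{(j)}\|_{L^2_{e^{-2t}}}\le Cn^{2j}$ and Cauchy--Schwarz gives
\[
\sum_{n\ge1}\|u_n\|_X\,\|\Psi_n^{(j)}\|_{L^2_{e^{-2t}}}
\le C\Bigl(\sum_{n\ge1}n^{4j-2k}\Bigr)^{1/2}\Bigl(\sum_{n\ge1}n^{2k}\|u_n\|_X^2\Bigr)^{1/2}.
\]
The right-hand side is finite iff $2k-4j>1$, that is, iff $k\ge 2j+1$ for integer $k$. The differentiated series therefore converges absolutely in $L^2_{e^{-2t}}((0,T);X)$.

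One bookkeeping point in your Step~1 matters for hitting this threshold. The bound $Cn^{2j}$ should come from writing $\Psi_n=e^t q_n$, where $q_n=\sqrt{(2n+1)/T}\,P_n(2t/T-1)$ has unit norm in $L^2(0,T)$. Then $\Psi_n^{(j)}=e^t(1+\frac{d}{dt})^j q_n$, and you apply Markov's $L^2$ inequality $\|q'\|_{L^2(0,T)}\le Cn^2\|q\|_{L^2(0,T)}$ to the normalized polynomial $q_n$. Your Step~1 as written instead combines $\|P_n^{(i)}\|_{L^2(-1,1)}\lesssim n^{2i}$ with the prefactor $\sqrt{(2n+1)/T}$. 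That literally yields only $n^{2j+1/2}$, which in the Cauchy--Schwarz step above would again force $k\ge 2j+2$.
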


\begin{Remark}
\label{rem:choice-of-time-basis}
The Legendre--exponential basis is used because none of its basis
functions has an identically vanishing derivative \cite[Proposition 2.1]{DangLeLuuNguyen2025Elastic}:
\[
  \Psi_n'\not\equiv 0
  \qquad \text{for every } n\geq 0.
\]
Indeed, under the truncated expansion
\[
  u(\x,t)\approx\sum_{n=0}^N u_n(\x)\Psi_n(t),
\]
the time derivative is approximated by
\[
  u_t(\x,t)\approx\sum_{n=0}^N u_n(\x)\Psi_n'(t).
\]
Hence, no retained spatial coefficient $u_n$ is eliminated
identically when the truncated temporal expansion is differentiated. This property is not shared by many
standard orthonormal bases. For example, a polynomial basis typically
contains a constant first function whose derivative vanishes, causing
the corresponding modal coefficient to disappear from the
time-derivative term. The exponential factor in the present basis
prevents this degeneracy. In addition, truncating the temporal
expansion serves as a regularization mechanism by suppressing
high-order temporal modes that may be sensitive to noise.
\end{Remark}

\section{Time-dimensional reduction}
\label{sec:reduction}

We now begin the time-dimensional reduction. The first step is to
divide both sides of \eqref{eq:pde} by $c(\x,u)$. This division is well
defined because of \eqref{eq:nondegenerate-velocity}. We obtain
\begin{equation}
  \frac{1}{c(\x,u)}\partial_tu
  +H(\x)\cdot\nabla u
  +\frac{f(\x,u)}{c(\x,u)}
  +\frac{1}{c(\x,u)}
   \int_0^t\alpha(\x,t-t')u(\x,t')\,dt'
  =0
  \qquad\text{in }Q_T.
  \label{eq:revised_pde}
\end{equation}

By the completeness of the basis and
Proposition~\ref{prop:termwise-differentiation}, we expand $u$ and its
time derivative as
\begin{equation}
  u(\x,t)
  =
  \sum_{n=0}^{\infty}u_n(\x)\Psi_n(t),
  \qquad
  \partial_tu(\x,t)
  =
  \sum_{n=0}^{\infty}u_n(\x)\Psi_n'(t).
  \label{eq:infinite-time-expansions}
\end{equation}
These equalities are understood in the weighted spaces specified in
Proposition~\ref{prop:termwise-differentiation}.

Substituting \eqref{eq:infinite-time-expansions} into
\eqref{eq:revised_pde} gives
\begin{multline}
  \frac{1}{
    \displaystyle
    c\left(\x,\sum_{n=0}^{\infty}u_n(\x)\Psi_n(t)\right)}
  \sum_{n=0}^{\infty}u_n(\x)\Psi_n'(t)
  +
  \sum_{n=0}^{\infty}
  \bigl(H(\x)\cdot\nabla u_n(\x)\bigr)\Psi_n(t)
  +
  \frac{
    \displaystyle
    f\left(\x,\sum_{n=0}^{\infty}u_n(\x)\Psi_n(t)\right)
  }{
    \displaystyle
    c\left(\x,\sum_{n=0}^{\infty}u_n(\x)\Psi_n(t)\right)
  }
  \\
  +
  \frac{1}{
    \displaystyle
    c\left(\x,\sum_{n=0}^{\infty}u_n(\x)\Psi_n(t)\right)}
  \sum_{n=0}^{\infty}u_n(\x)
  \int_0^t\alpha(\x,t-t')\Psi_n(t')\,dt'
  =0,
  \label{eq:infinite-expanded-pde}
\end{multline}
for all $(\x, t) \in Q_T.$
For each $m\geq0$, we multiply both sides of
\eqref{eq:infinite-expanded-pde} by $e^{-2t}\Psi_m(t)$ and integrate
with respect to $t$ over $(0,T)$. Using the orthonormality of
$\{\Psi_n\}_{n\geq0}$, we obtain
\begin{multline}
  H(\x)\cdot\nabla u_m(\x)
  +\sum_{n=0}^{\infty} 
  S_{mn}\bigl(\x,\{u_j\}_{j\geq0}\bigr)u_n(\x)
  +
  \\
  \sum_{n=0}^{\infty}
  M_{mn}\bigl(\x,\{u_j\}_{j\geq0}\bigr)u_n(\x)
  +F_m\bigl(\x,\{u_j\}_{j\geq0}\bigr)
  =0.
  \label{eq:projected-infinite-pde}
\end{multline}
for all $\x\in\Omega$ and $m\geq0$, where, for $m,n\geq0$,
\begin{equation}
  S_{mn}\bigl(\x,\{u_j\}_{j\geq0}\bigr)
  :=
  \int_0^T
  \frac{e^{-2t}\Psi_n'(t)\Psi_m(t)}
  {\displaystyle
   c\left(\x,\sum_{j=0}^{\infty}u_j(\x)\Psi_j(t)\right)}
  \,dt,
  \label{eq:infinite-Smn}
\end{equation}
\begin{equation}
  M_{mn}\bigl(\x,\{u_j\}_{j\geq0}\bigr)
  :=
  \int_0^T
  \frac{e^{-2t}\Psi_m(t)}
  {\displaystyle
   c\left(\x,\sum_{j=0}^{\infty}u_j(\x)\Psi_j(t)\right)}
  \left(
    \int_0^t\alpha(\x,t-t')\Psi_n(t')\,dt'
  \right)dt,
  \label{eq:infinite-Mmn}
\end{equation}
and
\begin{equation}
  F_m\bigl(\x,\{u_j\}_{j\geq0}\bigr)
  :=
  \int_0^T
  \frac{\displaystyle
    e^{-2t}
    f\left(\x,\sum_{j=0}^{\infty}u_j(\x)\Psi_j(t)\right)}
  {\displaystyle
    c\left(\x,\sum_{j=0}^{\infty}u_j(\x)\Psi_j(t)\right)}
  \Psi_m(t)\,dt.
  \label{eq:infinite-Fm}
\end{equation}

In computation, we approximate
\eqref{eq:projected-infinite-pde} by retaining the first $N+1$
components $u_0,\ldots,u_N$ and neglecting the tail
$\{u_j\}_{j>N}$. Since the series in
\eqref{eq:infinite-time-expansions} converges, the neglected tail tends
to zero in the corresponding weighted norm as $N\to\infty$. For the
resulting truncated series, differentiation and summation commute
algebraically.

The truncated version of \eqref{eq:projected-infinite-pde} is
\begin{multline}
  H(\x)\cdot\nabla u_m(\x)
  +\sum_{n=0}^{N}
  S_{mn}\bigl(\x,\{u_j\}_{j=0}^{N}\bigr)u_n(\x)
  \\
  +\sum_{n=0}^{N}
  M_{mn}\bigl(\x,\{u_j\}_{j=0}^{N}\bigr)u_n(\x)
  +F_m\bigl(\x,\{u_j\}_{j=0}^{N}\bigr)
  =0.
  \label{eq:truncated-projected-pde}
\end{multline}
for all $\x\in\Omega$ and $m=0,\ldots,N$.

The inflow data and outflow observation are projected in the same basis.
Projecting $u=g$ on $\Gamma_-\times(0,T)$ gives
\begin{equation}
  \label{eq:g_m}
  g_m(\x)
  =
  \int_0^T e^{-2t}g(\x,t)\Psi_m(t)\,dt,
  \qquad \x\in\Gamma_-.
\end{equation}
Similarly, under the full outflow observation assumption
$\Gamma_{\mathrm{obs}}=\Gamma_+$, the measured data $u=h$ on
$\Gamma_+\times(0,T)$ gives
\begin{equation}
  \label{eq:h_m}
  h_m(\x)
  =
  \int_0^T e^{-2t}h(\x,t)\Psi_m(t)\,dt,
  \qquad \x\in\Gamma_+.
\end{equation}

We introduce the modal vector
\begin{equation}
  U(\x)
  =
  \bigl(u_0(\x),\ldots,u_N(\x)\bigr)^\top
  \in\mathbb{R}^{N+1}.
  \label{eq:U-vector}
\end{equation}
Therefore, the boundary conditions for the modal vector are
\begin{equation}
  \begin{cases}
  U(\x)
  =
  \bigl(g_0(\x),\ldots,g_N(\x)\bigr)^\top,
  & \x\in\Gamma_-,
  \\[4pt]
  U(\x)
  =
  \bigl(h_0(\x),\ldots,h_N(\x)\bigr)^\top,
  & \x\in\Gamma_+.
  \end{cases}
  \label{eq:modal-vector-boundary}
\end{equation}

For
\[
  \Phi=(\phi_0,\ldots,\phi_N)^\top
  \qquad\text{and}\qquad
  V=(v_0,\ldots,v_N)^\top,
\]
we collect the frozen nonlinear terms into the vector
$\mathcal N(\x;\Phi,V)$, whose $m^{\rm th}$ component is
\begin{multline}
  \bigl[\mathcal N(\x;\Phi,V)\bigr]_m
  :=
  \sum_{n=0}^{N}
  S_{mn}\bigl(\x,\{\phi_j\}_{j=0}^{N}\bigr)v_n(\x)
  \\
  +
  \sum_{n=0}^{N}
  M_{mn}\bigl(\x,\{\phi_j\}_{j=0}^{N}\bigr)v_n(\x)
  +
  F_m\bigl(\x,\{\phi_j\}_{j=0}^{N}\bigr),
  \qquad m=0,\ldots,N.
  \label{eq:frozen-nonlinearity}
\end{multline}
In particular, the nonlinear terms in the reduced equation are given
by $\mathcal N(\x;U,U)$.

Combining \eqref{eq:truncated-projected-pde}, \eqref{eq:g_m}, and
\eqref{eq:h_m},
we obtain, for each $m = 0,\dots,N$, the following coupled nonlinear
first-order spatial system with boundary conditions:
\begin{equation}
  \label{eq:reduced-full-system}
  \begin{cases}
    \displaystyle
    H(\x)\cdot\nabla u_m(\x)
    +\bigl[\mathcal N(\x;U,U)\bigr]_m
    = 0,
    & \x\in\Omega,
    \\[8pt]
    \displaystyle
    u_m(\x)
    = g_m(\x),
    & \x\in\Gamma_-,
    \\[8pt]
    \displaystyle
    u_m(\x)
    = h_m(\x),
    & \x\in\Gamma_+.
  \end{cases}
\end{equation}

The standard Picard iteration for solving
\eqref{eq:reduced-full-system} starts from an initial guess $U^{(0)}$.
The nonlinear terms are frozen at $U^{(0)}$, and the resulting linear
system is solved to obtain $U^{(1)}$. More precisely, writing
$U^{(k)}=(u_0^{(k)},\ldots,u_N^{(k)})^\top$, we compute $U^{(1)}$ from
\begin{equation}
  \begin{cases}
  \displaystyle
  H(\x)\cdot\nabla u_m^{(1)}(\x)
  +\bigl[\mathcal N(\x;U^{(0)},U^{(1)})\bigr]_m
  =0,
  & \x\in\Omega,
  \\[6pt]
  u_m^{(1)}(\x)=g_m(\x),
  & \x\in\Gamma_-,
  \\[4pt]
  u_m^{(1)}(\x)=h_m(\x),
  & \x\in\Gamma_+,
  \end{cases}
  \label{eq:first-standard-picard-step}
\end{equation}
for $m=0,\ldots,N$.
Thus, the coefficients are evaluated using $U^{(0)}$, whereas
$U^{(1)}$ is the unknown of this linear system. Repeating this
procedure produces a sequence $\{U^{(k)}\}_{k\geq0}$. Its convergence
depends both on the initialization $U^{(0)}$ and on how the resulting
linear system at each iteration is solved. A natural approach is to
solve the linear system \eqref{eq:first-standard-picard-step}, and the
corresponding linear systems at subsequent iterations, by an
unweighted least-squares method. Such a procedure, however, does not in
general guarantee that the resulting Picard sequence converges to a solution of
\eqref{eq:reduced-full-system}. We therefore introduce a suitable
Carleman weight into the least-squares functional. The Carleman
estimate established below implies that the resulting
Carleman--Picard map is contractive on the admissible set. Consequently,
the convergence does not require $U^{(0)}$ to be close to the true
solution, provided that $U^{(0)}$ belongs to the admissible set and the
assumptions of the convergence theorem are satisfied.

\section{The Carleman contraction principle}
\label{sec:contraction}

We now define a fixed-point reconstruction scheme and use the
vector-valued Carleman estimate from
Corollary~\ref{cor:system-carleman} to establish its contractivity.

Let $s>d/2+1$. Then
\[
  H^s(\Omega)\hookrightarrow W^{1,\infty}(\Omega),
\]
and there exists a constant $C_{\mathrm{emb}}>0$ such that
\[
  \|V\|_{[W^{1,\infty}(\Omega)]^{N+1}}
  \leq
  C_{\mathrm{emb}}\|V\|_{[H^s(\Omega)]^{N+1}}.
\]
Recalling $g_j$ and $h_j$ from \eqref{eq:g_m} and \eqref{eq:h_m},
respectively, we write
\[
  G(\x):=(g_0(\x),\ldots,g_N(\x))^\top,
  \qquad
  D^{\mathrm{obs}}(\x):=(h_0(\x),\ldots,h_N(\x))^\top
\]
for the projected inflow data and outflow observation, respectively.

For a fixed constant $M>0$, let $B_M$ denote the closed ball of radius
$M$ in $[H^s(\Omega)]^{N+1}$:
\begin{equation}
\label{eq:closed-ball}
B_M
:=
\left\{
V\in[H^s(\Omega)]^{N+1}:
\|V\|_{[H^s(\Omega)]^{N+1}}\leq M
\right\}.
\end{equation}
The set $B_M$ is nonempty, convex, bounded, and weakly closed.

For each frozen vector
$\Phi=(\phi_0,\ldots,\phi_N)^\top\in B_M$, and for parameters
$\lambda>0$ and $\varepsilon>0$, define
\begin{multline}
  J_{\Phi}^{\lambda,\varepsilon}(U)
  :=
  \int_{\Omega}
  e^{2\lambda\varphi_*}
  \left|
    H(\x)\cdot\nabla U(\x)
    +\mathcal N(\x;\Phi,U)
  \right|^2\,d\x
  \\
  +
  \lambda^2
  \int_{\Gamma_-}
  e^{2\lambda\varphi_*}
  |U-G|^2\,dS
  +
  \lambda^2
  \int_{\Gamma_+}
  e^{2\lambda\varphi_*}
  |U-D^{\mathrm{obs}}|^2\,dS
  +
  \varepsilon
  \|U\|^2_{[H^s(\Omega)]^{N+1}},
  \label{eq:backward-functional}
\end{multline}
for $U\in B_M$.
Here, the first term penalizes the residual of the frozen equation, the
next two terms fit the projected boundary data on $\Gamma_-$ and
$\Gamma_+$, respectively, and the last term is a Tikhonov
regularization term.

We next verify that the frozen Carleman-weighted minimization problem
is well posed.
\begin{proposition}
\label{prop:backward-minimizer}
For each $\Phi\in B_M$, $\lambda>0$, and $\varepsilon>0$, the
functional $J_{\Phi}^{\lambda,\varepsilon}$ admits a unique minimizer in
$B_M$.
\end{proposition}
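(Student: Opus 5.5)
The plan is to apply the direct method of the calculus of variations to a strictly convex, continuous quadratic functional on the closed convex set $B_M$. The key observation is that, once $\Phi\in B_M$ is frozen, the map
\[
U\mapsto L_\Phi U:=H\cdot\nabla U+\mathcal N(\x;\Phi,U)-\mathcal N(\x;\Phi,0)
\]
is linear in $U$. Indeed, by \eqref{eq:frozen-nonlinearity}, $[\mathcal N(\x;\Phi,U)]_m=\sum_n (S_{mn}+M_{mn})(\x,\Phi)u_n+F_m(\x,\Phi)$. Hence $J_\Phi^{\lambda,\varepsilon}(U)$ is a sum of squared weighted norms of affine functions of $U$, plus $\varepsilon\|U\|^2_{[H^s(\Omega)]^{N+1}}$. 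In particular it is a continuous quadratic functional.

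I would proceed in the following steps.

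\textbf{Step 1 (boundedness of the frozen coefficients).} Since $s>d/2+1$, we have $\Phi\in[L^\infty(\Omega)]^{N+1}$ with $\|\Phi\|_{L^\infty}\le C_{\mathrm{emb}}M$. Assuming $c$, $f$, $\alpha$ are continuous, as is implicit, and using $c\ge\kappa_0$, the coefficients $S_{mn}(\cdot,\Phi)$, $M_{mn}(\cdot,\Phi)$, $F_m(\cdot,\Phi)$ lie in $L^\infty(\Omega)$. This follows because the integrands are bounded uniformly in $\x$ by constants depending on $M$, $T$, $N$, $\kappa_0$. Consequently $U\mapsto e^{\lambda\varphi_*}(H\cdot\nabla U+\mathcal N(\x;\Phi,U))$ is a continuous affine map $[H^s]^{N+1}\to [L^2(\Omega)]^{N+1}$. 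By the trace theorem, the boundary terms are continuous on $[H^s]^{N+1}$ as well.

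\textbf{Step 2 (existence).} $J:=J_\Phi^{\lambda,\varepsilon}$ is nonnegative. Take a minimizing sequence $U_k\in B_M$; it is bounded in $[H^s]^{N+1}$. Extract a subsequence converging weakly to some $U^*$, and note $U^*\in B_M$ because $B_M$ is weakly closed. Since $J$ is convex and continuous, it is weakly lower semicontinuous (Mazur). Therefore $J(U^*)\le\liminf J(U_k)=\inf_{B_M}J$, so $U^*$ is a minimizer. Alternatively, the compact embedding $H^s\hookrightarrow H^1$ together with compactness of traces gives strong convergence of every term except the Tikhonov one, which is weakly lsc as a norm.

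\textbf{Step 3 (uniqueness).} Write $J(U)=Q(U)+\varepsilon\|U\|_{H^s}^2$, where $Q$ is a convex quadratic (a sum of squares of affine maps). For $U_1\neq U_2$ and $\theta\in(0,1)$, the parallelogram identity gives
\[
J(\theta U_1+(1-\theta)U_2)\le\theta J(U_1)+(1-\theta)J(U_2)-\varepsilon\theta(1-\theta)\|U_1-U_2\|_{H^s}^2.
\]
So $J$ is strictly convex. If $U_1$ and $U_2$ were two distinct minimizers in the convex set $B_M$, their midpoint would lie in $B_M$ and give a strictly smaller value, which is a contradiction.

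The main obstacle I expect is Step 1: justifying that the frozen coefficients are bounded measurable functions of $\x$, so that the residual term is finite and continuous on $H^s$. This requires the continuity of $c$, $f$, $\alpha$ in $\x$ (or at least local boundedness), together with the $L^\infty$ control of $\Phi$ from the Sobolev embedding. I would state this mild regularity of $c$, $f$, $\alpha$ explicitly. Everything else is the standard direct method, with strict convexity coming solely from $\varepsilon>0$.
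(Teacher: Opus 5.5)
Your proposal is correct and follows the same route as the paper: the direct method on the weakly compact convex ball $B_M$, weak lower semicontinuity from the affine continuous frozen residual, the continuous trace terms and the norm, and uniqueness from the strict convexity supplied by the $\varepsilon$-Tikhonov term. Your Step 1 (bounded frozen coefficients via $H^s\hookrightarrow L^\infty$ and $c\ge\kappa_0$) and your explicit parallelogram inequality only spell out details that the paper's short proof leaves implicit.
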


\begin{proof}
This follows from the standard direct method. Indeed, $B_M$ is a
nonempty, convex, and weakly compact subset of
$[H^s(\Omega)]^{N+1}$. For fixed $\Phi$, the functional
$J_{\Phi}^{\lambda,\varepsilon}$ is weakly lower semicontinuous because
the frozen residual is affine and continuous in $U$, the trace map is
continuous, and the Hilbert norm is weakly lower semicontinuous.
Therefore, $J_{\Phi}^{\lambda,\varepsilon}$ attains its minimum on
$B_M$. Since $\varepsilon>0$, the Tikhonov term is strictly convex;
hence the minimizer is unique.
\end{proof}

By Proposition~\ref{prop:backward-minimizer}, the Picard map
\[
  \mathcal T^{\lambda,\varepsilon}:B_M\to B_M
\]
is well-defined by
\begin{equation}
  \label{eq:backward-picard-map}
  \mathcal T^{\lambda,\varepsilon}(\Phi)
  :=
  \operatorname*{argmin}_{U\in B_M}
  J_{\Phi}^{\lambda,\varepsilon}(U).
\end{equation}

\begin{lemma}[Lipschitz continuity of the nonlinearity]
\label{lem:nonlinearity-lipschitz}
Assume that
$c,f\in C^1(\overline\Omega\times\mathbb R)$,
$c$ satisfies \eqref{eq:nondegenerate-velocity}, and
$\alpha\in L^\infty(\Omega\times(0,T))$.
Then there exists $C_{\mathcal N}>0$, depending only on
$\Omega$, $s$, $M$, $N$, $T$, $\kappa_0$,
$\{\Psi_n\}_{n=0}^{N}$, $\|\alpha\|_{L^\infty(\Omega\times(0,T))}$,
and the relevant $C^1$ bounds of $c$ and $f$, such that, for all
$\Phi,\Psi,U,V\in B_M$ and almost every $\x\in\Omega$,
\begin{equation}
  |\mathcal N(\x;\Phi,U)-\mathcal N(\x;\Psi,V)|
  \leq
  C_{\mathcal N}
  \big(
  |\Phi(\x)-\Psi(\x)|
  +
  |U(\x)-V(\x)|
  \big).
  \label{eq:nonlinearity-lipschitz}
\end{equation}
\end{lemma}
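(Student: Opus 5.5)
The argument rests on pointwise bounds that come from the Sobolev embedding. Since $s>d/2+1$, every $V\in B_M$ satisfies $\|V\|_{[L^\infty(\Omega)]^{N+1}}\le C_{\mathrm{emb}}M$. The truncated state
\[
  \phi(\x,t):=\sum_{j=0}^N\phi_j(\x)\Psi_j(t)
\]
is therefore confined to a compact interval $[-R,R]$, with $R:=C_{\mathrm{emb}}M\sum_{j=0}^N\|\Psi_j\|_{L^\infty(0,T)}$. This holds for a.e.\ $\x$ and all $t\in[0,T]$. On the compact set $\overline\Omega\times[-R,R]$ we then have $c\ge\kappa_0$, and $c$, $\partial_sc$, $f$, $\partial_sf$ are bounded by their $C^1$ norms. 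Consequently the map $s\mapsto 1/c(\x,s)$ is Lipschitz on $[-R,R]$, with constant $\|\partial_sc\|_\infty/\kappa_0^2$ and uniform bound $1/\kappa_0$. Likewise $s\mapsto f/c$ is bounded and Lipschitz there, with constants depending only on $\kappa_0$ and the $C^1$ bounds of $c,f$ on this set.

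Next I split the difference:
\[
  \mathcal N(\x;\Phi,U)-\mathcal N(\x;\Psi,V)
  =\bigl[\mathcal N(\x;\Phi,U)-\mathcal N(\x;\Psi,U)\bigr]
  +\bigl[\mathcal N(\x;\Psi,U)-\mathcal N(\x;\Psi,V)\bigr].
\]
The second bracket is linear in $U-V$, with coefficients $S_{mn}(\x,\Psi)+M_{mn}(\x,\Psi)$. We have
\[
  |S_{mn}|\le\kappa_0^{-1}\int_0^Te^{-2t}|\Psi_n'\Psi_m|\,dt,
\]
and, writing $\int_0^t\alpha(\x,t-t')\Psi_n(t')\,dt'$ for the memory integral,
\[
  |M_{mn}|\le\kappa_0^{-1}\,T\,\|\alpha\|_\infty\,\|\Psi_n\|_\infty\int_0^Te^{-2t}|\Psi_m|\,dt .
\]
Summing over $m,n\le N$ (Cauchy--Schwarz in $n$) bounds the second bracket by $C|U(\x)-V(\x)|$.

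For the first bracket I estimate each ingredient. The $S$ part is bounded using
\[
  |S_{mn}(\x,\Phi)-S_{mn}(\x,\Psi)|
  \le \frac{\|\partial_sc\|_\infty}{\kappa_0^2}\int_0^Te^{-2t}|\Psi_n'\Psi_m|\,|\phi(\x,t)-\psi(\x,t)|\,dt ,
\]
together with the pointwise bound $|\phi(\x,t)-\psi(\x,t)|\le\bigl(\sum_j\|\Psi_j\|_\infty^2\bigr)^{1/2}|\Phi(\x)-\Psi(\x)|$. This difference then gets multiplied by $|u_n(\x)|\le C_{\mathrm{emb}}M$. The $M_{mn}$ part is handled identically. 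For $F_m$, the Lipschitz bound on $f/c$ gives $|F_m(\x,\Phi)-F_m(\x,\Psi)|\le C|\Phi(\x)-\Psi(\x)|$ directly. Collecting the finitely many terms, $m,n\le N$, yields \eqref{eq:nonlinearity-lipschitz}, with $C_{\mathcal N}$ depending only on the quantities listed.

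The main obstacle is making the frozen coefficients uniformly Lipschitz. This requires both the uniform $L^\infty$ bound on $U$ in the first bracket and the restriction of $c,f$ to a compact range of arguments. Both come from the embedding $H^s\hookrightarrow L^\infty$ on $B_M$; this is where the dependence of $C_{\mathcal N}$ on $M$ and $s$ enters. It is also why only $C^1$ bounds of $c,f$ on $\overline\Omega\times[-R,R]$, rather than global ones, are needed.
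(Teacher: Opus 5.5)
Your proposal is correct and follows the same route as the paper. The embedding $H^s\hookrightarrow L^\infty$ confines $\Phi(\x)$, $U(\x)$, and hence the truncated state, to a fixed compact set. On that set, $c\ge\kappa_0$ and the $C^1$ bounds of $c,f$ give a uniform Lipschitz constant through the mean value theorem. The paper compresses this into a single step: it bounds the derivative of $\mathcal N$ with respect to $(\Phi,U)$. You instead split the difference into the frozen-coefficient part and the part linear in $U-V$, and estimate $1/c$, $f/c$, $S_{mn}$, $M_{mn}$ and $F_m$ explicitly. This only makes the constants, and the use of $|u_n(\x)|\le C_{\mathrm{emb}}M$, visible.
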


\begin{proof}
Since $s>d/2+1$, the continuous Sobolev embedding gives
\[
  \|V\|_{[C^0(\overline\Omega)]^{N+1}}
  \leq C_{\mathrm{emb}}M,
  \qquad V\in B_M.
\]
Thus the coefficient vectors at which $\mathcal N$ is evaluated remain
in a fixed bounded subset of $\mathbb R^{N+1}$. Since $\mathcal N$
contains no spatial derivatives and is continuously differentiable
with respect to the coefficient vectors $\Phi$ and $U$, its derivative
is uniformly bounded on this set. The mean value theorem yields
\eqref{eq:nonlinearity-lipschitz}.
\end{proof}

The preceding estimates allow us to prove that the Carleman--Picard
map is contractive on the admissible set for sufficiently large
Carleman parameter.
We equip $B_M$ with the weighted norm
\begin{equation}
  \label{eq:backward-carleman-norm}
  \|Z\|^2_{\lambda,\varepsilon,\varphi_*}
  :=
  \int_\Omega e^{2\lambda\varphi_*}|Z|^2\,d\x
  +
  \int_{\Gamma_+}
  e^{2\lambda\varphi_*}|Z|^2\,dS
  +
  \frac{\varepsilon}{\lambda}
  \|Z\|^2_{[H^s(\Omega)]^{N+1}}.
\end{equation}
For fixed $\lambda>0$ and $\varepsilon>0$, this norm is equivalent to the
standard $[H^s(\Omega)]^{N+1}$ norm. Therefore it induces an equivalent
metric on $B_M$.

\begin{theorem}
  \label{thm:backward-contraction}
  Suppose that the hypotheses of Corollary~\ref{cor:system-carleman}
  hold, and let $B_M$ be the closed ball defined in
  \eqref{eq:closed-ball}. Assume also that the hypotheses of
  Lemma~\ref{lem:nonlinearity-lipschitz} hold.
  Then there exist constants $\lambda_1>0$ and $C>0$, depending only on
  $\Omega$, $H$, $\varphi_*$, $\mu_0$, $N$, $M$, $s$, and
  $C_{\mathcal N}$, such that, for every fixed $\varepsilon>0$, all
  $\lambda\geq\lambda_1$, and all $\Phi,\Psi\in B_M$,
  \begin{equation}
    \left\|
      \mathcal T^{\lambda,\varepsilon}(\Phi)
      -
      \mathcal T^{\lambda,\varepsilon}(\Psi)
    \right\|_{\lambda,\varepsilon,\varphi_*}
    \leq
    \sqrt{\frac{C}{\lambda}}\,
    \|\Phi-\Psi\|_{\lambda,\varepsilon,\varphi_*}.
    \label{eq:backward-contraction}
  \end{equation}
  In particular, if
  \[
      \lambda>\max\{\lambda_1,C\},
  \]
  then $\mathcal T^{\lambda,\varepsilon}$ is a strict contraction on
  $B_M$.
\end{theorem}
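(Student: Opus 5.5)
Let $U_1=\mathcal T^{\lambda,\varepsilon}(\Phi)$ and $U_2=\mathcal T^{\lambda,\varepsilon}(\Psi)$, and set $Z=U_1-U_2$. The proof combines the first-order optimality conditions on the convex set $B_M$ with the Carleman estimate. Since $J_{\Phi}^{\lambda,\varepsilon}$ is a convex quadratic functional and $U_1$ minimizes it over the convex set $B_M$, the variational inequality
\[
\langle DJ_{\Phi}^{\lambda,\varepsilon}(U_1),\,W-U_1\rangle\ge 0 \qquad \text{for all } W\in B_M
\]
holds. Similarly, $\langle DJ_{\Psi}^{\lambda,\varepsilon}(U_2),W-U_2\rangle\ge0$. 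We take $W=U_2$ in the first inequality and $W=U_1$ in the second, then add the two. Writing $L U:=H\cdot\nabla U$, this gives
\begin{multline*}
\int_\Omega e^{2\lambda\varphi_*}\bigl(LU_1+\mathcal N(\x;\Phi,U_1)-LU_2-\mathcal N(\x;\Psi,U_2)\bigr)\cdot \bigl(LZ+\mathcal N(\x;\Phi,Z)-\mathcal N(\x;\Phi,0)\bigr)\,d\x\\
+\lambda^2\int_{\partial\Omega}e^{2\lambda\varphi_*}|Z|^2\,dS+\varepsilon\|Z\|^2_{[H^s(\Omega)]^{N+1}}\le R,
\end{multline*}
where $R$ collects the cross terms. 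These arise because the linear parts of the two frozen residuals differ: the test direction for $\Psi$ involves $\mathcal N(\x;\Psi,\cdot)$ rather than $\mathcal N(\x;\Phi,\cdot)$. The boundary data terms cancel exactly, since $G$ and $D^{\mathrm{obs}}$ are the same in both functionals.

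A cleaner way to organize this step is as follows. Set $E:=LZ+\mathcal N(\x;\Phi,U_1)-\mathcal N(\x;\Psi,U_2)$. Using Lemma~\ref{lem:nonlinearity-lipschitz}, the difference of the residuals equals $LZ$ plus a pointwise perturbation bounded by $C_{\mathcal N}(|\Phi-\Psi|+|Z|)$. The summed variational inequalities then yield
\[
\int_\Omega e^{2\lambda\varphi_*}|LZ|^2\,d\x+\lambda^2\int_{\partial\Omega}e^{2\lambda\varphi_*}|Z|^2\,dS+\varepsilon\|Z\|^2_{H^s}
\le C\int_\Omega e^{2\lambda\varphi_*}\bigl(|Z|^2+|\Phi-\Psi|^2\bigr)\,d\x .
\]
This is the main obstacle. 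The derivatives of $\mathcal N(\x;\Phi,\cdot)$ in the $U$-slot and of $\mathcal N(\x;\Psi,\cdot)$ differ, and the residuals $LU_i+\mathcal N(\cdot)$ are not small. To handle this, I would bound the residuals a priori in the weighted $L^2$ norm, using that $U_i,\Phi,\Psi\in B_M$ are uniformly bounded in $W^{1,\infty}$, and use the Lipschitz bound on the $U$-coefficient matrices $S_{mn},M_{mn}$ with respect to $\Phi$. If this uniform bound introduces a factor $e^{2\lambda\max\varphi_*}$, I would instead follow the Carleman contraction literature: treat the $U$-linear part at a common frozen state and absorb the discrepancy using Cauchy--Schwarz with a small parameter. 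Every term is then either absorbed into the left side or bounded by $C\int e^{2\lambda\varphi_*}(|Z|^2+|\Phi-\Psi|^2)$.

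Next, Corollary~\ref{cor:system-carleman} applied to $Z$ gives
\[
\lambda\int_\Omega e^{2\lambda\varphi_*}|Z|^2\le C\int_\Omega e^{2\lambda\varphi_*}|LZ|^2+C\lambda\int_{\partial\Omega}e^{2\lambda\varphi_*}|Z|^2 .
\]
The boundary term here is dominated by the $\lambda^2$ boundary term on the left of the energy inequality once $\lambda\ge C$. Combining the two inequalities, we obtain
\[
\lambda\int_\Omega e^{2\lambda\varphi_*}|Z|^2+\lambda^2\int_{\Gamma_+}e^{2\lambda\varphi_*}|Z|^2+\varepsilon\|Z\|^2_{H^s}\le C\int_\Omega e^{2\lambda\varphi_*}\bigl(|Z|^2+|\Phi-\Psi|^2\bigr).
\]
For $\lambda\ge\lambda_1$ with $\lambda_1\ge 2C$, we absorb $C\int e^{2\lambda\varphi_*}|Z|^2$ into the left side. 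Dividing by $\lambda$ and using $\lambda\ge1$ on the boundary term, we get
\[
\|Z\|^2_{\lambda,\varepsilon,\varphi_*}\le \frac{C}{\lambda}\int_\Omega e^{2\lambda\varphi_*}|\Phi-\Psi|^2\le\frac{C}{\lambda}\|\Phi-\Psi\|^2_{\lambda,\varepsilon,\varphi_*},
\]
which is \eqref{eq:backward-contraction}. Strict contraction follows when $\lambda>\max\{\lambda_1,C\}$, since then $C/\lambda<1$.
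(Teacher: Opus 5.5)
Your outline follows the paper's proof essentially step for step: subtract the two variational inequalities on $B_M$, control the interior terms via Lemma~\ref{lem:nonlinearity-lipschitz} to reach exactly your energy bound, then apply Corollary~\ref{cor:system-carleman}, absorb for large $\lambda$, and divide by $\lambda$. The obstacle you flag is resolved there by your first option, taken pointwise rather than in weighted $L^2$: since $B_M\subset W^{1,\infty}$, $|H\cdot\nabla U_1+\mathcal N(\x;\Phi,U_1)|\le C$, and affinity of $\mathcal N$ in its last argument together with Lipschitz dependence of $S_{mn},M_{mn}$ on the frozen state gives $|\mathcal N(\x;\Psi,U_1)-\mathcal N(\x;\Psi,U_2)-\mathcal N(\x;\Phi,U_1)+\mathcal N(\x;\Phi,U_2)|\le C|\Phi-\Psi|\,|Z|$, so the weight multiplies only $|\Phi-\Psi|\,|Z|$, Young's inequality splits it, and no factor $e^{2\lambda\max\varphi_*}$ or fallback argument is needed.
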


\begin{proof}
Throughout the proof, $C>0$ denotes a generic constant depending only
on the parameters listed in the statement. Set
$U=\mathcal T^{\lambda,\varepsilon}(\Phi)$,
$V=\mathcal T^{\lambda,\varepsilon}(\Psi)$, and $W=U-V$.

Since $U$ minimizes $J_\Phi^{\lambda,\varepsilon}$ over $B_M$ and
$U-\theta W=(1-\theta)U+\theta V\in B_M$ for $0<\theta<1$, we obtain
\begin{multline}
  \int_\Omega e^{2\lambda\varphi_*}
  [H\cdot\nabla U+\mathcal N(\x;\Phi,U)]\cdot
  [H\cdot\nabla W+\mathcal N(\x;\Phi,U)-\mathcal N(\x;\Phi,V)]\,d\x
  \\
  +\lambda^2\int_{\Gamma_-}e^{2\lambda\varphi_*}(U-G)\cdot W\,dS
  +\lambda^2\int_{\Gamma_+}e^{2\lambda\varphi_*}
  (U-D^{\mathrm{obs}})\cdot W\,dS
  +\varepsilon\langle U,W\rangle_{[H^s]^{N+1}}\leq0.
  \label{eq:opt-U}
\end{multline}
Similarly, using $V+\theta W=(1-\theta)V+\theta U\in B_M$, we obtain
\begin{multline}
  \int_\Omega e^{2\lambda\varphi_*}
  [H\cdot\nabla V+\mathcal N(\x;\Psi,V)]\cdot
  [H\cdot\nabla W+\mathcal N(\x;\Psi,U)-\mathcal N(\x;\Psi,V)]\,d\x
  \\
  +\lambda^2\int_{\Gamma_-}e^{2\lambda\varphi_*}(V-G)\cdot W\,dS
  +\lambda^2\int_{\Gamma_+}e^{2\lambda\varphi_*}
  (V-D^{\mathrm{obs}})\cdot W\,dS
  +\varepsilon\langle V,W\rangle_{[H^s]^{N+1}}\geq0.
  \label{eq:opt-V}
\end{multline}
Subtracting \eqref{eq:opt-U} from \eqref{eq:opt-V} gives
\begin{multline}
  \int_\Omega e^{2\lambda\varphi_*}
  [H\cdot\nabla V+\mathcal N(\x;\Psi,V)]\cdot
  [H\cdot\nabla W+\mathcal N(\x;\Psi,U)-\mathcal N(\x;\Psi,V)]\,d\x
  \\
  -\int_\Omega e^{2\lambda\varphi_*}
  [H\cdot\nabla U+\mathcal N(\x;\Phi,U)]\cdot
  [H\cdot\nabla W+\mathcal N(\x;\Phi,U)-\mathcal N(\x;\Phi,V)]\,d\x
  \\
  \geq\lambda^2\int_{\partial\Omega}e^{2\lambda\varphi_*}|W|^2\,dS
  +\varepsilon\|W\|^2_{[H^s]^{N+1}}.
  \label{eq:pre-carleman}
\end{multline}

Set $A=H\cdot\nabla W+\mathcal N(\x;\Phi,U)-\mathcal N(\x;\Phi,V)$,
$P=\mathcal N(\x;\Psi,V)-\mathcal N(\x;\Phi,V)$, and
$Q=\mathcal N(\x;\Psi,U)-\mathcal N(\x;\Psi,V)
-\mathcal N(\x;\Phi,U)+\mathcal N(\x;\Phi,V)$.
The difference of the interior integrands in \eqref{eq:pre-carleman} is
$-|A|^2+[H\cdot\nabla U+\mathcal N(\x;\Phi,U)]\cdot Q
-A\cdot Q+P\cdot A+P\cdot Q$.
Lemma~\ref{lem:nonlinearity-lipschitz}, the affine dependence of
$\mathcal N$ on its last argument, and the boundedness of $B_M$ give
$|P|\leq C|\Phi-\Psi|$, $|Q|\leq C|\Phi-\Psi||W|$, and
$|H\cdot\nabla U+\mathcal N(\x;\Phi,U)|\leq C$.
Using $|a\cdot b|\leq\delta_1|a|^2+(4\delta_1)^{-1}|b|^2$ and choosing
$\delta_1>0$ sufficiently small, we obtain
\begin{multline}
  \int_\Omega e^{2\lambda\varphi_*}
  |H\cdot\nabla W+\mathcal N(\x;\Phi,U)-\mathcal N(\x;\Phi,V)|^2\,d\x
  +\lambda^2\int_{\partial\Omega}e^{2\lambda\varphi_*}|W|^2\,dS
  \\
  +\varepsilon\|W\|^2_{[H^s]^{N+1}}
  \leq C\int_\Omega e^{2\lambda\varphi_*}
  (|\Phi-\Psi|^2+|W|^2)\,d\x.
  \label{eq:pre-carleman-estimate}
\end{multline}
Using $|a+b|^2\geq\frac12|a|^2-|b|^2$, we deduce
\begin{multline}
  \int_\Omega e^{2\lambda\varphi_*}|H\cdot\nabla W|^2\,d\x
  +\lambda^2\int_{\partial\Omega}e^{2\lambda\varphi_*}|W|^2\,dS
  +\varepsilon\|W\|^2_{[H^s]^{N+1}}
  \\
  \leq C\int_\Omega e^{2\lambda\varphi_*}(|\Phi-\Psi|^2+|W|^2)\,d\x
  +C\int_\Omega e^{2\lambda\varphi_*}
  |\mathcal N(\x;\Phi,U)-\mathcal N(\x;\Phi,V)|^2\,d\x.
  \label{eq:pre-carleman-principal}
\end{multline}
Lemma~\ref{lem:nonlinearity-lipschitz} therefore gives
\begin{multline}
  \int_\Omega e^{2\lambda\varphi_*}|H\cdot\nabla W|^2\,d\x
  +\lambda^2\int_{\partial\Omega}e^{2\lambda\varphi_*}|W|^2\,dS
  +\varepsilon\|W\|^2_{[H^s]^{N+1}}
  \leq C\int_\Omega e^{2\lambda\varphi_*}
  (|\Phi-\Psi|^2+|W|^2)\,d\x.
  \label{eq:pre-carleman-lipschitz}
\end{multline}

For $\lambda\geq\lambda_0$, Corollary~\ref{cor:system-carleman} gives
\begin{equation}
  \lambda\int_\Omega e^{2\lambda\varphi_*}|W|^2\,d\x
  \leq C\int_\Omega e^{2\lambda\varphi_*}|H\cdot\nabla W|^2\,d\x
  +C\lambda\int_{\partial\Omega}e^{2\lambda\varphi_*}|W|^2\,dS.
  \label{eq:carleman-W}
\end{equation}
Taking $\lambda_1\geq\max\{\lambda_0,1\}$ and combining the last two
estimates, we obtain
\begin{multline}
  \lambda\int_\Omega e^{2\lambda\varphi_*}|W|^2\,d\x
  +\int_\Omega e^{2\lambda\varphi_*}|H\cdot\nabla W|^2\,d\x
  +\lambda^2\int_{\partial\Omega}e^{2\lambda\varphi_*}|W|^2\,dS
  \\
  +\varepsilon\|W\|^2_{[H^s]^{N+1}}
  \leq C\int_\Omega e^{2\lambda\varphi_*}|\Phi-\Psi|^2\,d\x
  +C\int_\Omega e^{2\lambda\varphi_*}|W|^2\,d\x.
  \label{eq:absorb-pre}
\end{multline}
For sufficiently large $\lambda_1$, the last term is absorbed, and the
nonnegative derivative term may be omitted. Hence
\begin{equation}
  \lambda\int_\Omega e^{2\lambda\varphi_*}|W|^2\,d\x
  +\lambda^2\int_{\partial\Omega}e^{2\lambda\varphi_*}|W|^2\,dS
  +\varepsilon\|W\|^2_{[H^s]^{N+1}}
  \leq C\int_\Omega e^{2\lambda\varphi_*}|\Phi-\Psi|^2\,d\x.
  \label{eq:W-L2}
\end{equation}
Dividing by $\lambda$, using $\lambda\geq1$, and adding the nonnegative
terms $\frac{C}{\lambda}\int_{\Gamma_+}e^{2\lambda\varphi_*}
|\Phi-\Psi|^2\,dS$ and
$\frac{C\varepsilon}{\lambda^2}\|\Phi-\Psi\|^2_{[H^s]^{N+1}}$ to the
right-hand side, we get
\begin{multline}
  \int_\Omega e^{2\lambda\varphi_*}|W|^2\,d\x
  +\int_{\Gamma_+}e^{2\lambda\varphi_*}|W|^2\,dS
  +\frac{\varepsilon}{\lambda}\|W\|^2_{[H^s]^{N+1}}
  \\
  \leq\frac{C}{\lambda}\big(
  \int_\Omega e^{2\lambda\varphi_*}|\Phi-\Psi|^2\,d\x
  +\int_{\Gamma_+}e^{2\lambda\varphi_*}|\Phi-\Psi|^2\,dS
  +\frac{\varepsilon}{\lambda}\|\Phi-\Psi\|^2_{[H^s]^{N+1}}
  \big).
  \label{eq:contraction-pre}
\end{multline}
Thus
$\|U-V\|_{\lambda,\varepsilon,\varphi_*}
\leq\sqrt{C/\lambda}\,
\|\Phi-\Psi\|_{\lambda,\varepsilon,\varphi_*}$, proving
\eqref{eq:backward-contraction}.
\end{proof}

As a direct consequence, the Picard sequence converges globally within
$B_M$.
\begin{corollary}
  \label{cor:backward-picard}
  Let $\varepsilon>0$ and choose $\lambda\geq\lambda_1$ so that
  $\mu:=\sqrt{C/\lambda}<1$. For arbitrary $U^{(0)}\in B_M$, define
  $U^{(k+1)}=\mathcal T^{\lambda,\varepsilon}(U^{(k)})$.
  Then $U^{(k)}$ converges in
  $\|\cdot\|_{\lambda,\varepsilon,\varphi_*}$ to the unique fixed point
  $U^{\lambda,\varepsilon}\in B_M$, and
  \[
    \|U^{(k)}-U^{\lambda,\varepsilon}\|_{\lambda,\varepsilon,\varphi_*}
    \leq\mu^k
    \|U^{(0)}-U^{\lambda,\varepsilon}\|_{\lambda,\varepsilon,\varphi_*}.
  \]
\end{corollary}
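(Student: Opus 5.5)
This is the Banach fixed-point theorem applied to $\mathcal T^{\lambda,\varepsilon}$ on the metric space $(B_M,d)$, where $d(Z_1,Z_2):=\|Z_1-Z_2\|_{\lambda,\varepsilon,\varphi_*}$. The proof has three steps.

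\emph{Step 1: completeness.} For fixed $\lambda\ge1$ and $\varepsilon>0$, the norm \eqref{eq:backward-carleman-norm} is equivalent to the $[H^s(\Omega)]^{N+1}$ norm. The lower bound comes from the term $\frac{\varepsilon}{\lambda}\|Z\|^2_{[H^s]^{N+1}}$. The upper bound holds because $\varphi_*$ is bounded on $\overline\Omega$, so $e^{2\lambda\varphi_*}$ is bounded. The trace $H^s(\Omega)\to L^2(\Gamma_+)$ is continuous since $s>d/2+1>1/2$. Because $B_M$ is norm-closed in the Hilbert space $[H^s(\Omega)]^{N+1}$, the space $(B_M,d)$ is complete.

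\emph{Step 2: contraction and the Cauchy property.} Proposition~\ref{prop:backward-minimizer} guarantees that $\mathcal T^{\lambda,\varepsilon}$ maps $B_M$ into itself, so the sequence $U^{(k)}$ stays in $B_M$. Theorem~\ref{thm:backward-contraction} with $\lambda\ge\lambda_1$ and $\mu=\sqrt{C/\lambda}<1$ gives
\[
d(U^{(k+1)},U^{(k)})\le\mu^k d(U^{(1)},U^{(0)}).
\]
Summing the geometric series,
\[
d(U^{(k+p)},U^{(k)})\le\frac{\mu^k}{1-\mu}\,d(U^{(1)},U^{(0)}),
\]
so $\{U^{(k)}\}$ is Cauchy and converges to some $U^{\lambda,\varepsilon}\in B_M$.

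\emph{Step 3: fixed point, uniqueness and rate.} The map $\mathcal T^{\lambda,\varepsilon}$ is Lipschitz, hence continuous, with respect to $d$. Passing to the limit in $U^{(k+1)}=\mathcal T^{\lambda,\varepsilon}(U^{(k)})$ gives $U^{\lambda,\varepsilon}=\mathcal T^{\lambda,\varepsilon}(U^{\lambda,\varepsilon})$.

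For uniqueness, suppose $\widetilde U$ is another fixed point. Then $d(U^{\lambda,\varepsilon},\widetilde U)\le\mu\, d(U^{\lambda,\varepsilon},\widetilde U)$, and since $\mu<1$ this forces $d(U^{\lambda,\varepsilon},\widetilde U)=0$.

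For the rate, since $U^{\lambda,\varepsilon}$ is a fixed point,
\[
d(U^{(k)},U^{\lambda,\varepsilon})=d\big(\mathcal T^{\lambda,\varepsilon}U^{(k-1)},\mathcal T^{\lambda,\varepsilon}U^{\lambda,\varepsilon}\big)\le\mu\, d(U^{(k-1)},U^{\lambda,\varepsilon}).
\]
Induction on $k$ gives the stated estimate $\mu^k\|U^{(0)}-U^{\lambda,\varepsilon}\|_{\lambda,\varepsilon,\varphi_*}$.

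The only step that needs any care is the completeness in Step 1. The point to check is that the $\Gamma_+$ boundary term and the exponential weights in \eqref{eq:backward-carleman-norm} are controlled by the $H^s$ norm with constants that may depend on $\lambda$. That is acceptable here because $\lambda$ and $\varepsilon$ are fixed throughout the iteration.
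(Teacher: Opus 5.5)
Your proposal is correct and matches the paper's argument. The paper simply invokes Theorem~\ref{thm:backward-contraction} together with the Banach fixed-point theorem, using that $B_M$ is complete under the metric induced by $\|\cdot\|_{\lambda,\varepsilon,\varphi_*}$, which is equivalent to the $[H^s(\Omega)]^{N+1}$ norm for fixed $\lambda$ and $\varepsilon$; you have only written out the completeness check and the standard contraction-mapping steps that the paper leaves implicit.
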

\begin{proof}
This follows from Theorem~\ref{thm:backward-contraction} and the Banach
fixed-point theorem because $B_M$ is complete in the equivalent weighted
metric.
\end{proof}

\begin{Remark}
Once a fixed point
\[
  U=(u_0,\ldots,u_N)^\top
\]
of the Carleman--Picard map has been computed, the truncated initial
condition is reconstructed by
\begin{equation}
  u^{0,N}(\x)
  =
  \sum_{n=0}^{N}u_n(\x)\Psi_n(0).
  \label{eq:initial-reconstruction}
\end{equation}
\end{Remark}

\section{Stability with respect to noisy data}
\label{sec:stability}
Assume the hypotheses of Theorem~\ref{thm:backward-contraction}. Let
$U^*\in B_M$ satisfy
$H\cdot\nabla U^*+\mathcal N(\x;U^*,U^*)=0$ in $\Omega$ and
$U^*|_{\Gamma_-}=G$, and define the exact projected outflow data by
$D^*:=U^*|_{\Gamma_+}$. We model the measurement error as
multiplicative noise and assume
\begin{equation}
  \|D^{\mathrm{obs}}-D^*\|_{[L^2(\Gamma_+)]^{N+1}}
  \leq\delta\|D^*\|_{[L^2(\Gamma_+)]^{N+1}},
  \qquad 0<\delta<1.
  \label{eq:multiplicative-noise}
\end{equation}

\begin{theorem}[Stability with respect to noisy data]
\label{thm:backward-stability}
Let $\mathcal T^{\lambda,\varepsilon}:B_M\to B_M$ be the
Carleman--Picard map defined in \eqref{eq:backward-picard-map}, with
$D^{\mathrm{obs}}$ satisfying \eqref{eq:multiplicative-noise}. Then
$\mathcal T^{\lambda,\varepsilon}$ is a contraction on $B_M$ for all
sufficiently large $\lambda$. Its fixed point
$U^{\lambda,\varepsilon,\delta}\in B_M$ satisfies
\begin{multline}
  \int_\Omega e^{2\lambda\varphi_*}
  |U^{\lambda,\varepsilon,\delta}-U^*|^2\,d\x
  +\lambda\int_{\partial\Omega}e^{2\lambda\varphi_*}
  |U^{\lambda,\varepsilon,\delta}-U^*|^2\,dS
  \\
  +\frac{\varepsilon}{\lambda}
  \|U^{\lambda,\varepsilon,\delta}-U^*\|^2_{[H^s(\Omega)]^{N+1}}
  \leq
  \frac{C\varepsilon}{\lambda}\|U^*\|^2_{[H^s(\Omega)]^{N+1}}
  +C\lambda\int_{\Gamma_+}e^{2\lambda\varphi_*}
  |D^{\mathrm{obs}}-D^*|^2\,dS.
  \label{eq:backward-stability}
\end{multline}
\end{theorem}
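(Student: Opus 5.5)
The contraction claim follows directly from Theorem~\ref{thm:backward-contraction}. That proof never uses the specific boundary data: the terms involving $G$ and $D^{\mathrm{obs}}$ cancel when the two variational inequalities are subtracted. Hence the same $\lambda_1$ and $C$ work for noisy data, and Corollary~\ref{cor:backward-picard} yields a unique fixed point $U:=U^{\lambda,\varepsilon,\delta}\in B_M$.

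For the estimate, I will compare $U$ with $U^*$ through a variational inequality. Because $U$ is the fixed point, it minimizes $J_U^{\lambda,\varepsilon}$ over $B_M$. Since $U^*\in B_M$ and $B_M$ is convex, I take $U+\theta(U^*-U)$ with $\theta\in(0,1)$ and differentiate at $\theta=0$. With $W:=U-U^*$, this gives
\begin{multline*}
\int_\Omega e^{2\lambda\varphi_*}[H\cdot\nabla U+\mathcal N(\x;U,U)]\cdot[H\cdot\nabla W+\mathcal N(\x;U,U)-\mathcal N(\x;U,U^*)]\,d\x
+\lambda^2\int_{\Gamma_-}e^{2\lambda\varphi_*}W\cdot W\,dS\\
+\lambda^2\int_{\Gamma_+}e^{2\lambda\varphi_*}(U-D^{\mathrm{obs}})\cdot W\,dS
+\varepsilon\langle U,W\rangle_{[H^s]^{N+1}}\le0.
\end{multline*}
Here I used $U-G=W$ on $\Gamma_-$. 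The frozen map $V\mapsto\mathcal N(\x;U,V)$ is affine, which is why the difference of $\mathcal N$ terms appears in the second factor.

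I then insert the exact equation $H\cdot\nabla U^*+\mathcal N(\x;U^*,U^*)=0$ by writing
\[
H\cdot\nabla U+\mathcal N(\x;U,U)=A+P,
\qquad
A:=H\cdot\nabla W+\mathcal N(\x;U,U)-\mathcal N(\x;U,U^*),
\qquad
P:=\mathcal N(\x;U,U^*)-\mathcal N(\x;U^*,U^*).
\]
By Lemma~\ref{lem:nonlinearity-lipschitz}, $|P|\le C|W|$, so the interior integrand equals $|A|^2+P\cdot A\ge\frac12|A|^2-C|W|^2$.

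On $\Gamma_+$ I write $U-D^{\mathrm{obs}}=W+(D^*-D^{\mathrm{obs}})$. The term $\lambda^2|W|^2$ stays on the left. The cross term is bounded by $\frac{\lambda^2}{2}|W|^2+\frac{\lambda^2}{2}|D^{\mathrm{obs}}-D^*|^2$.

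For the Tikhonov term, $\langle U,W\rangle=\|W\|^2+\langle U^*,W\rangle\ge\frac12\|W\|^2-\frac12\|U^*\|^2$.

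Collecting these bounds gives
\[
\int_\Omega e^{2\lambda\varphi_*}|A|^2\,d\x+\lambda^2\int_{\partial\Omega}e^{2\lambda\varphi_*}|W|^2\,dS+\varepsilon\|W\|_{[H^s]^{N+1}}^2\le C\int_\Omega e^{2\lambda\varphi_*}|W|^2\,d\x+C\varepsilon\|U^*\|_{[H^s]^{N+1}}^2+C\lambda^2\int_{\Gamma_+}e^{2\lambda\varphi_*}|D^{\mathrm{obs}}-D^*|^2\,dS.
\]

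The remaining step follows the proof of Theorem~\ref{thm:backward-contraction}. Using $|a+b|^2\ge\frac12|a|^2-|b|^2$ and the Lipschitz lemma, I replace $|A|^2$ by $|H\cdot\nabla W|^2$, which adds only $C|W|^2$ to the right. Corollary~\ref{cor:system-carleman} then adds $\lambda\int_\Omega e^{2\lambda\varphi_*}|W|^2$ to the left. Its boundary term $C\lambda\int_{\partial\Omega}$ is dominated by the $\lambda^2$ boundary term once $\lambda$ is large. For $\lambda$ large the $C\int_\Omega e^{2\lambda\varphi_*}|W|^2$ on the right is absorbed, and dividing by $\lambda$ yields \eqref{eq:backward-stability}.

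The step that needs the most care is the absorption. The Lipschitz error $P$ involves the frozen argument, and I must ensure it only contributes $C|W|^2$ with $C$ independent of $\lambda$. This holds by Lemma~\ref{lem:nonlinearity-lipschitz}, since $U,U^*\in B_M$. Assumption~\eqref{eq:multiplicative-noise} is not needed for the inequality; it only lets one bound the final term by $C\lambda e^{2\lambda\max\varphi_*}\delta^2\|D^*\|^2$.
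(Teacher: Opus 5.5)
Your proposal is correct and follows essentially the same route as the paper. You test the minimality of the fixed point against $U^*$, then use the exact equation and boundary conditions for $U^*$; the paper does this by subtracting the vanishing identity for $U^*$, which is equivalent to your $A+P$ splitting. After that you bound the nonlinear differences with Lemma~\ref{lem:nonlinearity-lipschitz} and Young's inequality, apply Corollary~\ref{cor:system-carleman}, absorb for large $\lambda$, and divide by $\lambda$.

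Your explicit justification of the contraction claim (the data terms cancel in the proof of Theorem~\ref{thm:backward-contraction}) is accurate. So is your remark that \eqref{eq:multiplicative-noise} is needed only to rewrite the final data term.
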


\begin{proof}
Set $U^\delta=U^{\lambda,\varepsilon,\delta}$ and $W=U^\delta-U^*$.
Since $U^\delta$ is a fixed point of
$\mathcal T^{\lambda,\varepsilon}$, it minimizes
$J_{U^\delta}^{\lambda,\varepsilon}$ over $B_M$. Moreover,
$U^\delta-\theta W=(1-\theta)U^\delta+\theta U^*\in B_M$ for
$0<\theta<1$. Proceeding as in the derivation of \eqref{eq:opt-U}, with
$\Phi=U^\delta$, we obtain
\begin{multline}
  \int_\Omega e^{2\lambda\varphi_*}
  [H\cdot\nabla U^\delta+\mathcal N(\x;U^\delta,U^\delta)]\cdot
  [H\cdot\nabla W+\mathcal N(\x;U^\delta,U^\delta)
  -\mathcal N(\x;U^\delta,U^*)]\,d\x
  \\
  +\lambda^2\int_{\Gamma_-}e^{2\lambda\varphi_*}
  (U^\delta-G)\cdot W\,dS
  +\lambda^2\int_{\Gamma_+}e^{2\lambda\varphi_*}
  (U^\delta-D^{\mathrm{obs}})\cdot W\,dS
  \\
  +\varepsilon\langle U^\delta,W\rangle_{[H^s]^{N+1}}
  \leq0.
  \label{eq:stability-variational}
\end{multline}
On the other hand, the equation and exact boundary conditions for
$U^*$ imply
\begin{multline}
  \int_\Omega e^{2\lambda\varphi_*}
  [H\cdot\nabla U^*+\mathcal N(\x;U^*,U^*)]\cdot
  [H\cdot\nabla W+\mathcal N(\x;U^\delta,U^\delta)
  -\mathcal N(\x;U^\delta,U^*)],d\x
  \\
  +\lambda^2\int_{\Gamma_-}e^{2\lambda\varphi_*}
  (U^*-G)\cdot W\,dS
  +\lambda^2\int_{\Gamma_+}e^{2\lambda\varphi_*}
  (U^*-D^*)\cdot W\,dS
  =0.
  \label{eq:exact-state-identity}
\end{multline}
Subtracting \eqref{eq:exact-state-identity} from
\eqref{eq:stability-variational}, using
$U^\delta-U^*=W$, and applying
Lemma~\ref{lem:nonlinearity-lipschitz} and Young's inequality, we obtain
\begin{multline}
  \int_\Omega e^{2\lambda\varphi_*}|H\cdot\nabla W|^2\,d\x
  +\lambda^2\int_{\partial\Omega}e^{2\lambda\varphi_*}|W|^2\,dS
  +\varepsilon\|W\|^2_{[H^s]^{N+1}}
  \\
  \leq C\int_\Omega e^{2\lambda\varphi_*}|W|^2\,d\x
  +C\lambda^2\int_{\Gamma_+}e^{2\lambda\varphi_*}
  |D^{\mathrm{obs}}-D^*|^2\,dS
  +C\varepsilon\|U^*\|^2_{[H^s]^{N+1}}.
  \label{eq:stability-pre-carleman}
\end{multline}
The Carleman estimate and absorption for sufficiently large $\lambda$
yield
\begin{multline*}
  \lambda\int_\Omega e^{2\lambda\varphi_*}|W|^2\,d\x
  +\lambda^2\int_{\partial\Omega}e^{2\lambda\varphi_*}|W|^2\,dS
  +\varepsilon\|W\|^2_{[H^s]^{N+1}}
  \\
  \leq C\lambda^2\int_{\Gamma_+}e^{2\lambda\varphi_*}
  |D^{\mathrm{obs}}-D^*|^2\,dS
  +C\varepsilon\|U^*\|^2_{[H^s]^{N+1}}.
\end{multline*}
Dividing by $\lambda$, we obtain
\begin{multline*}
  \int_\Omega e^{2\lambda\varphi_*}|W|^2\,d\x
  +\lambda\int_{\partial\Omega}e^{2\lambda\varphi_*}|W|^2\,dS
  +\frac{\varepsilon}{\lambda}\|W\|^2_{[H^s]^{N+1}}
  \\
  \leq
  C\lambda\int_{\Gamma_+}e^{2\lambda\varphi_*}
  |D^{\mathrm{obs}}-D^*|^2\,dS
  +\frac{C\varepsilon}{\lambda}
  \|U^*\|^2_{[H^s]^{N+1}}.
\end{multline*}
Using \eqref{eq:multiplicative-noise} and
$e^{2\lambda\varphi_*}\leq
e^{2\lambda\|\varphi_*\|_{L^\infty(\Omega)}}$ proves
\eqref{eq:backward-stability}.
\end{proof}

\begin{Remark}
\label{rem:two-errors-backward}
Although the original inverse problem is ill-posed, the Lipschitz
stability estimate \eqref{eq:backward-stability} does not contradict
this fact because it is established for a truncated and regularized
problem. First, fixing the truncation index $N$ restricts the temporal
dependence of the reconstructed solution to
$\operatorname{span}\{\Psi_0,\ldots,\Psi_N\}$ and excludes the
higher-index temporal modes, including highly oscillatory behaviors,
from the reduced model. Second, the Tikhonov term with parameter
$\varepsilon$ provides additional regularization for the resulting
finite coupled system.

Consequently, for a fixed $N$ and a fixed sufficiently large
$\lambda$, estimate \eqref{eq:backward-stability} shows that the
reconstruction error depends Lipschitz continuously on the relative
noise level $\delta$, up to the Tikhonov regularization error of order
$\sqrt{\varepsilon/\lambda}$. This stability result therefore applies
only to the truncated and regularized reduced problem. In particular,
the stability constant may depend on $N$ and on the retained basis
functions, and no estimate uniform with respect to $N$ is established
here. Hence, the estimate cannot be directly extended to the original
untruncated problem by letting $N\to\infty$ and
$\varepsilon\to0$.
\end{Remark}

Theorems~\ref{thm:backward-contraction} and
\ref{thm:backward-stability} indicate that the fixed point of
$\mathcal T^{\lambda,\varepsilon}$ provides a stable approximation to
the solution of the reduced system
\eqref{eq:reduced-full-system}. Once this fixed point has been
computed, the truncated approximation $u^{0,N}$ of the unknown initial
state is recovered using \eqref{eq:initial-reconstruction}.
The complete reconstruction procedure is summarized in
Algorithm~\ref{alg:carleman-picard}.

\begin{algorithm}[H]
\caption{Carleman--Picard reconstruction of $u^0$}
\label{alg:carleman-picard}
\begin{algorithmic}[1]
\State  Choose the truncation index $N$, the Carleman parameter
$\lambda$, the regularization parameter $\varepsilon$, the stopping
tolerance $\tau$, and the Carleman weight function $\varphi_*$.
\State Construct $\{\Psi_n\}_{n=0}^N$ and project $g$ and
$h^{\rm obs}$ to obtain $G$ and $D^{\rm obs}$.\State Set the initializer $U^{(0)}=0$.
\State Set $k^*=K_{\max}$.
\For{$k=1,\ldots,K_{\max}$}
  \State Freeze the first argument of
$\mathcal N(\x;U^{(k-1)},V)$ at $U^{(k-1)}$.
  \State Compute $U^{(k)}$ by minimizing
  $J_{U^{(k-1)}}^{\lambda,\varepsilon}$ from
  \eqref{eq:backward-functional}.
  \If{$\frac{\|U^{(k)}-U^{(k-1)}\|_{\ell^2}}{
  \max\{\|U^{(k)}\|_{\ell^2},10^{-14}\}}<\tau$}
    \State Set $k^*=k$ and terminate.
  \EndIf
\EndFor
\State Recover
$u^{0,N}(\x)=\displaystyle\sum_{n=0}^N
u_n^{(k^*)}(\x)\Psi_n(0)$.
\end{algorithmic}
\end{algorithm}

\section{Numerical Study}
\label{sec:numerics}
This section presents a two-dimensional numerical realization of the
Carleman--Picard reconstruction method and evaluates its performance for
three representative initial profiles under several levels of outflow
noise.
\subsection{Computational Setup}

We consider the square domain $\Omega=(-1,1)^2$. The spatially varying
advection field and the nonlinear coefficients are
\begin{align}
  H(x,y)
  &=
  \left(
  1+0.25\sin(\pi y)\cos(\pi x),
  0.5+0.20\cos(\pi x)\cos(\pi y)
  \right)^\top,
  \notag\\
  c(x,y,s)
  &=1+0.15\sin(\pi x)\cos(\pi y)+0.2s^2,
  \notag
  \\
  f(x,y,s)
  &=0.02\bigl(1+0.25\cos(\pi x)\sin(\pi y)\bigr)s,
  \notag
  \\
  \alpha(x,y,s)
  &=0.02\bigl(1+0.20\sin(\pi x)\cos(\pi y)\bigr)e^{-s}.
  \label{eq:num-coeffs}
\end{align}
for all $(x,y)\in\Omega$ and $s\in\mathbb R$. In particular,
$c(x,y,s)\geq0.85$, so
\eqref{eq:nondegenerate-velocity} holds with $\kappa_0=0.85$.
Moreover, $H_x\geq0.75$ and $H_y\geq0.30$. Hence the field is
nontrapping, and every forward integral curve exits through the right or
top side of the square. The computations use $T=1.5$. 

Synthetic boundary data are generated by solving the forward problem on
a uniform $61\times61$ Cartesian grid, with
\[
  \Delta x=\Delta y=\frac{2}{60}.
\]
The transport term is discretized by a first-order upwind scheme, and
time integration is performed by explicit Euler with the CFL-stable
step
\[
  \Delta t_{\rm CFL}
  =
  \frac{0.4}{\displaystyle
  c_{\max}\max_{\overline\Omega}
  \left(\frac{|H_x|}{\Delta x}+\frac{|H_y|}{\Delta y}\right)},
  \qquad c_{\max}=1.35.
\]
We set $N_t=\lceil T/\Delta t_{\rm CFL}\rceil$ and then use the uniform
step $\Delta t=T/N_t$. Let $t_n=n\Delta t$ and write
$u^n(x,y)\approx u(x,y,t_n)$.

For the exponential memory kernel, let
\[
  A^n(x,y)
  \approx
  \int_0^{t_n}
  \alpha(x,y,t_n-t')u(x,y,t')\,dt',
  \qquad
  A^0(x,y)=0.
\]
Using a piecewise-constant approximation of $u$ on each time interval,
the memory term is updated recursively by
\[
  A^{n+1}(x,y)
  =
  e^{-\Delta t}A^n(x,y)
  +
  \alpha(x,y,0)\bigl(1-e^{-\Delta t}\bigr)u^n(x,y).
\]
The forward state is then advanced according to
\begin{equation}
  u^{n+1}
  =
  u^n
  +
  \Delta t
  \left[
    -c(x,y,u^n)H\cdot\nabla_h u^n
    -f(x,y,u^n)
    -A^n
  \right].
  \label{eq:forward-update}
\end{equation}
In all numerical tests, we take $g=0$ on $\Gamma_-$. The inflow
boundary values used by the upwind discretization are therefore fixed
at zero throughout the forward computation, and no additional
post-processing correction is required after each time step.

\paragraph{Noise model.}
Let $\xi$ be an array of independent standard Gaussian random variables.
The noisy outflow data are generated by
\[
  h^{\rm obs}
  =
  h
  \left(
  1
  +\delta
  \frac{\|h\|_{L^2(\Gamma_+\times(0,T))}}
       {\|h\xi\|_{L^2(\Gamma_+\times(0,T))}}
  \xi
  \right).
\]
Consequently,
\[
  \frac{\|h^{\rm obs}-h\|_{L^2(\Gamma_+\times(0,T))}}
       {\|h\|_{L^2(\Gamma_+\times(0,T))}}
  =\delta.
\]

\subsection{Implementation}
\label{sec:numerical-implementation}

\paragraph{Time-dimensional reduction and the choice of $N$.}
For each candidate $N\in\{1,\ldots,25\}$, we construct
$\{\Psi_n\}_{n=0}^{N}$ from the explicit formula
\eqref{eq:legendre-exponential-basis} and project the noisy observation
onto this basis by trapezoidal quadrature. For each $N$, define the
relative projection residual
\[
  r_N
  :=
  \frac{\|P_Nh^{\rm obs}-h^{\rm obs}\|_{L^2(\Gamma_+\times(0,T))}}
  {\|h^{\rm obs}\|_{L^2(\Gamma_+\times(0,T))}}.
\]
We plot $r_N$ against $N$ and choose the smallest $N$ at which the curve
begins to level off. Numerically, we require the decreases in $r_N$ over
the next three values of $N$ to be no greater than $1\%$ of the total
decrease observed over the tested range. Taking the absolute minimum of
$r_N$ would generally favor the largest tested $N$ and fit unnecessary
high-frequency components of the noisy data. The plateau criterion is
performed directly from $h^{\rm obs}$.
After $N$ is fixed, the projected inflow and outflow coefficients are
computed by the corresponding uniform-grid quadrature:
\[
G_m(\x)
=
\int_0^T e^{-2t}g(\x,t)\Psi_m(t)\,dt,
\qquad \x\in\Gamma_-,
\]
and
\[
D_m^{\rm obs}(\x)
  =
  \int_0^T e^{-2t}h^{\rm obs}(\x,t)\Psi_m(t)\,dt,
\qquad \x\in\Gamma_+.
\]

\paragraph{Carleman--Picard solver.}
Let $\mathbf U^{(k-1)}$ denote the vector containing the nodal values
of all $N+1$ modal components at the $(k-1)$st Picard iteration. To
compute the next iterate, we freeze the first argument of
$\mathcal N(\x;U^{(k-1)},V)$ at $U^{(k-1)}$ and solve the resulting
linear problem with respect to $V$.

We use the Carleman weight
$\varphi_*(x,y)=x+0.5y$ from
Example~\ref{ex:carleman-weight}, which satisfies
\eqref{eq:carleman-weight-condition} with $\mu_0=0.9$.

Let $\mathbf A_H$ denote the fixed sparse matrix obtained by
discretizing $H\cdot\nabla$ using first-order upwind finite
differences. The time integrals appearing in
$\mathcal N(\x;U^{(k-1)},V)$ are evaluated by the trapezoidal rule.
Thus, the discrete approximation of the frozen interior equation is
obtained from
\[
  \mathbf A_H\mathbf V
  +
  \mathcal N(\x;U^{(k-1)},V)
  =
  \mathbf 0,
\]
where $\mathcal N(\x;U^{(k-1)},V)$ is understood as being evaluated
componentwise at all interior grid points. For fixed $U^{(k-1)}$, this
expression is affine with respect to the vector $\mathbf V$ of nodal
values of $V$.

Let $\mathbf B_-$ and $\mathbf B_+$ denote the matrices that extract
the nodal traces on $\Gamma_-$ and $\Gamma_+$, respectively, and let
$\mathbf G$ and $\mathbf D^{\mathrm{obs}}$ contain the corresponding
projected boundary data. We denote by $\mathbf W_\Omega$,
$\mathbf W_-$, and $\mathbf W_+$ the diagonal matrices containing the
Carleman and quadrature weights in the interior and on the two boundary
portions. The regularization term is discretized using a
finite-difference matrix $\mathbf R$ approximating the
$[H^4(\Omega)]^{N+1}$ norm, including all mixed spatial derivatives of
total order at most four.

At the $(k-1)$st Picard iteration, the new modal vector
$\mathbf U^{(k)}$ is obtained by minimizing
\begin{multline}
  \left\|
    \mathbf W_\Omega
    \left[
      \mathbf A_H\mathbf V
      +
      \mathcal N(\x;U^{(k-1)},V)
    \right]
  \right\|_2^2
  +
  \lambda^2
  \left\|
    \mathbf W_-
    \left(
      \mathbf B_-\mathbf V-\mathbf G
    \right)
  \right\|_2^2
  \\
  +
  \lambda^2
  \left\|
    \mathbf W_+
    \left(
      \mathbf B_+\mathbf V-\mathbf D^{\mathrm{obs}}
    \right)
  \right\|_2^2
  +
  \varepsilon
  \|\mathbf R\mathbf V\|_2^2.
  \label{eq:discrete-carleman-functional}
\end{multline}
Because the frozen interior equation is affine with respect to
$\mathbf V$, the functional in
\eqref{eq:discrete-carleman-functional} is quadratic. We compute its
minimizer $\mathbf U^{(k)}$ using MATLAB's built-in sparse linear
least-squares solver. Thus, the interior residual, the two boundary
misfits, and the regularization term are minimized simultaneously at
each Picard iteration.

\begin{Remark}[The admissible ball in theory and computation]
\label{rem:computational-admissible-ball}
The admissible ball $B_M$ plays an important role in the convergence
analysis. It provides a bounded set on which the nonlinear term is
uniformly Lipschitz and on which the Carleman--Picard map is proved to
be contractive. In the numerical implementation, however, we do not
explicitly impose the constraint $\mathbf V\in B_M$. Instead, we solve
the finite-dimensional quadratic least-squares problem without a norm
constraint. This creates no practical restriction because $M$ can be
chosen sufficiently large to contain all computed iterates. With such
a choice, the boundary of $B_M$ is never reached, and the constrained
and unconstrained minimizers coincide. Thus, the ball is needed to formulate and prove the function-space
contraction result, whereas it is inactive in our computation. We emphasize that the constants in the
contraction theorem may depend on $M$; hence, the theoretical estimate
is not asserted to be uniform as $M\to\infty$.
\end{Remark}

In computation, we take
\[
  \lambda=4,
  \qquad
  \varepsilon=10^{-6}.
\]
These parameters are selected manually using only Test~1 and are then
kept fixed for all subsequent tests; no test-dependent retuning is
performed. The Picard iteration is terminated at the first index $k$
satisfying
\begin{equation}
  \frac{
    \|\mathbf U^{(k)}-\mathbf U^{(k-1)}\|_{\ell^2}
  }{
    \max\{\|\mathbf U^{(k)}\|_{\ell^2},10^{-14}\}
  }
  <\tau,
  \label{eq:stopping}
\end{equation}
where $\tau=10^{-4}$. If this criterion is not satisfied earlier, the
iteration is terminated after $K_{\max}=10$ steps.

\subsection{Test profiles}
We first examine a disk inclusion. We report the true initial
state, the reconstructions obtained from $5\%$ and $10\%$ noisy data,
the corresponding curves used to select $N$, and the relative changes
between consecutive Picard iterates. 

\paragraph{Test 1: Disk inclusion.} In this test, the true initial function is given by
\begin{equation}
  u^0(x,y)
  =
  \begin{cases}
    1,&(x-0.30)^2+(y-0.30)^2\leq0.15^2,\\
    0,&\text{otherwise},
  \end{cases}
  \qquad (x,y)\in\Omega.
  \label{eq:disk-profile}
\end{equation}
The reconstruction results are displayed in
Figure~\ref{fig:disk-profile}.
\begin{figure}[!ht]
\centering
\subfloat[True $u^0$]{%
  \includegraphics[width=0.32\textwidth]{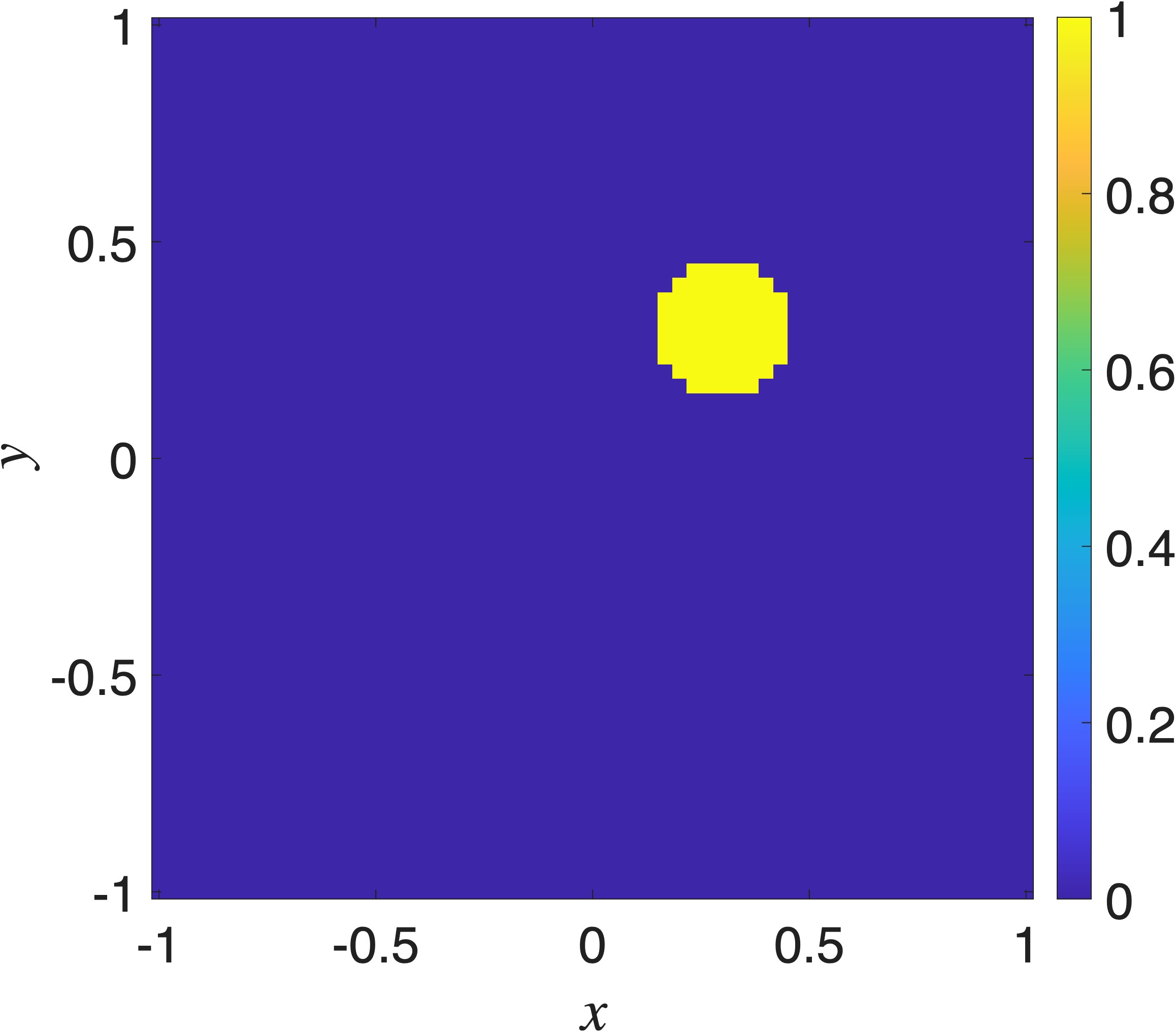}}
\hfill
\subfloat[Reconstruction, $\delta=5\%$]{%
  \includegraphics[width=0.32\textwidth]{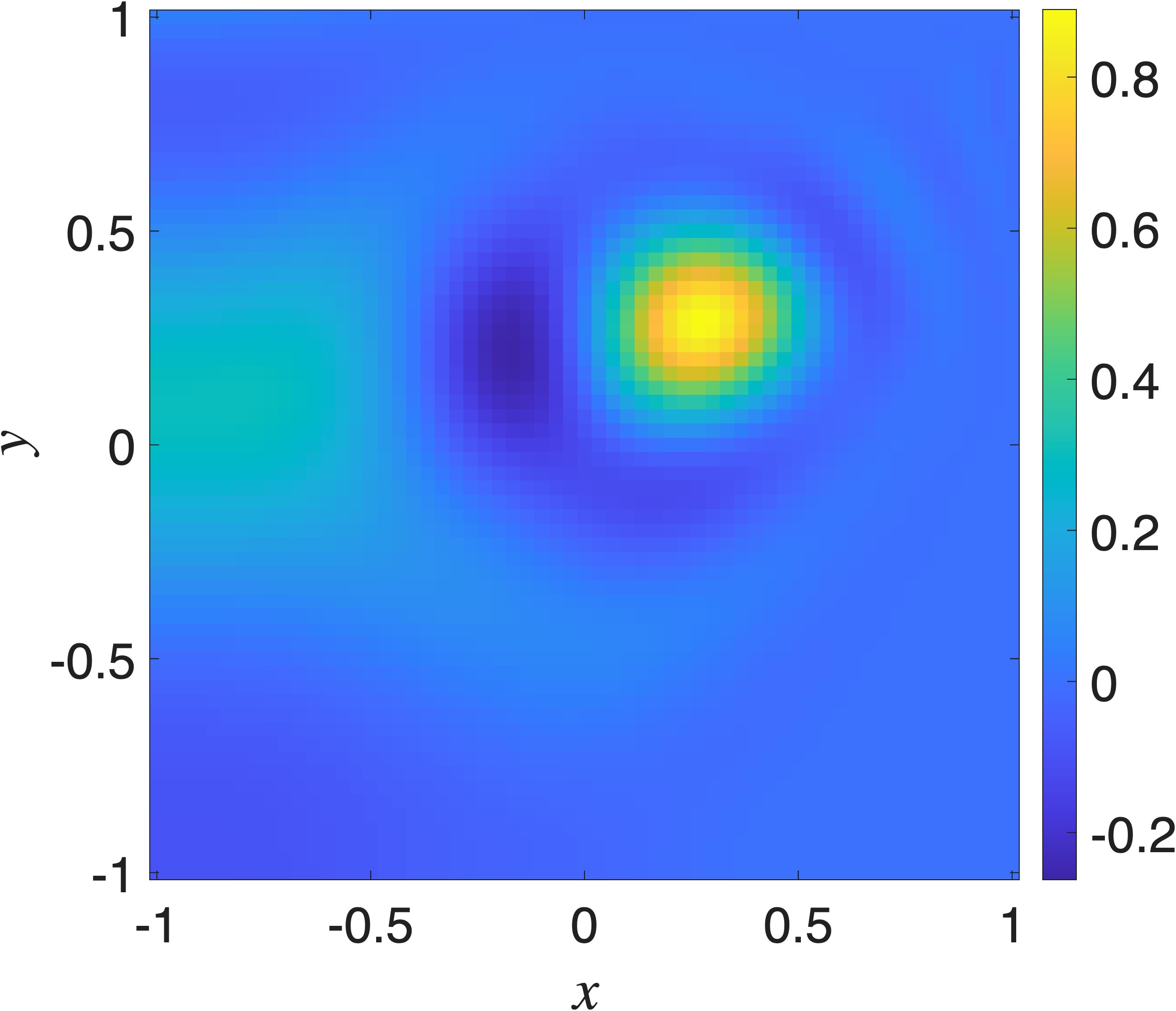}}
\hfill
\subfloat[Reconstruction, $\delta=10\%$]{%
  \includegraphics[width=0.32\textwidth]{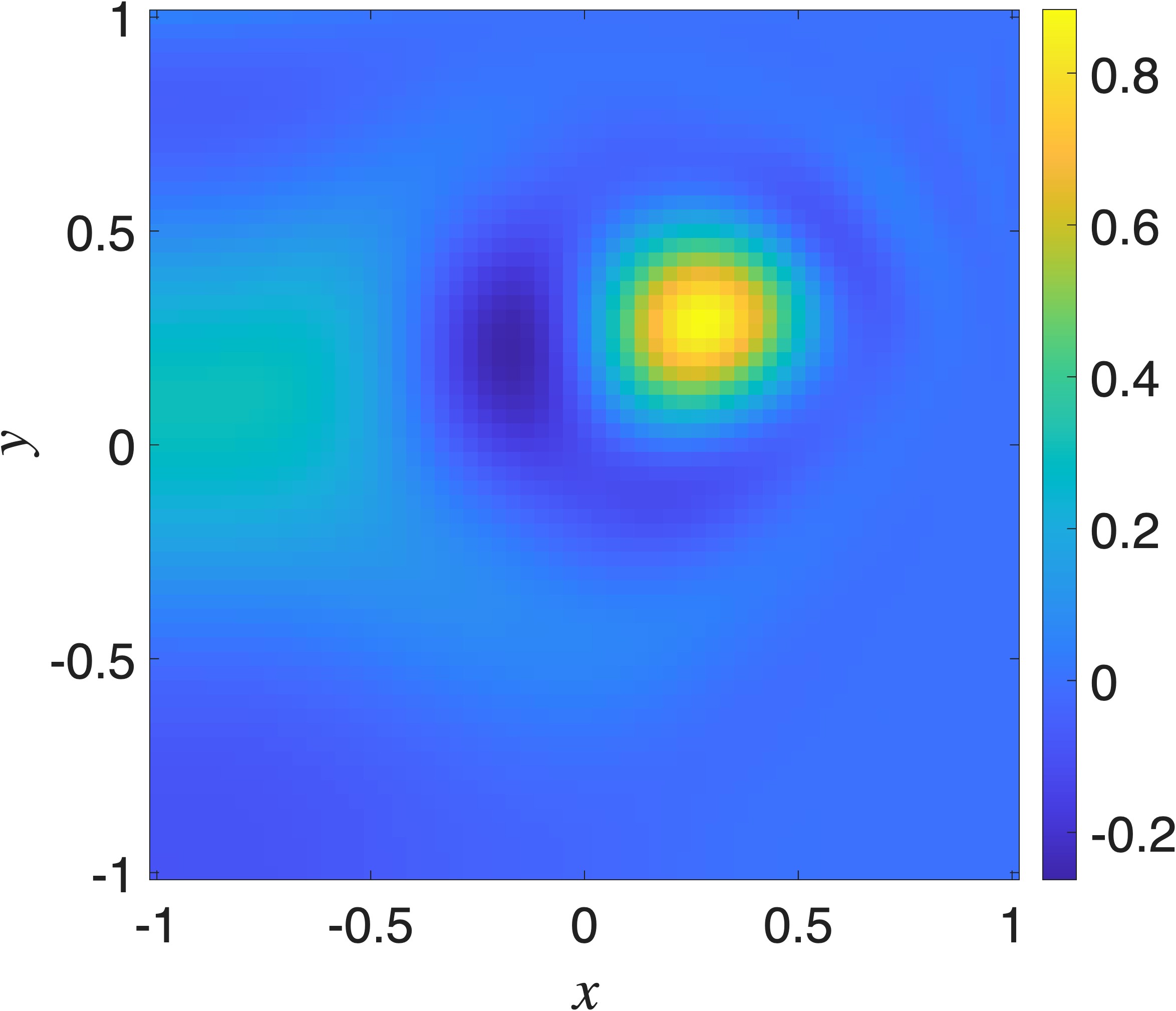}}

\vspace{2pt}
\subfloat[Selection of $N$: $N=13$ for $\delta=5\%$ and
$N=13$ for $\delta=10\%$]{%
  \includegraphics[width=0.48\textwidth]{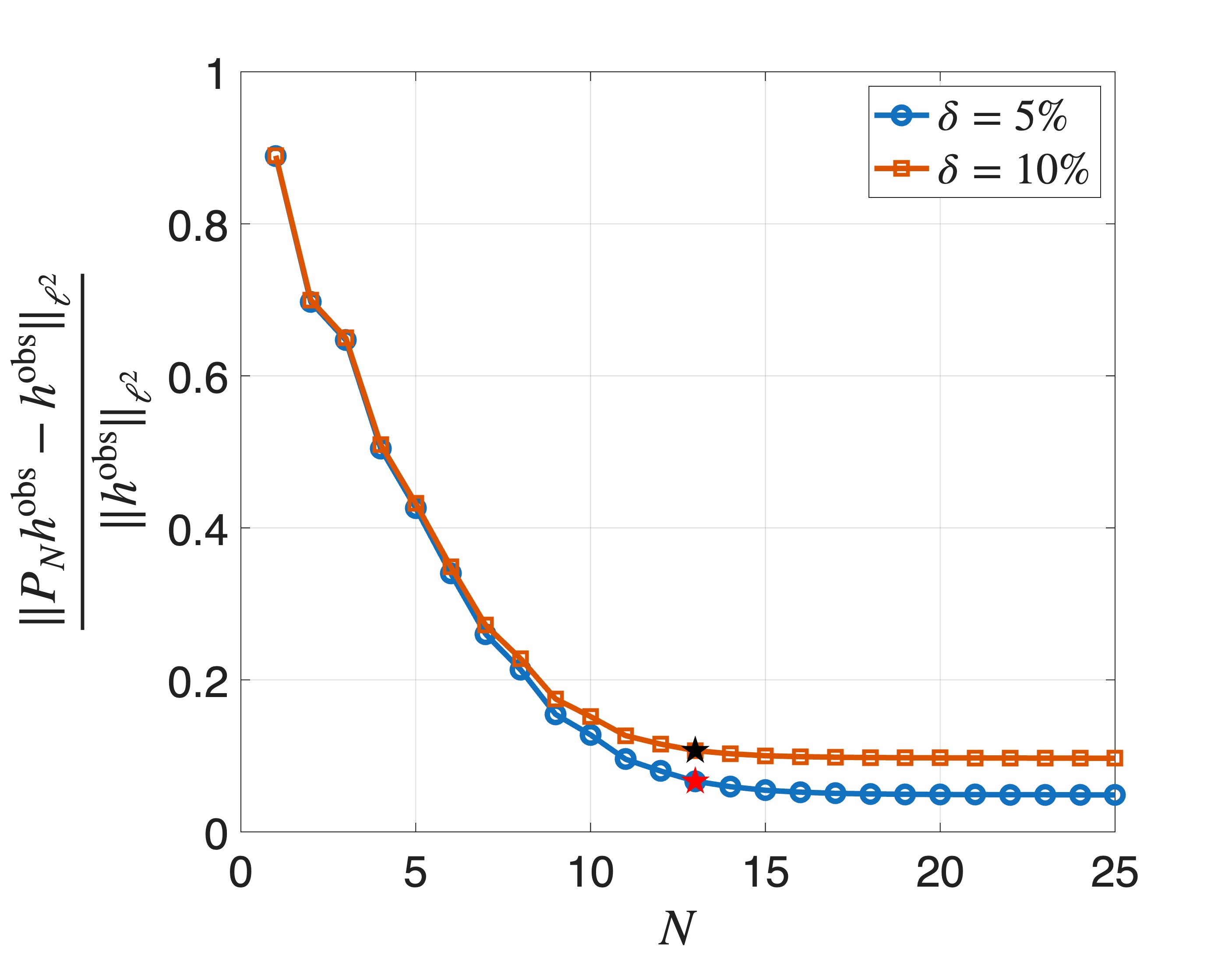}}
\hfill
\subfloat[Relative Picard changes: $k_*=5$ for both noise levels]{%
  \includegraphics[width=0.48\textwidth]{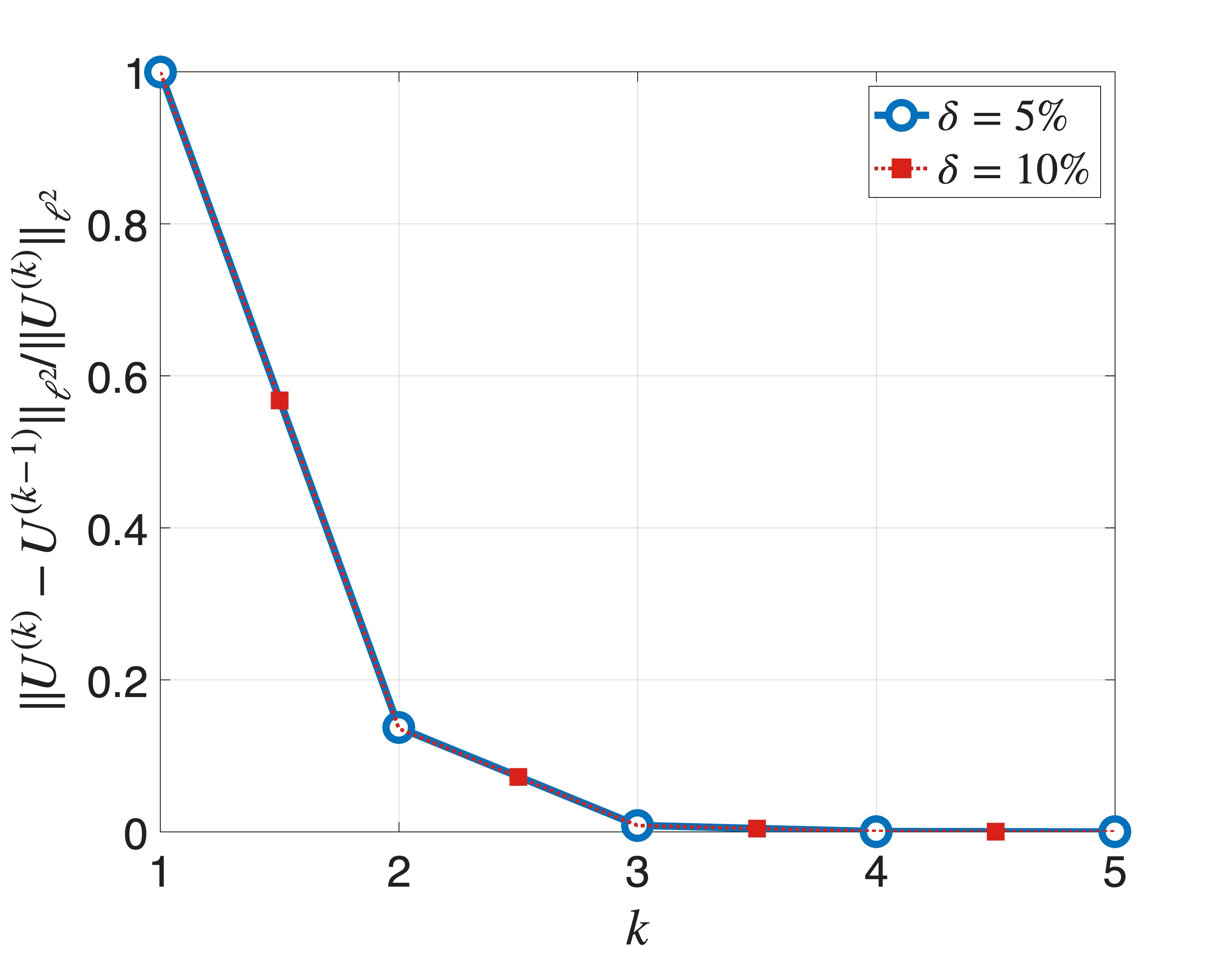}}
\caption{Test 1. The true disk inclusion, the reconstructions, the
selection of $N$, and the relative Picard changes for $5\%$ and $10\%$
noise.}
\label{fig:disk-profile}
\end{figure}

For both noise levels, the plateau criterion selects $N=13$, and the
two projection-residual curves exhibit very similar behavior. For
$\delta=5\%$, the maximum reconstructed value is $0.889461$, yielding
$E_{\max}=11.05\%$. For $\delta=10\%$, the maximum reconstructed value
is $0.883682$, with $E_{\max}=11.63\%$. Thus, doubling the noise level
increases the relative maximum-value error by only $0.58$ percentage
points, indicating that the reconstructed amplitude is stable with
respect to the added noise.

As shown in Figure~\ref{fig:disk-profile}, the disk inclusion is clearly
localized in both reconstructions, and its center remains close to the
true location $(0.30,0.30)$. The principal shape and approximate
spatial extent of the inclusion are also recovered. Because the true
initial condition is discontinuous, the sharp interface of the disk is
smoothed in the reconstructions. Small-amplitude artifacts are also
visible in the background, but they do not obscure the location or the
main structure of the inclusion.

For both noise levels, the stopping index is $k_*=5$. The relative
changes between consecutive Picard iterates decrease
rapidly during the first few iterations and become very small by the
fourth iteration. Moreover, the convergence curves corresponding to
the two noise levels are nearly indistinguishable. These observations
demonstrate the rapid convergence of the Carleman--Picard iteration and
show that, for this test, both the reconstruction and the convergence
history are only weakly affected by increasing the noise level from
$5\%$ to $10\%$.

\paragraph{Test 2: Two Gaussian inclusions.}
In this test, the true initial function is given by
\begin{equation}
  u^0(x,y)
  =e^{-15[(x-0.30)^2+(y+0.40)^2]}
  +0.60e^{-18[(x-0.20)^2+(y-0.35)^2]},
  \qquad (x,y)\in\Omega,
  \label{eq:two-gaussian-profile}
\end{equation}
where the right-hand side is normalized so that its maximum value is
$1$. The reconstruction results are displayed in
Figure~\ref{fig:two-gaussian-profile}.

\begin{figure}[!ht]
\centering
\subfloat[True $u^0$]{%
  \includegraphics[width=0.32\textwidth]{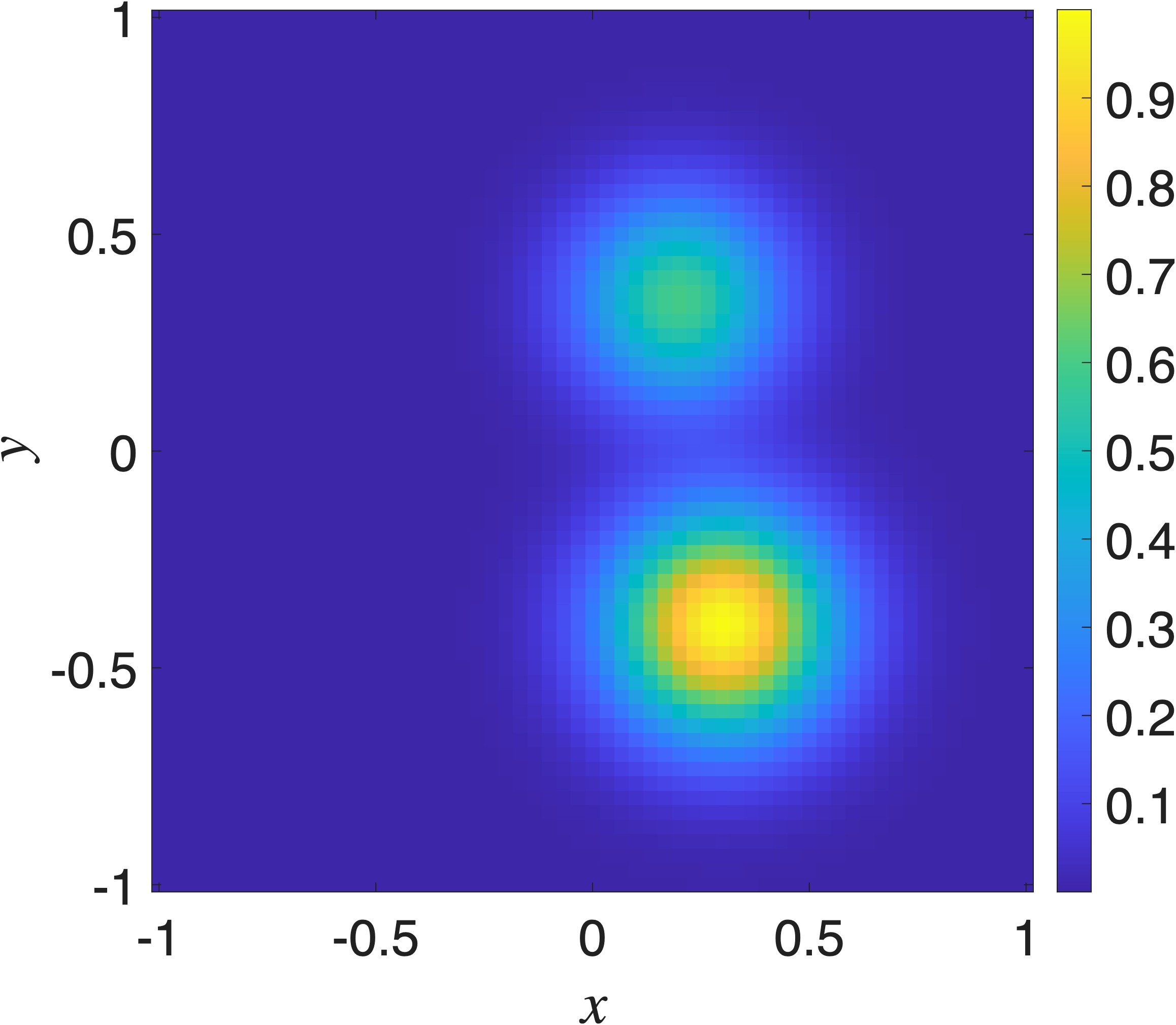}}
\hfill
\subfloat[Reconstruction, $\delta=5\%$]{%
  \includegraphics[width=0.32\textwidth]{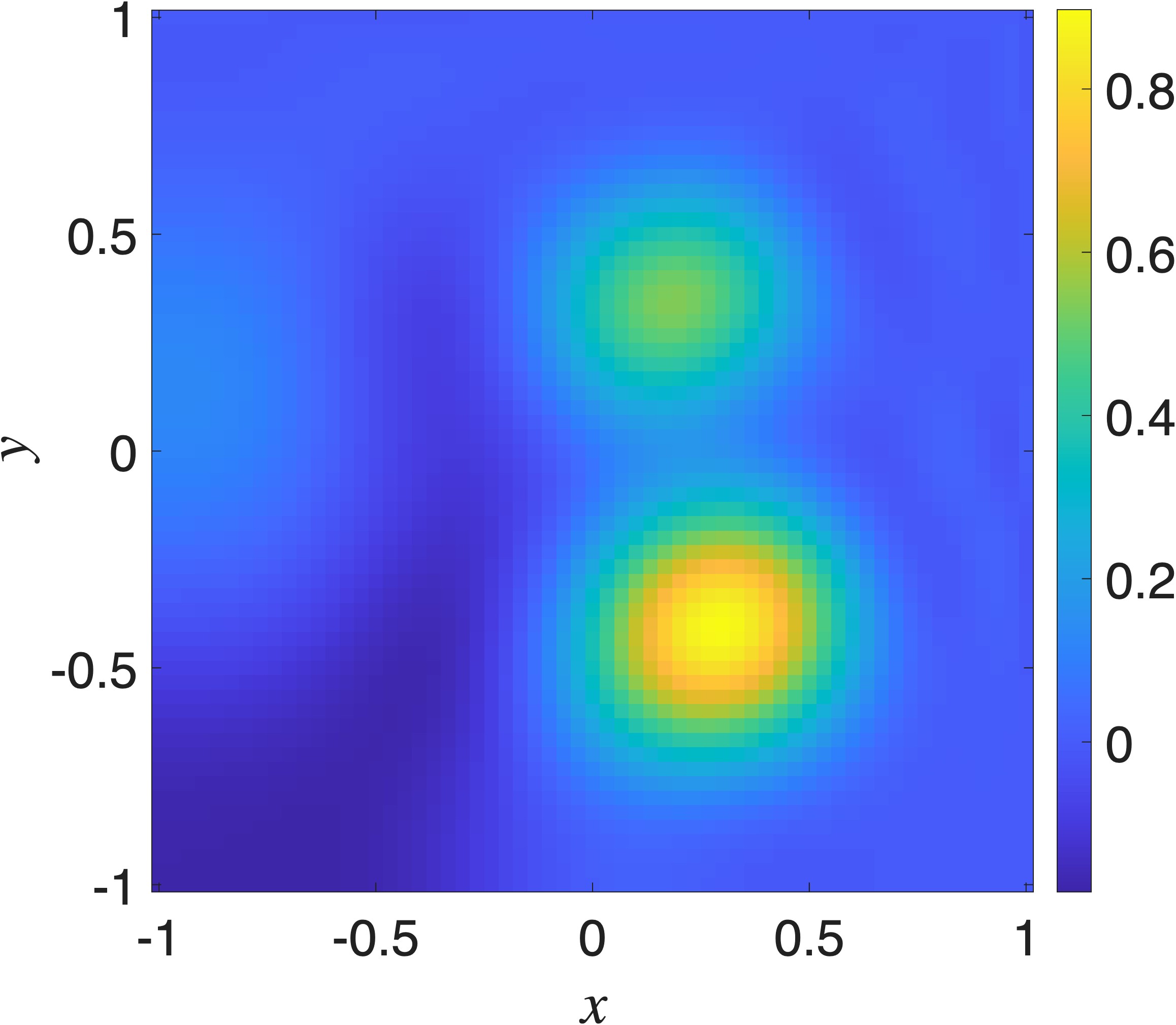}}
\hfill
\subfloat[Reconstruction, $\delta=10\%$]{%
  \includegraphics[width=0.32\textwidth]{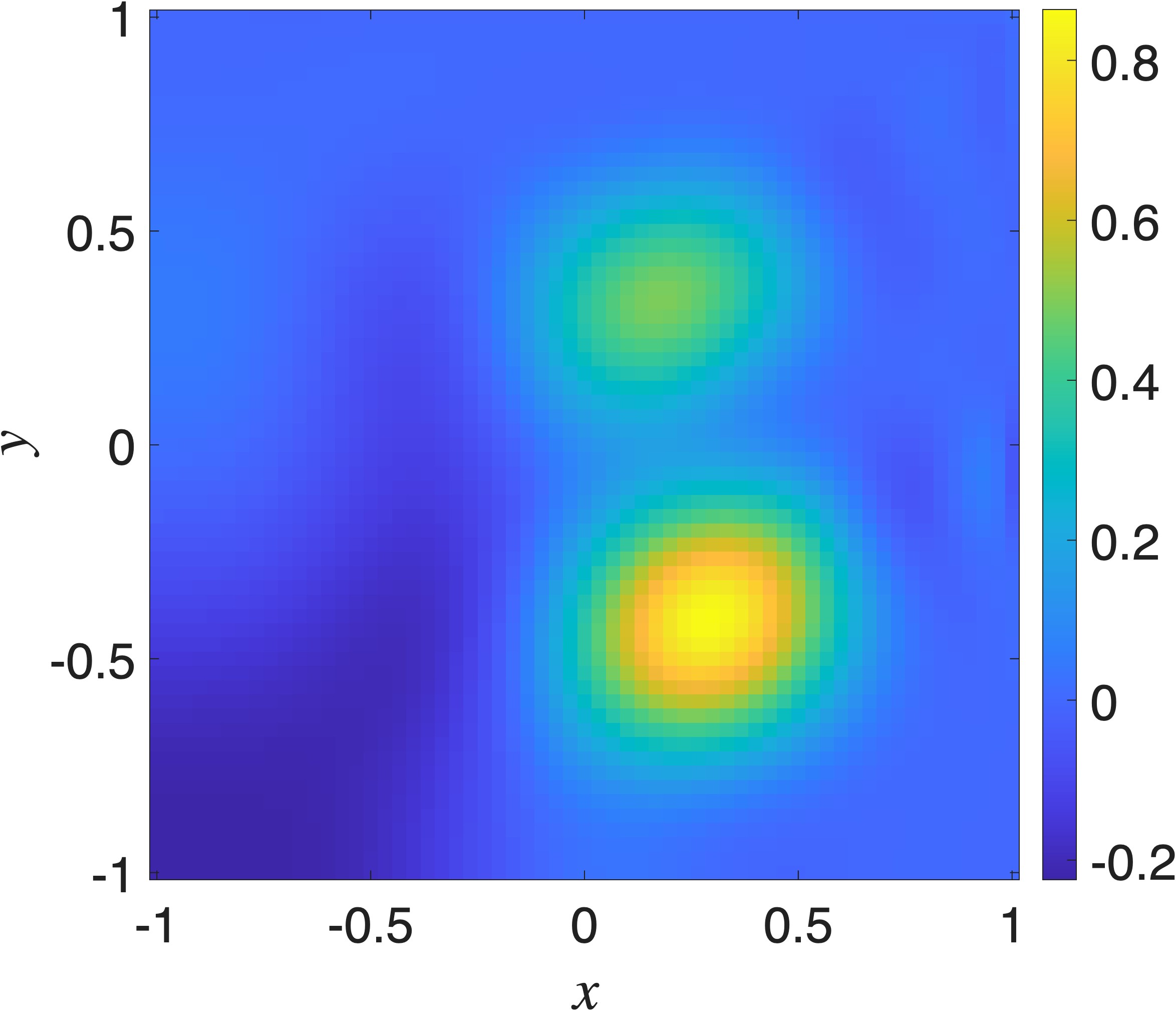}}

\vspace{2pt}
\subfloat[Selection of $N$: $N=10$ for $\delta=5\%$ and
$N=8$ for $\delta=10\%$]{%
  \includegraphics[width=0.48\textwidth]{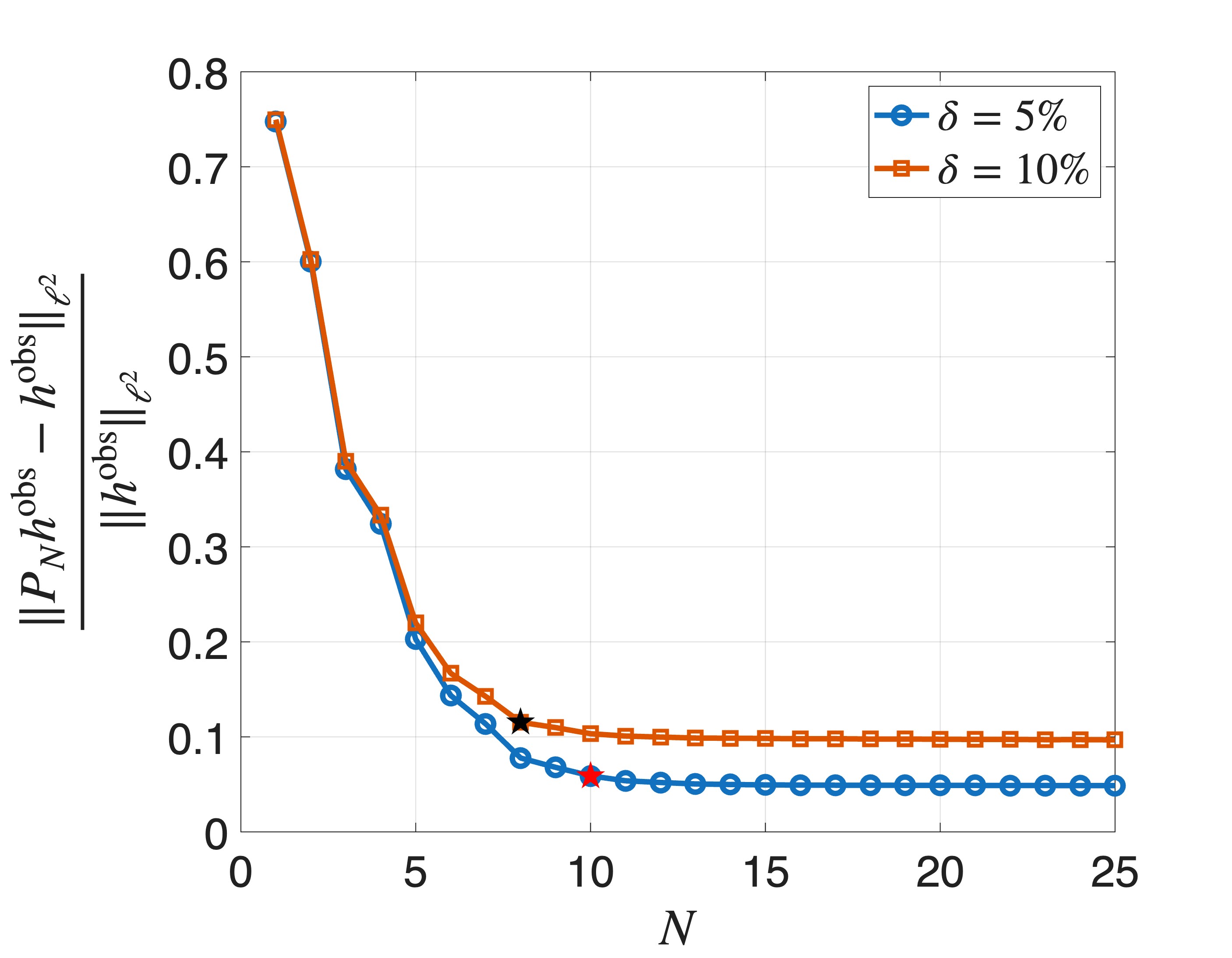}}
\hfill
\subfloat[Relative Picard changes: $k_*=5$ for $\delta=5\%$ and
$k_*=10$ for $\delta=10\%$]{%
  \includegraphics[width=0.48\textwidth]{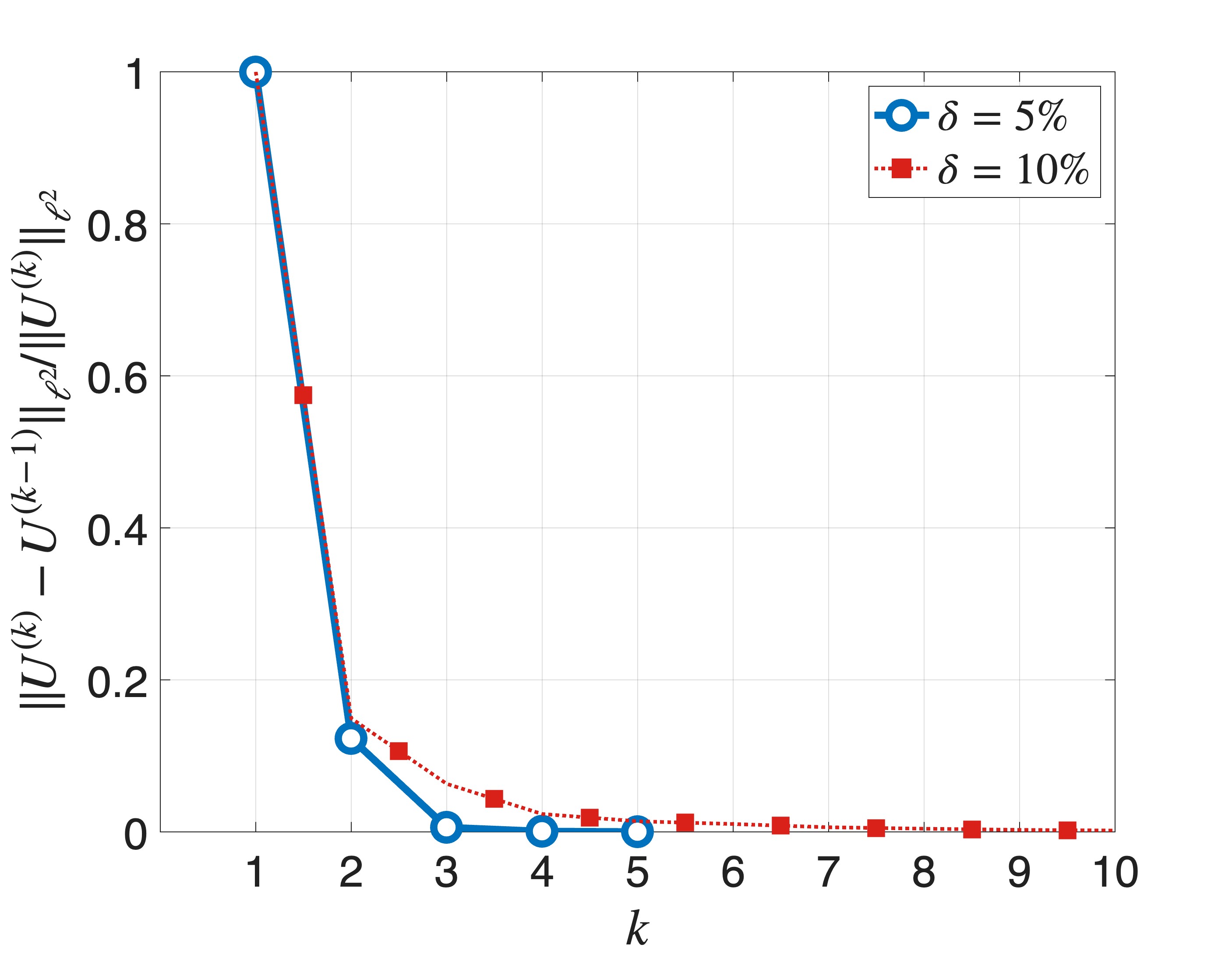}}
\caption{Test 2. The true two-Gaussian profile, the reconstructions, the
selection of $N$, and the relative Picard changes for $5\%$ and $10\%$
noise.}
\label{fig:two-gaussian-profile}
\end{figure}

The plateau criterion selects $N=10$ for $5\%$ noise and $N=8$ for
$10\%$ noise. For $5\%$ noisy data, the reconstructed maxima of the
lower and upper inclusions are $0.897240$ and $0.536076$, respectively,
yielding relative maximum-value errors of $10.28\%$ and $10.24\%$. For
$10\%$ noisy data, the corresponding reconstructed maxima are
$0.863627$ and $0.497659$, with relative errors of $13.64\%$ and
$16.68\%$. Thus, when the noise level increases from $5\%$ to $10\%$,
the relative error increases by $3.36$ percentage points for the lower
inclusion and by $6.44$ percentage points for the upper inclusion. The
weaker upper inclusion is therefore more sensitive to the increased
noise, although its amplitude remains clearly distinguishable from the
background.

As shown in Figure~\ref{fig:two-gaussian-profile}, both inclusions are
clearly visible in the reconstructions. Their centers remain close to
the true locations $(0.30,-0.40)$ and $(0.20,0.35)$, and their spatial
separation is preserved. The reconstruction also retains the correct
ordering of their amplitudes: the lower inclusion remains stronger than
the upper inclusion. Some smoothing and low-amplitude background
artifacts are present, but they do not obscure either peak or cause a
noticeable displacement of the inclusions.

For $5\%$ noise, the relative change between consecutive Picard
iterates decreases sharply and is already close to zero after the third
iteration; the stopping index is $k_*=5$. For $10\%$ noise, the decrease
is slower but remains steady, and the iteration reaches the prescribed
maximum $k_*=K_{\max}=10$. Hence, the Carleman--Picard iteration
converges rapidly for both noise levels, while the noisier data require
more iterations and do not attain the stopping tolerance before
$K_{\max}$.

\paragraph{Test 3: Y-shaped inclusion.}
This test contains a single inclusion in the shape of the letter Y. The
true initial function is
\begin{equation}
  u^0(x,y)=\begin{cases}
    1,&(x,y)\in\mathcal Y,\\
    0,&(x,y)\in\Omega\setminus\mathcal Y,
  \end{cases}
  \label{eq:Y-profile}
\end{equation}
where $\mathcal Y$ is formed by two upper branches joining
$(0.32,0.90)$ and $(0.84,0.90)$ to $(0.58,0.58)$, together with the
vertical stem joining $(0.58,0.58)$ to $(0.58,0.22)$. Each stroke has
half-width $0.055$. The reconstruction results are displayed in
Figure~\ref{fig:Y-profile}.

\begin{figure}[!ht]
\centering
\subfloat[True $u^0$]{%
  \includegraphics[width=0.32\textwidth]{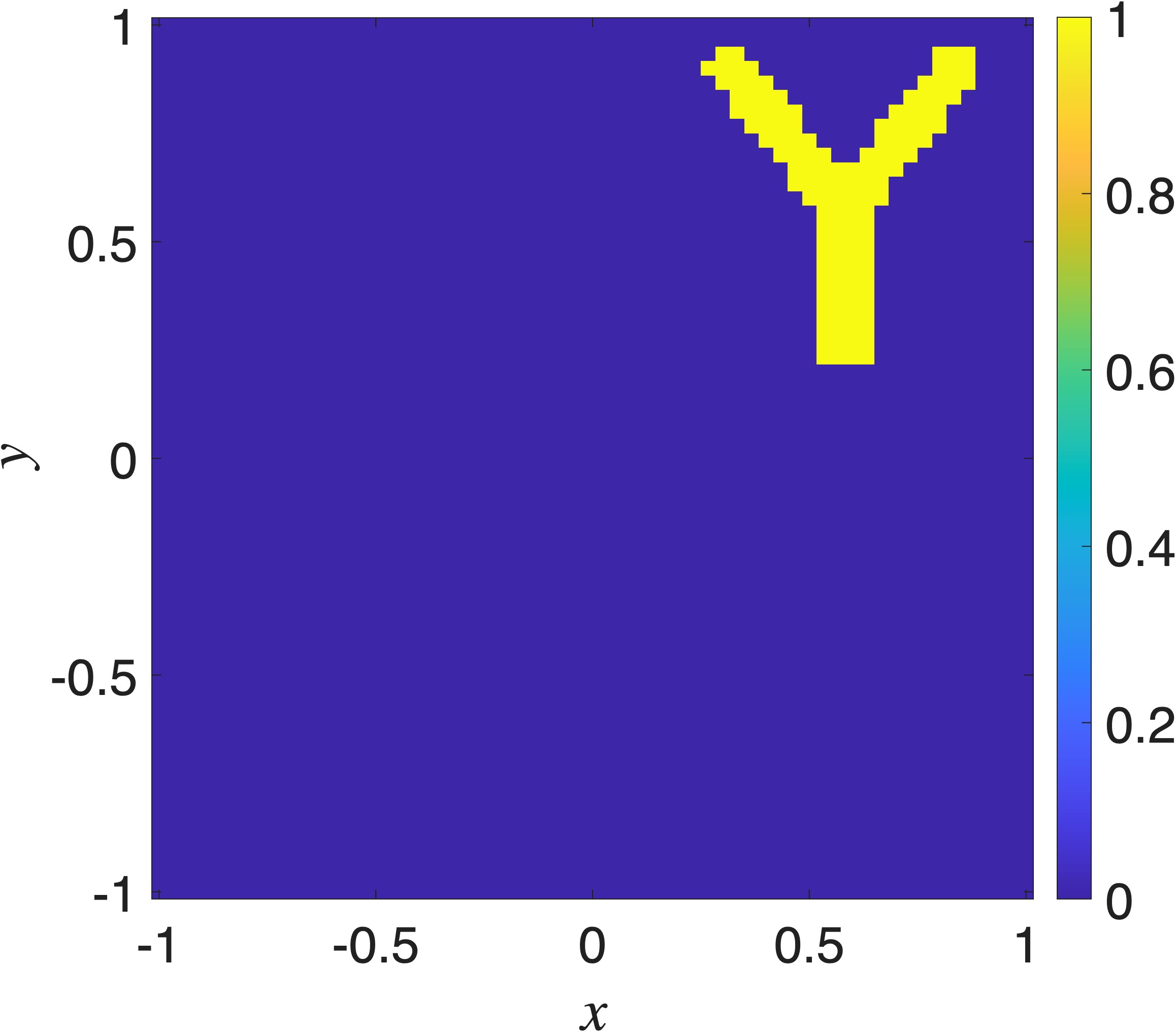}}
\hfill
\subfloat[Reconstruction, $\delta=5\%$]{%
  \includegraphics[width=0.32\textwidth]{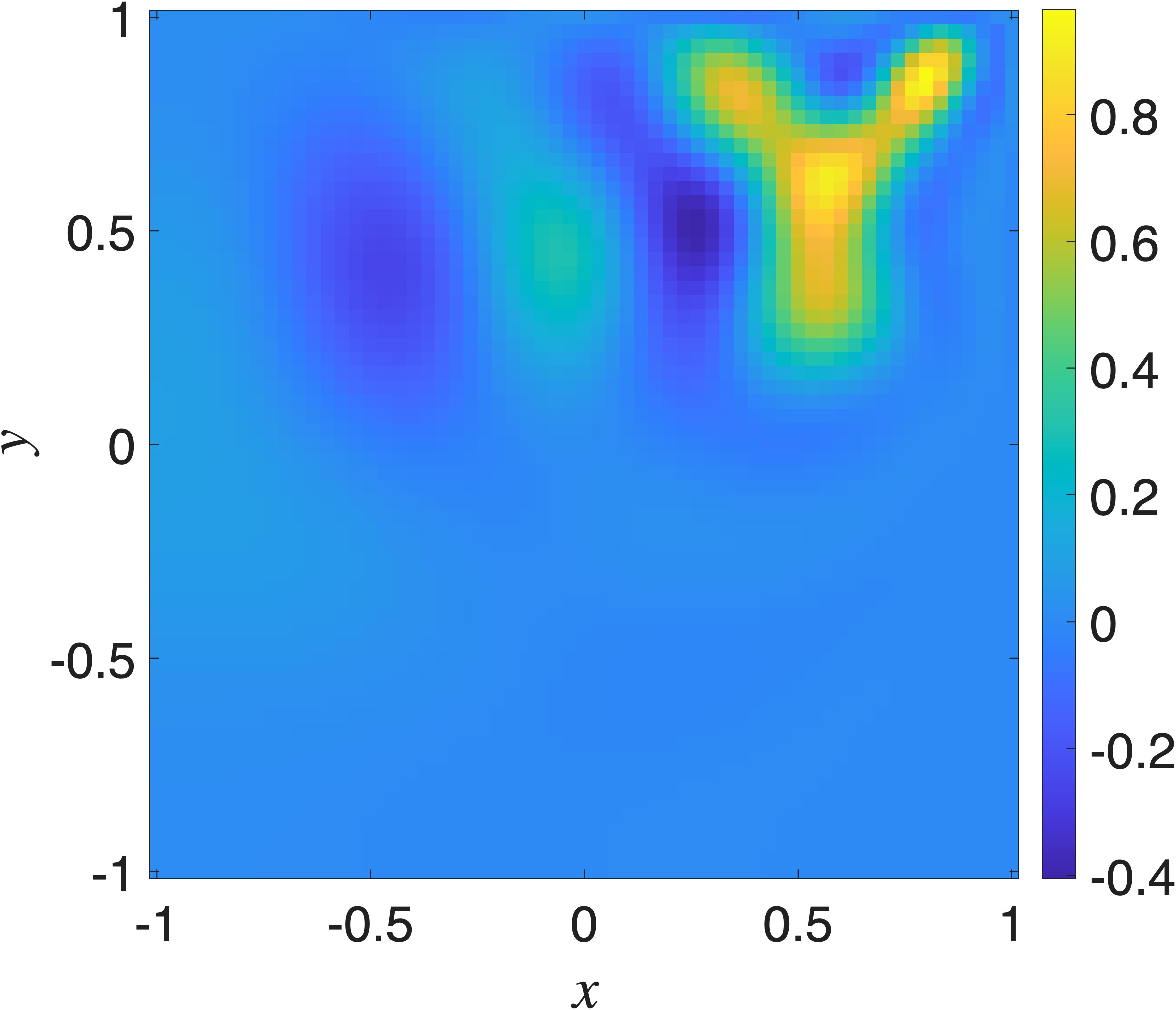}}
\hfill
\subfloat[Reconstruction, $\delta=10\%$]{%
  \includegraphics[width=0.32\textwidth]{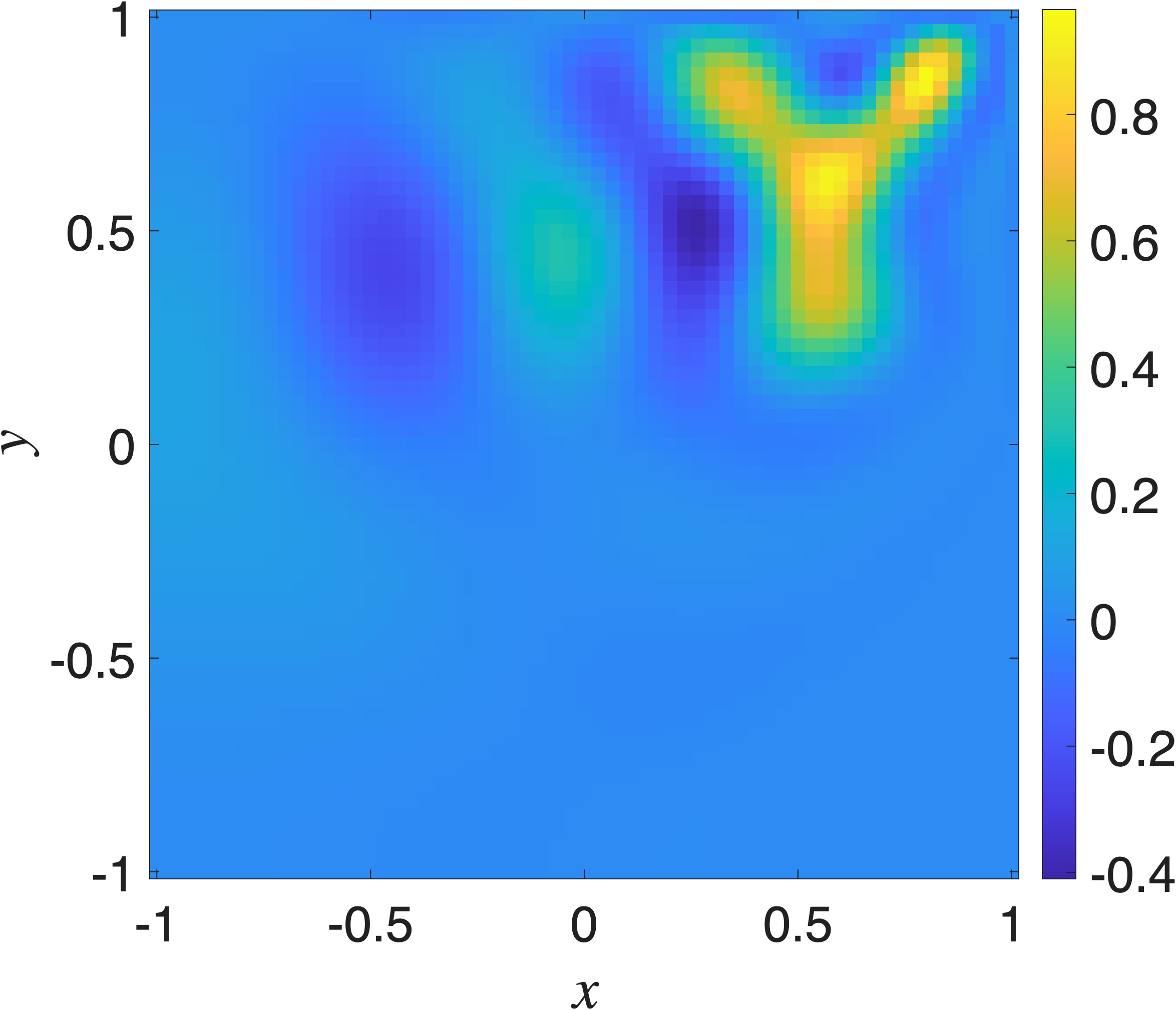}}

\vspace{2pt}
\subfloat[Selection of $N$: $N=14$ for both noise levels]{%
  \includegraphics[width=0.48\textwidth]{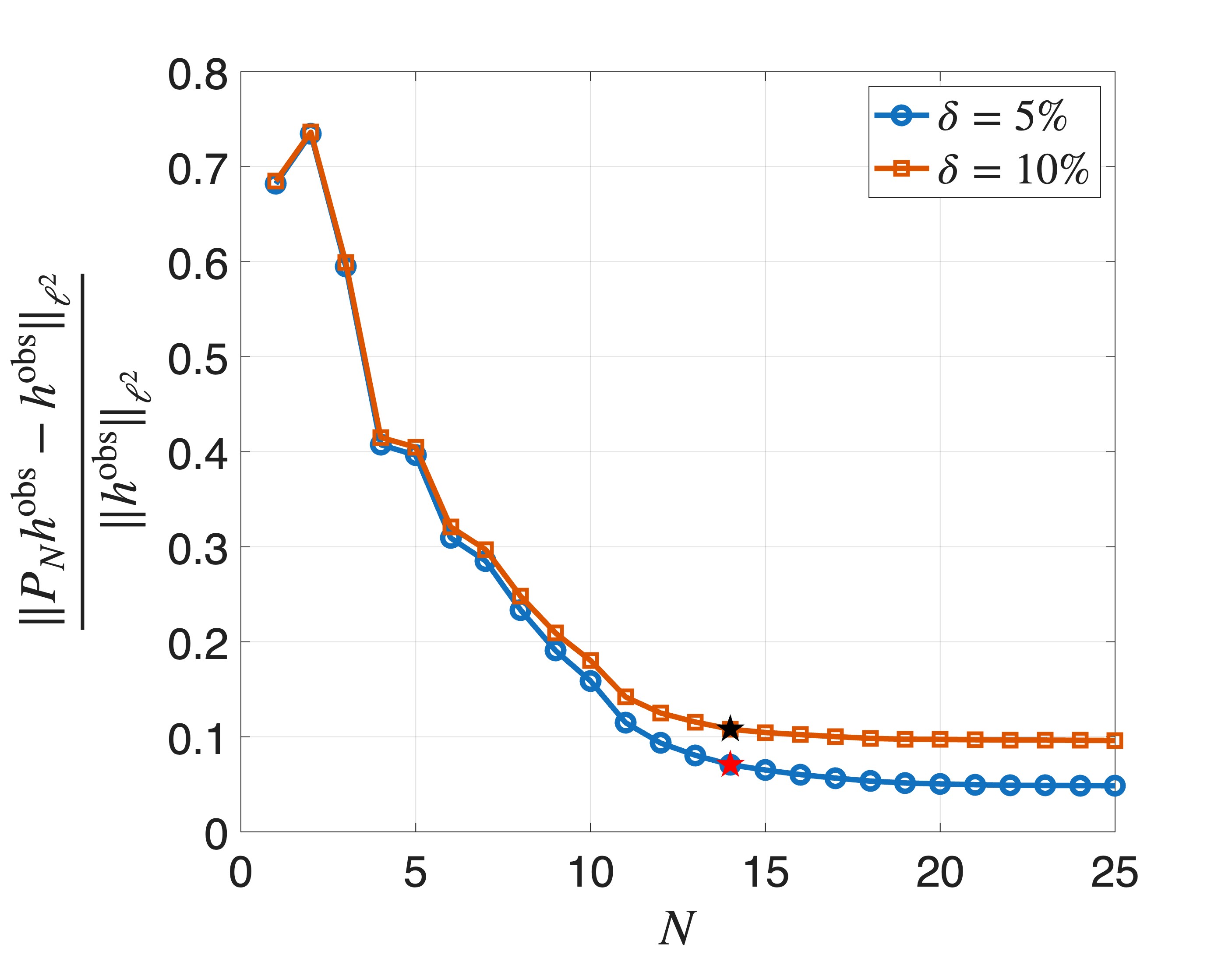}}
\hfill
\subfloat[Relative Picard changes: $k_*=5$ for both noise levels]{%
  \includegraphics[width=0.48\textwidth]{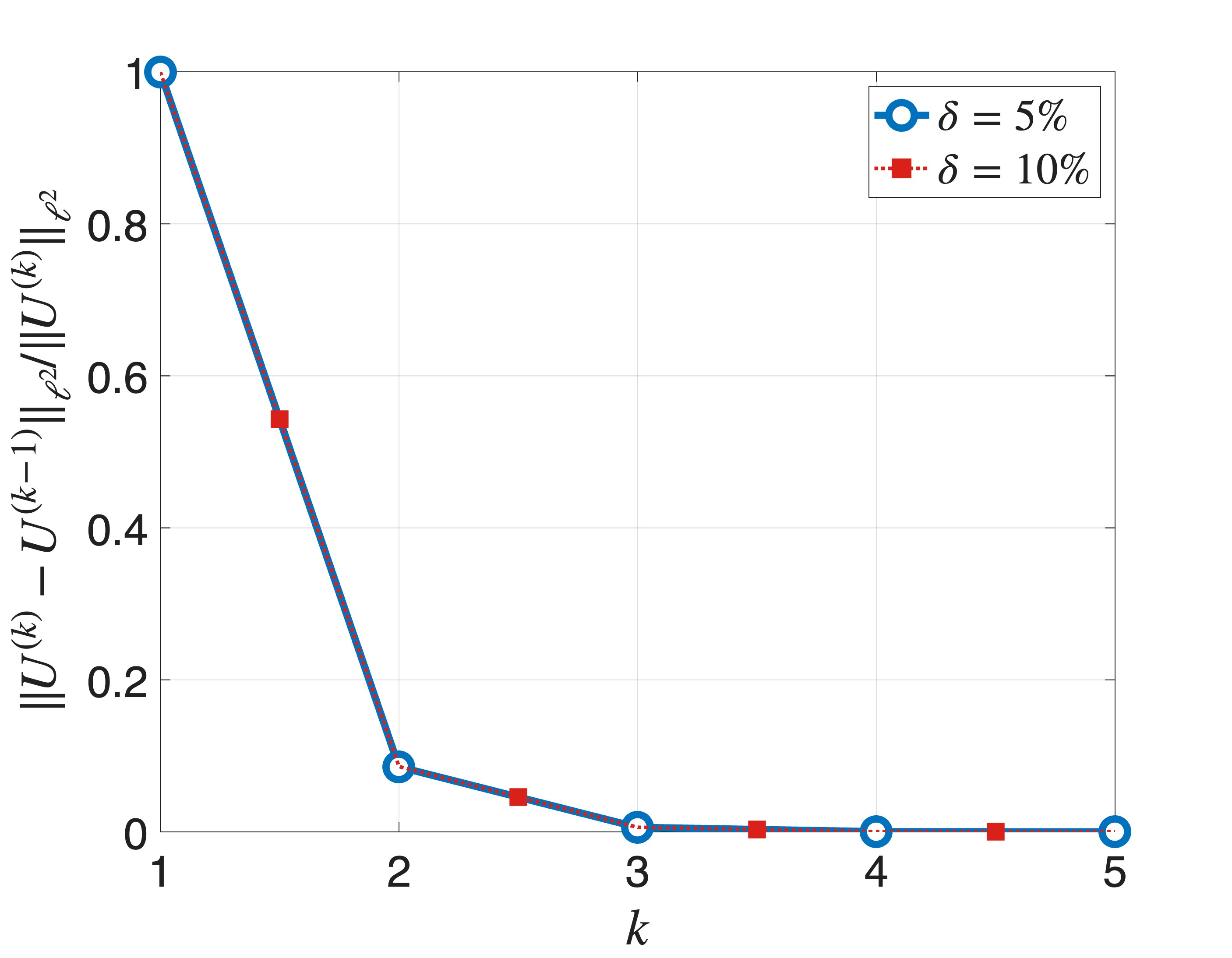}}
\caption{Test 3. The true Y-shaped inclusion, the reconstructions, the
selection of $N$, and the relative Picard changes for $5\%$ and $10\%$
noise.}
\label{fig:Y-profile}
\end{figure}

For both noise levels, the plateau criterion selects $N=14$. For
$5\%$ noisy data, the maximum reconstructed value is $0.965675$, which
corresponds to a relative maximum-value error of $3.43\%$. For $10\%$
noisy data, the maximum reconstructed value is $0.966293$, with a
relative error of $3.37\%$. The difference between the two relative
errors is only $0.06$ percentage points. Thus, for this test, increasing
the noise level from $5\%$ to $10\%$ has essentially no effect on the
recovered maximum amplitude.

As shown in Figure~\ref{fig:Y-profile}, the Y-shaped inclusion remains
clearly recognizable in both reconstructions and is recovered in the
correct upper-right region of $\Omega$. The two upper branches, their
junction, and the vertical stem are all visible, so the main topology
and orientation of the inclusion are preserved. The thin strokes of the
true discontinuous inclusion become wider and smoother in the
reconstructions. Oscillatory background artifacts are also visible,
particularly to the left of the inclusion, but they do not obscure the
Y shape or shift its principal location.

For both noise levels, the stopping index is $k_*=5$. The two relative
Picard-change curves are nearly indistinguishable: they decrease sharply
during the first three iterations and are close to zero thereafter.
This behavior indicates rapid convergence and shows that the convergence
history is stable with respect to the increase in the noise level.

\begin{Remark}[Regularity of the numerical test profiles]
\label{rem:numerical-regularity}
The analytical framework assumes
$u\in C^1(\overline\Omega\times[0,T])$, and therefore requires a
correspondingly smooth initial state. The disk and Y-shaped profiles
used above are characteristic functions with discontinuous interfaces,
so they do not satisfy this regularity assumption. We include these
examples as numerical stress tests to examine whether the reconstruction
method can still identify the location and geometry of sharply defined
inclusions. The associated computations should not be interpreted as a
verification or extension of the convergence and stability theorems to
discontinuous solutions. Such an extension would require a separate
analysis in an appropriate weak-solution framework. By contrast, the
two-Gaussian profile is smooth and is consistent with the regularity
assumed in the theoretical development.
\end{Remark}

\subsection{Influence of the Carleman parameter}
We next examine the influence of the Carleman parameter on Test~2 using
the same $5\%$ noisy data realization. We vary $\lambda$ from $0$ to
$7$, while all remaining computational parameters are kept fixed. For
$\lambda=0$, the exponential weight is absent and the boundary misfit
is retained with unit weight; this case therefore represents the
standard unweighted Picard iteration. Selected reconstructions are
shown in Figure~\ref{fig:lambda-study}.

\begin{figure}[!ht]
\centering
\subfloat[$\lambda=0$]{%
  \includegraphics[width=0.32\textwidth]{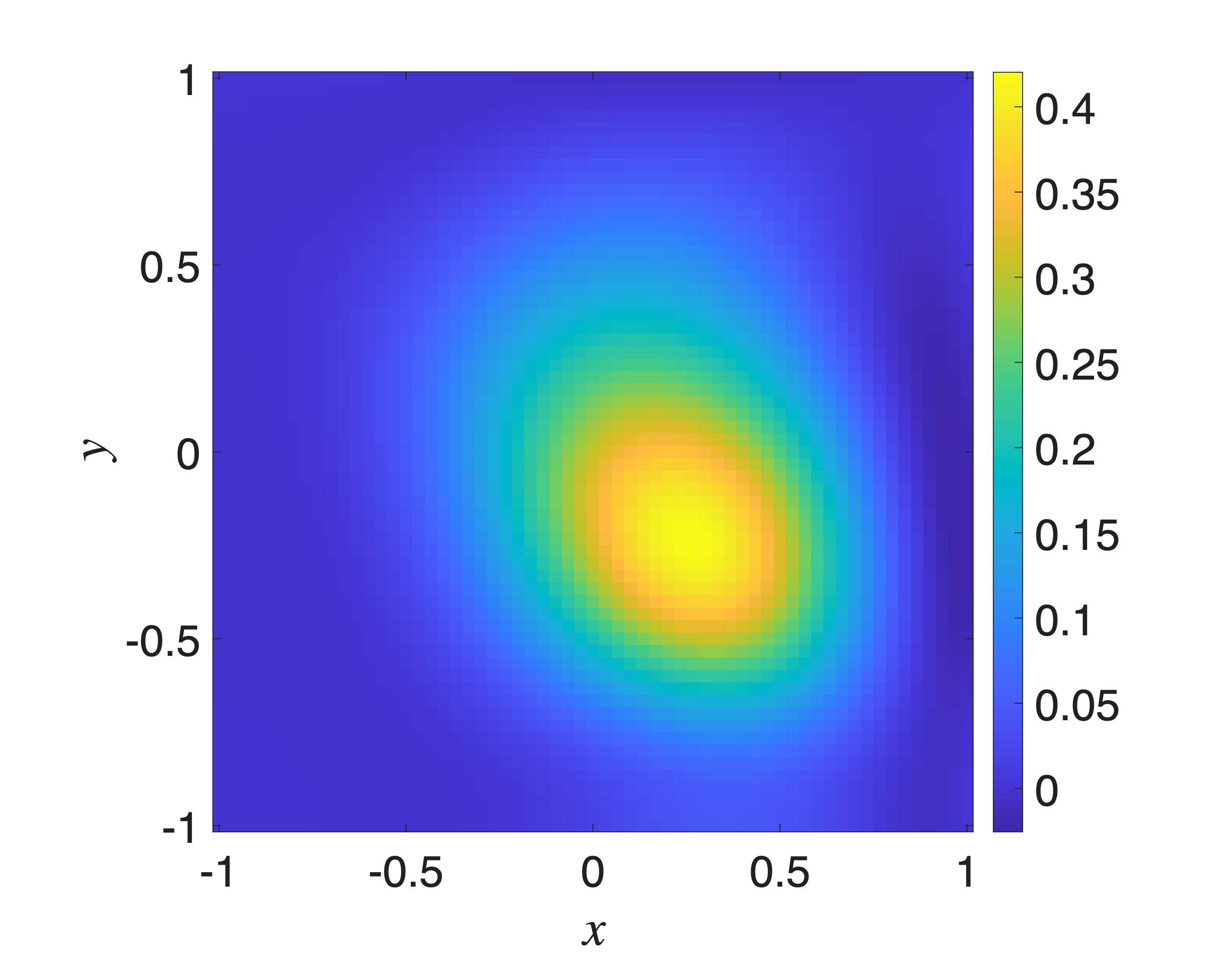}}
\hfill
\subfloat[$\lambda=1$]{%
  \includegraphics[width=0.32\textwidth]{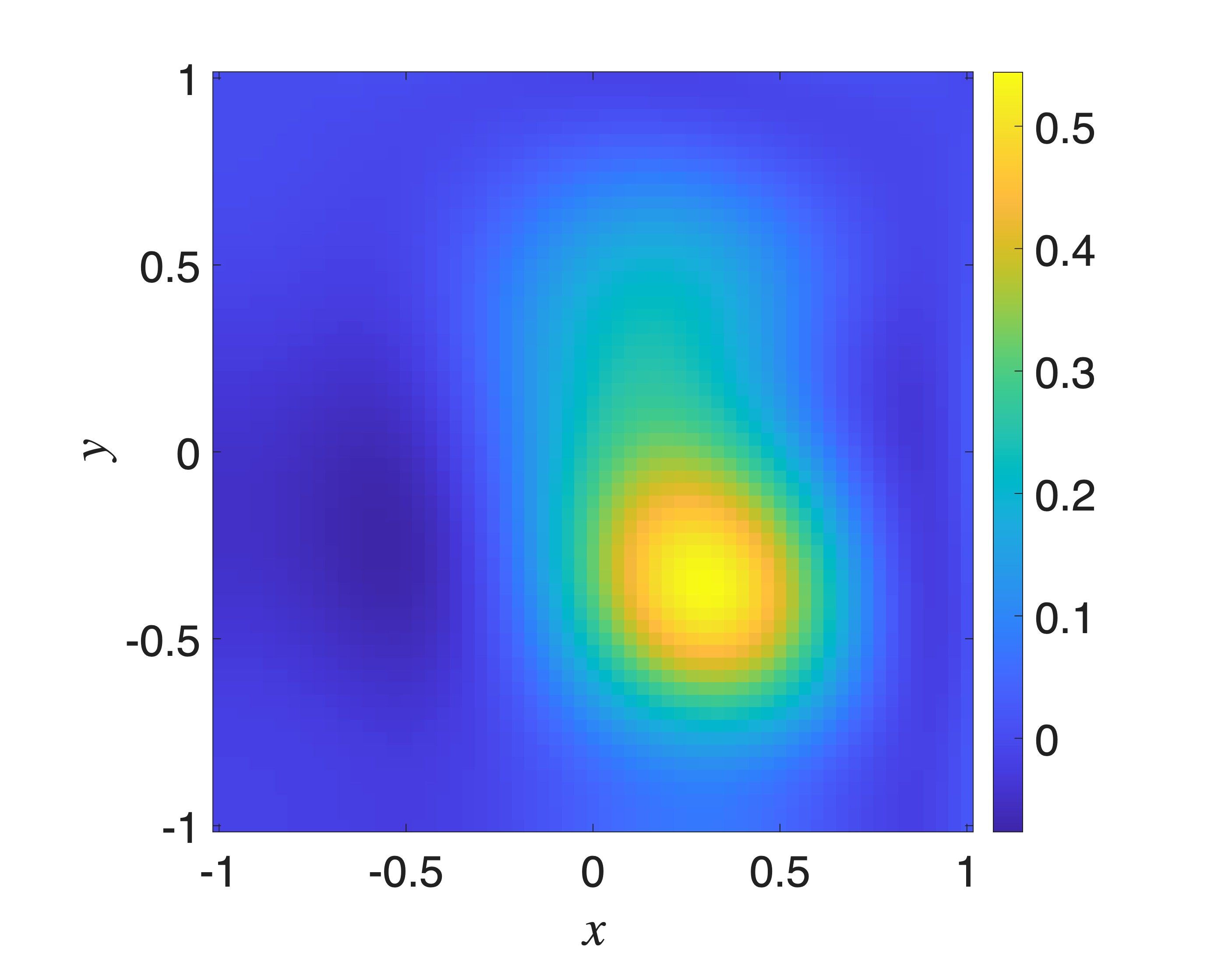}}
\hfill
\subfloat[$\lambda=2$]{%
  \includegraphics[width=0.32\textwidth]{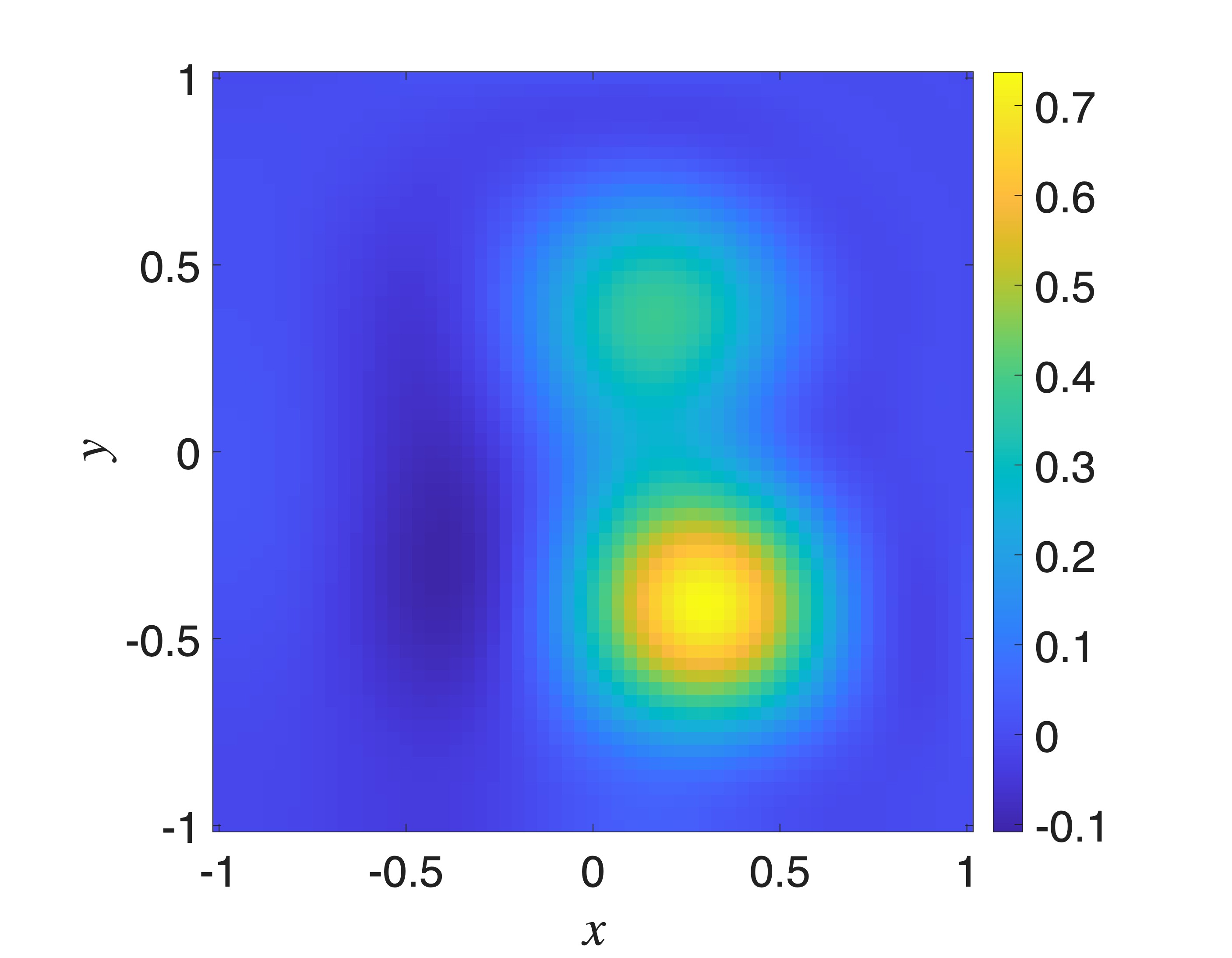}}

\vspace{2pt}
\subfloat[$\lambda=3$]{%
  \includegraphics[width=0.32\textwidth]{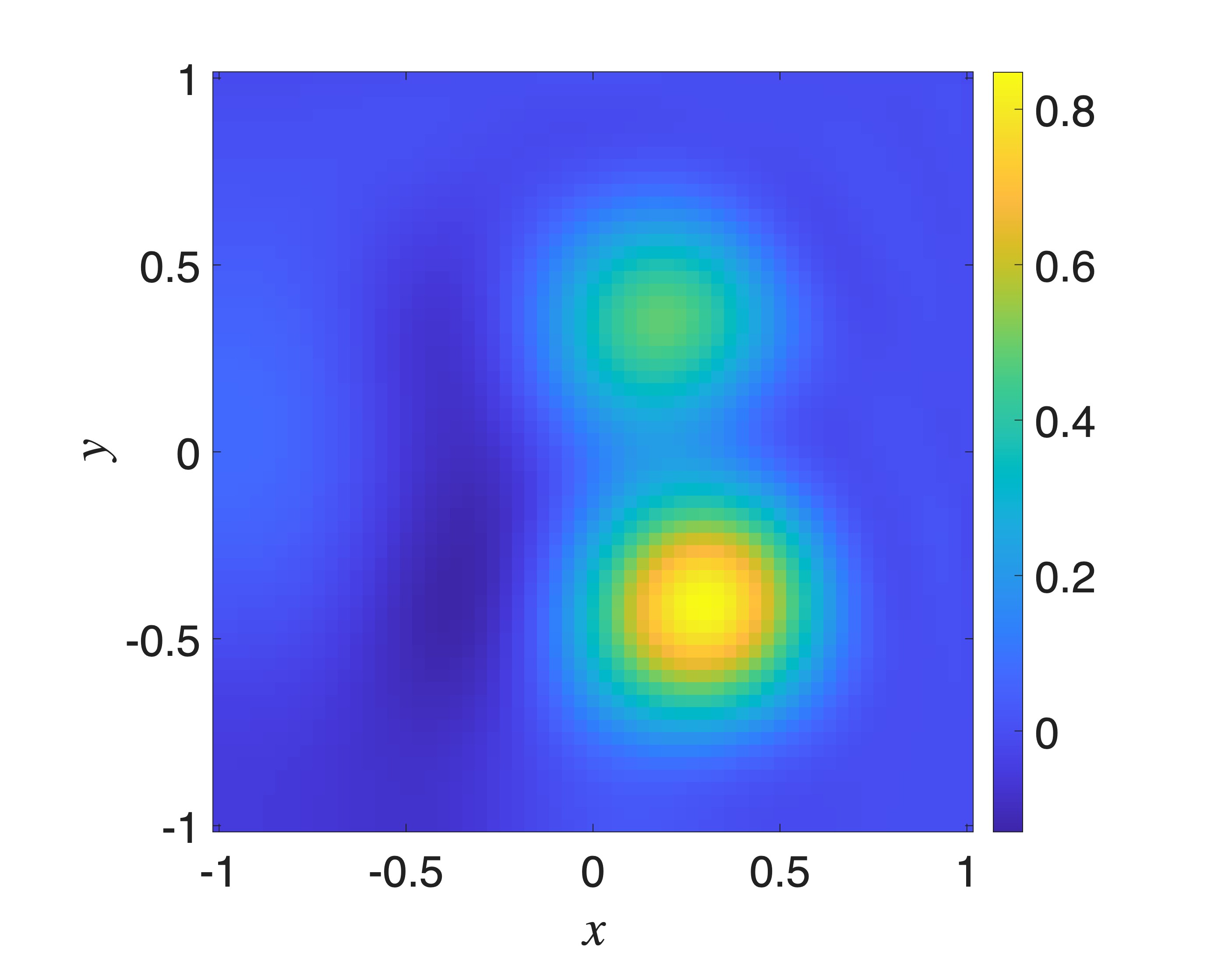}}
\hfill
\subfloat[$\lambda=4$]{%
  \includegraphics[width=0.32\textwidth]{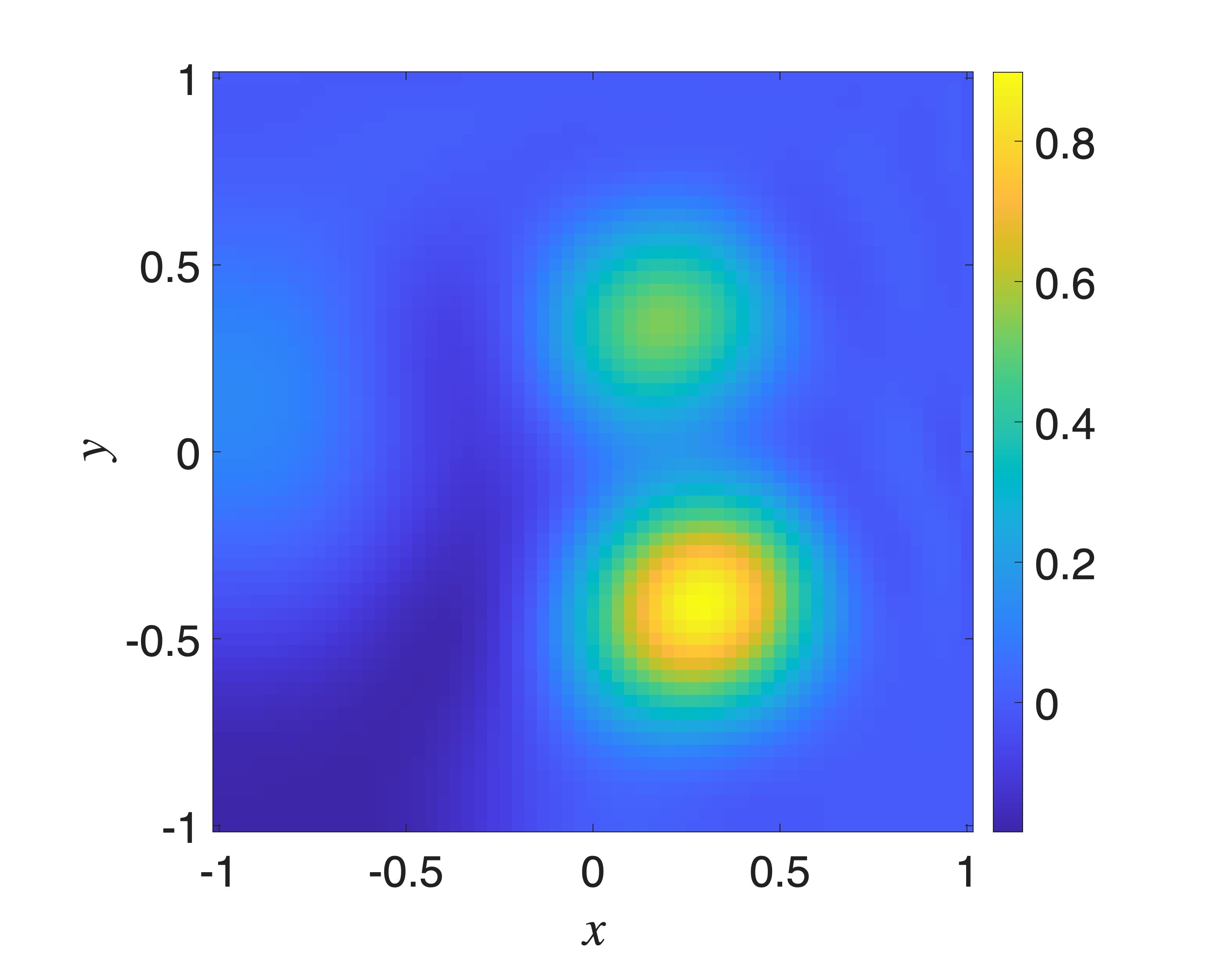}}
\hfill
\subfloat[$\lambda=7$]{%
  \includegraphics[width=0.32\textwidth]{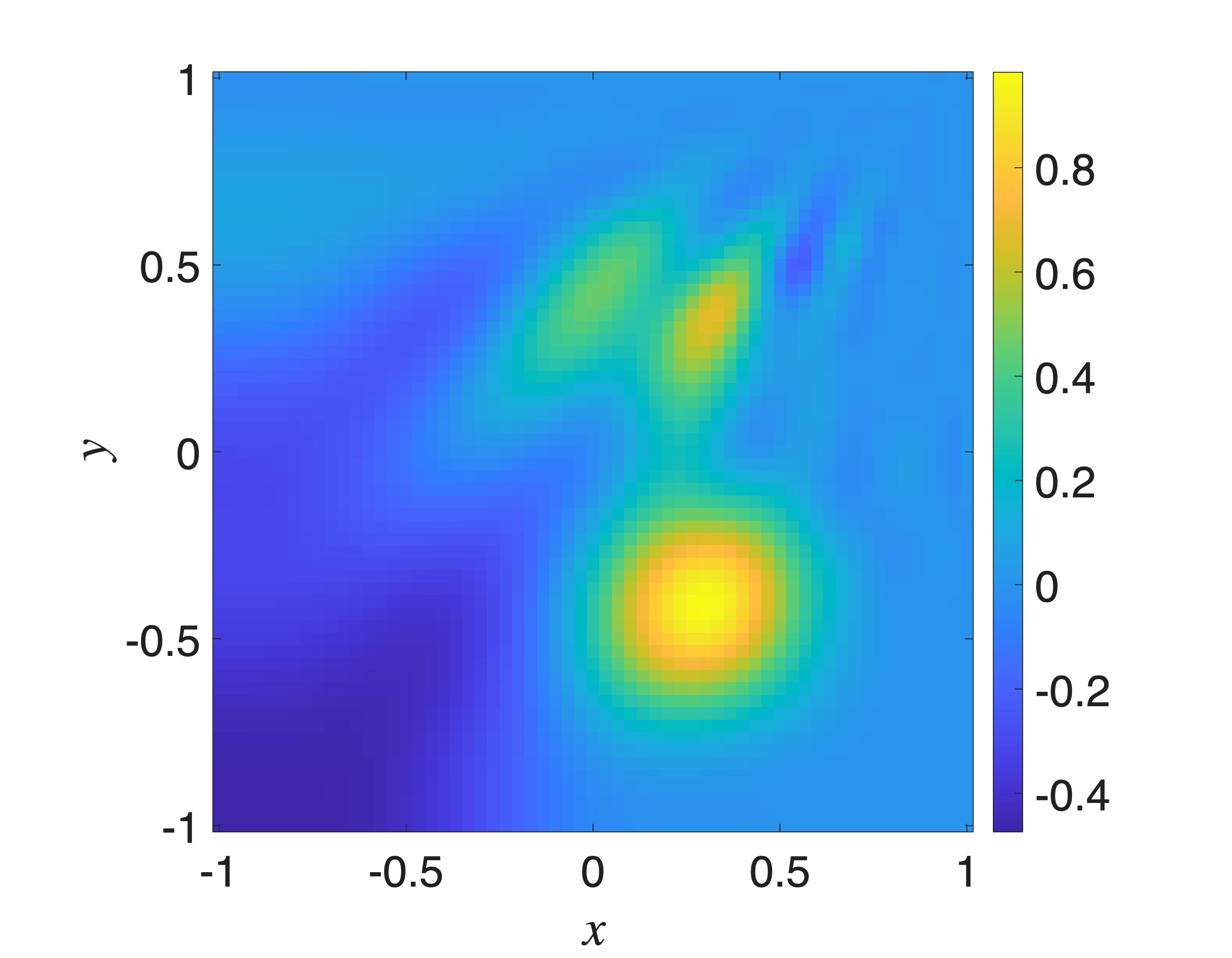}}
\caption{Reconstructions of the two-Gaussian initial state from $5\%$
noisy data for several values of the Carleman parameter $\lambda$.}
\label{fig:lambda-study}
\end{figure}

The reconstruction obtained with $\lambda=0$, for which no Carleman
weight is used, is not satisfactory. In particular, the two inclusions
are not recovered as clearly as in the reconstructions obtained with
positive, moderate values of $\lambda$. The results for
$\lambda=1,2,3,$ and $4$ show that introducing the Carleman weight
improves the localization and separation of the two inclusions. This
comparison illustrates the practical contribution of the Carleman
weight, beyond merely using an ordinary unweighted Picard iteration.

On the other hand, increasing $\lambda$ indefinitely is not beneficial
in numerical computation. The reconstruction for $\lambda=7$
deteriorates because of numerical conditioning; this does not
contradict the continuous Carleman estimate. In the present experiment,
$\varphi_*(x,y)=x+0.5y$ ranges from $-1.5$ to $1.5$ on $\Omega$.
Consequently, when $\lambda=7$, the functional weight
$e^{2\lambda\varphi_*}$ ranges from $e^{-21}$ to $e^{21}$, whose ratio
is $e^{42}$, approximately $1.7\times10^{18}$. This enormous dynamic
range makes the discrete weighted least-squares system severely
ill-conditioned and amplifies finite-precision and discretization
errors. Therefore, although the theory requires $\lambda$ to be
sufficiently large to obtain the contraction property, $\lambda$ must
remain within a moderate computational range. Excessively large values
should be avoided in practice.

\section{Conclusion}
\label{sec:conclusion}
We have developed a Carleman--Picard method for reconstructing the
initial state of a nonlinear transport equation with memory from
outflow boundary observations. The temporal variable was eliminated by
expanding the solution in a Legendre--exponential basis and truncating
the expansion, which led to a finite coupled nonlinear transport system
for the spatial modal coefficients. A Carleman estimate for $H\cdot\nabla$ was used to prove that the
resulting Picard map is contractive for a sufficiently large Carleman
parameter.

Within the truncated and regularized setting, the Carleman-weighted
minimization problem at every iteration has a unique solution. For a
sufficiently large Carleman parameter, the induced Picard map is a
strict contraction on the admissible ball, with contraction factor of
order $\lambda^{-1/2}$. This gives convergence from any initialization
in that ball, rather than only from an initial guess close to the exact
reduced solution. The noisy-data analysis also provides a weighted
stability estimate consisting of a boundary-data error term and a
Tikhonov regularization term.

The numerical experiments support the analytical construction. The
method recovered a disk, two separated Gaussian inclusions, and a
Y-shaped inclusion from multiplicatively perturbed outflow data. The
principal locations and geometries remained visible at both tested
noise levels, and the relative changes between consecutive iterates
typically decreased rapidly. The Carleman-parameter study also
highlighted an important computational balance. The unweighted Picard
scheme produced an unsatisfactory reconstruction, while moderate
positive values of $\lambda$ improved the result. On the other hand,
an excessively large value generated a very large dynamic range in the
exponential weight and severely degraded the conditioning of the
discrete least-squares system.

The theoretical conclusions of this work concern the truncated and
regularized reduced problem for fixed values of the truncation index
and the Tikhonov regularization parameter. The temporal truncation is
not merely a numerical approximation; it also serves as an additional
layer of regularization by suppressing high-order temporal modes that
may amplify noise and instability. The Tikhonov term provides a second
regularization mechanism for the resulting spatial system. Accordingly,
the convergence and stability results established here apply to this
combined truncation--Tikhonov framework.

\section*{CRediT authorship contribution statement}

\textbf{Navaraj Neupane:} Conceptualization, Methodology, Formal analysis, Software, Validation, Visualization, Writing--original draft, Writing--review and editing.

\textbf{Loc Nguyen:} Conceptualization, Methodology, Formal analysis, Software, Validation, Visualization, Writing--original draft, Writing--review and editing.

Both authors read and approved the final manuscript.

\section*{Funding}

This research did not receive any specific grant from funding agencies in the public, commercial, or not-for-profit sectors.

\section*{Declaration of competing interest}

The authors declare that they have no known competing financial interests or personal relationships that could have appeared to influence the work reported in this paper.

\section*{Data availability}

No external datasets were used in this study. The MATLAB code used to generate the numerical results in this paper is available at \url{https://doi.org/10.5281/zenodo.21865108} on Zenodo. Running the provided code reproduces the synthetic outflow boundary data and the reconstruction results reported in the manuscript.

\bibliographystyle{plain}
\bibliography{Transport}
\end{document}